\newcommand{\reals}{\ensuremath{\mathbb{R}}}
\newcommand{\norm}[1]{\ensuremath{\left\vert\left\vert#1\right\vert\right\vert}}
\newcommand{\pdiff}[1]{\ensuremath{\frac{\partial}{\partial #1}}}
\newcommand{\vbar}{\ensuremath{\biggr\vert}}
\newcommand{\flow}{\ensuremath{\mathcal{F}}}
\newcommand{\T}[1]{\ensuremath{T_\partial #1}}
\newcommand{\uT}[1]{\ensuremath{T_1 #1}}
\newcommand{\sT}{\ensuremath{\mathcal{T}}}
\newcommand{\wsT}{\ensuremath{\widetilde{\mathcal{T}}}}
\newcommand{\comment}[1]{}
\newtheorem{theorem}{Theorem}
\newtheorem{lemma}[theorem]{Lemma}
\newtheorem{proposition}[theorem]{Proposition}
\newtheorem{corollary}{Corollary}[theorem]
\theoremstyle{definition}
\newtheorem{example}{Example}
\newtheorem*{remark}{Remark}
\begin{document}
\date{5 December 2022}
\title{Recovering Obstacles from their Travelling Times}
\author{T. Gurfinkel}
\email{tal.gurfinkel@research.uwa.edu.au}
\author{L. Noakes}
\email{lyle.noakes@uwa.edu.au}
\author{L. Stoyanov}
\email{luchezar.stoyanov@uwa.edu.au}
\affiliation{ 
University of Western Australia, Crawley, 6009 WA, Australia
}
	\begin{abstract}
		\cite{math9192434} introduced a method of recovering strictly convex planar obstacles from their set of travelling times. We provide an extension of this construction for obstacles on Riemannian surfaces under some general curvature conditions. It is required that no smooth geodesic intersect more than two obstacles.
	\end{abstract}
	\maketitle
	\begin{quotation}
	Consider a strictly convex obstacle $K$ on a Riemannian surface $M$. $K$ may be a disjoint union of finitely many strictly convex submanifolds of $M$ of dimension 2 with smooth boundary. The set of travelling times of $K$ are determined via scattering of geodesics in $M$, reflecting on $\partial K$, which emanate from an arbitrary strictly convex smooth curve $\partial S$, which bounds $K$ in $M$. These geodesics may approximate light, pressure or other kinds of waves in a uniform medium, which reflect elastically on the body $K$. The lengths of those geodesics beginning and ending in $\partial S$ are called their travelling times, and they form the data which we are given about $K$. In this paper we give a constructive method to recover $K$ from its set of travelling times. We do so by constructing envelopes of smooth geodesics tangent to $K$, which can be found by closely inspecting the set of travelling times. The components of $K$ have to be arranged such that no smooth geodesic intersects more than two of the components. Otherwise ensuring the uniqueness of the recovered obstacles becomes prohibitively difficult, as determining which component of $K$ the constructed envelope belongs to is not possible in general.
 	\end{quotation}
 	\section{Introduction}
 	For the past 50 years, chaotic mathematical billiards have been studied extensively as an intriguing formulation of physical processes involving hard-ball elastic collisions. These are idealised systems involving a point particle moving at constant speed within a container, which reflects upon contact with the boundary of the container, according to the natural law ``angle of reflection is equal to angle of incidence''. The most famous example is Sinai's dispersing billiards on the torus, defined in his seminal paper (cf. \cite{MR0274721}). Since then there have been numerous to the shape of the containers of these billiards, which alter the dynamics wildly while still exhibiting similar chaotic properties, as outlined by \cite{CHERNOVCHAOTICBILLIARDS}. Generally these involve strictly convex boundaries which are the crucial element in inducing the chaotic behaviour of the dynamics. More recently variations on the equations of motion of the particle have also been considered, via introducing curvature to the space in which the particle moves, or in some cases introducing a magnetic field orthogonal to the space, acting on the particle (\cite{MR1382763,PhysRevE.67.065202}). In this paper we consider an inverse problem in mathematical billiards. Suppose, for example, we are tasked with extracting an ore body from a uniform medium, while minimising the amount of excess matter taken along with the valuable ore body. We may set off a series of small charges in an boundary around the ore body, recording the time the pressure waves from each explosion take to return to the boundary, as well as the location. Using this data we aim to recover the shape of the ore body, including the number of components it is made of, as well as their sizes and distances separating them from one another. We formulate this problem mathematically, approximating the pressure waves as point particles, and show that under certain conditions one can recover the ore body (or more generally obstacle) exactly.
 	 	
	A more precise formulation of the problem is as follows. Let $M$ be a geodesically complete, 2-dimensional Riemannian manifold with injectivity radius $\rho > 0$. We say that a 2-dimensional submanifold $W$ of $M$ is strictly convex in $M$ if the following two conditions are satisfied:
	\begin{itemize}
		\item Given any two points $p,q\in W$, the smooth geodesic $\gamma$ from $p$ to $q$ is contained entirely in $W$.
		\item The curvature of the boundary $\partial W$ is positive.
	\end{itemize}
	Let $S$ be a 2-dimensional, strictly convex, compact submanifold of $M$ with smooth boundary and diameter smaller than $\rho$. This implies that between any two points in $S$ there is a unique smooth geodesic in $S$.
	Suppose $K = K_1 \cup \dots\cup K_n$ is a union of $n\geq 2$\footnote{For the case where $n=2$ there is a simpler procedure for reconstructing the obstacles given by \cite{MR3351977}.} disjoint 2-dimensional, strictly convex submanifolds $K_i$ of $M$ with smooth boundary, contained within $S$. Denote $S_K = \overline{S\backslash K}$. 
	Let $\kappa_S$ and $\kappa_K$ be the maximal and minimal (respectively) sectional curvatures of $S$ and $\partial K$ respectively. Suppose that either $\kappa_S<0$ or,
	\[ \kappa_S>0,\quad \rho\sqrt{\kappa_S}<\frac{\pi}{2},\text{ and }\sqrt{\kappa_S}\tan(\rho\kappa_S)<\kappa_K.\]
	These conditions ensure that convex fronts will remain convex after propagation via the billiard flow, see \cite{MR807598,MR1022522}.
	We consider (generalised) geodesics to be piecewise-smooth, constant speed curves in $S_K$ which reflect on $K$ according to the usual reflection law, and such that any smooth arc of the curve is a Riemannian geodesic.  Namely, a Riemannian geodesic is a critical point of the energy functional $E$, where $\gamma$ does not intersect $\text{Int } K$. \[E(\gamma) = \int_a^b \frac{1}{2}\norm{\dot\gamma(t)}^2\ dt.\]
	This defines the billiard dynamical system on our curved billiard table $S$. We denote the set of travelling times for $K$ by $\sT$. That is, the set of all triples $(x,y,t)$ such that $x,y\in\partial S$ and $t$ is the length of some geodesic $\gamma$ between $x$ and $y$.
	
	\begin{figure*}
	\center
		\subcaptionbox{3 obstacles in general position.\label{fig:yes_general_position}}{
		\begin{tikzpicture}[rotate=0]
			\tikzmath{
			    \r1 = 2cm;
			    \d1 = 4cm; 
			    coordinate \c;
			    \c1 = (2,0);
			    \c2 = (-8,4);
			    \c3 = (-2,7);
			}
			
			\node[circle,draw] (c1) at (\c1) [minimum size = \d1]{$K_l$};
			\coordinate (c1a) at ($(\c1)+(2,0)$);
			\coordinate (c1b) at ($(\c1)+(-2,0)$);
			
			\node[circle,draw] (c2) at (\c2) [minimum size = \d1]{$K_i$};
			\coordinate (c2a) at ($(\c2)+(-2,0)$);
			\coordinate (c2b) at ($(\c2)+(2,0)$);
			
			\node[circle,draw] (c3) at (\c3) [minimum size = \d1]{$K_j$};
			\coordinate (c2a) at ($(\c2)+(-2,0)$);
			\coordinate (c2b) at ($(\c2)+(2,0)$);
			
			\coordinate (a) at (4,2);
			\coordinate (b) at (-10,2);
			\coordinate (ta1) at (tangent cs:node=c1,point={(a)},solution=1);
			\coordinate (ta2) at (tangent cs:node=c1,point={(a)},solution=2);
			\coordinate (tb1) at (tangent cs:node=c1,point={(b)},solution=1);
			\coordinate (tb2) at (tangent cs:node=c1,point={(b)},solution=2);
			\coordinate (sa1) at (tangent cs:node=c2,point={(a)},solution=1);
			\coordinate (sa2) at (tangent cs:node=c2,point={(a)},solution=2);
			\coordinate (sb1) at (tangent cs:node=c2,point={(b)},solution=1);
			\coordinate (sb2) at (tangent cs:node=c2,point={(b)},solution=2);
			
			\coordinate (i1) at ($(c2) + (68:2)$);
			\coordinate (til1) at (tangent cs:node=c1,point={(i1)},solution=2);
			\draw[dashed] (i1) -- (til1);
			
			\coordinate (i2) at ($(c2) + (-112:2)$);
			\coordinate (til2) at (tangent cs:node=c1,point={(i2)},solution=1);
			\draw[dashed] (i2) -- (til2);
			
			\coordinate (i3) at ($(c2) + (118:2)$);
			\coordinate (tij1) at (tangent cs:node=c3,point={(i3)},solution=2);
			\draw[dash dot] (i3) -- (tij1);
			
			\coordinate (i4) at ($(c2) + (-65:2)$);
			\coordinate (tij2) at (tangent cs:node=c3,point={(i4)},solution=1);
			\draw[dash dot] (i4) -- (tij2);
			
			\coordinate (j1) at ($(c3) + (29:2)$);
			\coordinate (tjl1) at (tangent cs:node=c1,point={(j1)},solution=2);
			\draw[dotted] (j1) -- (tjl1);
			
			\coordinate (j2) at ($(c3) + (-150:2)$);
			\coordinate (tjl2) at (tangent cs:node=c1,point={(j2)},solution=1);
			\draw[dotted] (j2) -- (tjl2);
			
		\end{tikzpicture}
		}
		
		\subcaptionbox{3 obstacles not in general position.\label{fig:not_general_position}}{
		\begin{tikzpicture}[rotate=0]
			
			\tikzmath{
			    \r1 = 2cm;
			    \d1 = 4cm; 
			    coordinate \c;
			    \c1 = (2,0);
			    \c2 = (-8,4);
			    \c3 = (-2,4);
			}
			
			\node[circle,draw] (c1) at (\c1) [minimum size = \d1]{$K_l$};
			\coordinate (c1a) at ($(\c1)+(2,0)$);
			\coordinate (c1b) at ($(\c1)+(-2,0)$);
			
			\node[circle,draw] (c2) at (\c2) [minimum size = \d1]{$K_i$};
			\coordinate (c2a) at ($(\c2)+(-2,0)$);
			\coordinate (c2b) at ($(\c2)+(2,0)$);
			
			\node[circle,draw] (c3) at (\c3) [minimum size = \d1]{$K_j$};
			\coordinate (c2a) at ($(\c2)+(-2,0)$);
			\coordinate (c2b) at ($(\c2)+(2,0)$);
			
			\coordinate (a) at (4,2);
			\coordinate (b) at (-10,2);
			\coordinate (ta1) at (tangent cs:node=c1,point={(a)},solution=1);
			\coordinate (ta2) at (tangent cs:node=c1,point={(a)},solution=2);
			\coordinate (tb1) at (tangent cs:node=c1,point={(b)},solution=1);
			\coordinate (tb2) at (tangent cs:node=c1,point={(b)},solution=2);
			\coordinate (sa1) at (tangent cs:node=c2,point={(a)},solution=1);
			\coordinate (sa2) at (tangent cs:node=c2,point={(a)},solution=2);
			\coordinate (sb1) at (tangent cs:node=c2,point={(b)},solution=1);
			\coordinate (sb2) at (tangent cs:node=c2,point={(b)},solution=2);
			
			\coordinate (i1) at ($(c2) + (68:2)$);
			\coordinate (til1) at (tangent cs:node=c1,point={(i1)},solution=2);
			\draw[dashed] (i1) -- (til1);
			
			\coordinate (i2) at ($(c2) + (-112:2)$);
			\coordinate (til2) at (tangent cs:node=c1,point={(i2)},solution=1);
			\draw[dashed] (i2) -- (til2);

			
		\end{tikzpicture}
		}
		
		\caption{Qualitatively, general position ensures that any two obstacles cannot 'obscure' a third obstacle.}\label{fig:general_position}
	\end{figure*}
	
	In our curved billiard setting we consider the inverse problem of recovering $K$ from its set of travelling times, as given by the billiard flow.
	The travelling times uniquely determine the obstacle $K$ when $M$ is Euclidean space of dimension 3 or larger, as shown by \cite{MR3359579}. Moreover, one is able to recover the volume of $K$ using a generalisation of Santalo's formula (cf. \cite{STOYANOV20172991}) when $M$ is a Riemannian manifold of any dimension at least 2. Recently an approach by \cite{MR4449325} using the so called layered-scattering technique allows one to determine the volume and curvature of obstacles with codimension 2 or greater.
	However, none of these approaches allows us to constructively recover the obstacle $K$ from the travelling times $\sT$. A novel constructive method of recovering obstacles in Euclidean 2-space was given in \cite{math9192434}, provided the the connected components $K_i$ are in \emph{general position} (\Cref{fig:general_position}). That is, no smooth geodesic in $M$ will intersect more than two of the components $K_i$.
	We note that the general position condition is equivalent to requiring that Ikawa's no-eclipse condition is satisfied (cf. \cite{MR949013}). The most clear and immediate consequence of this condition is the following fact:

	\begin{remark}
		If $\gamma$ is a geodesic tangent to $K$ at some point $\gamma(t^*)$, then $\gamma(t^*)$ is either the first or last reflection point of $\gamma$. To show that this holds, consider the case where $\gamma(t^*)$ is not the first or last reflection point of $\gamma$ (\Cref{fig:impossible_gen_pos}). Let $\gamma(t^*_{-1})$ and $\gamma(t^*_{+1})$ be the points of reflection before and after $\gamma(t^*)$. Then the segment $\gamma|_{[t^*_{-1},t^*_{+1}]}$ is a smooth geodesic which intersects three obstacles, a contradiction to the general position condition.
	\end{remark}
	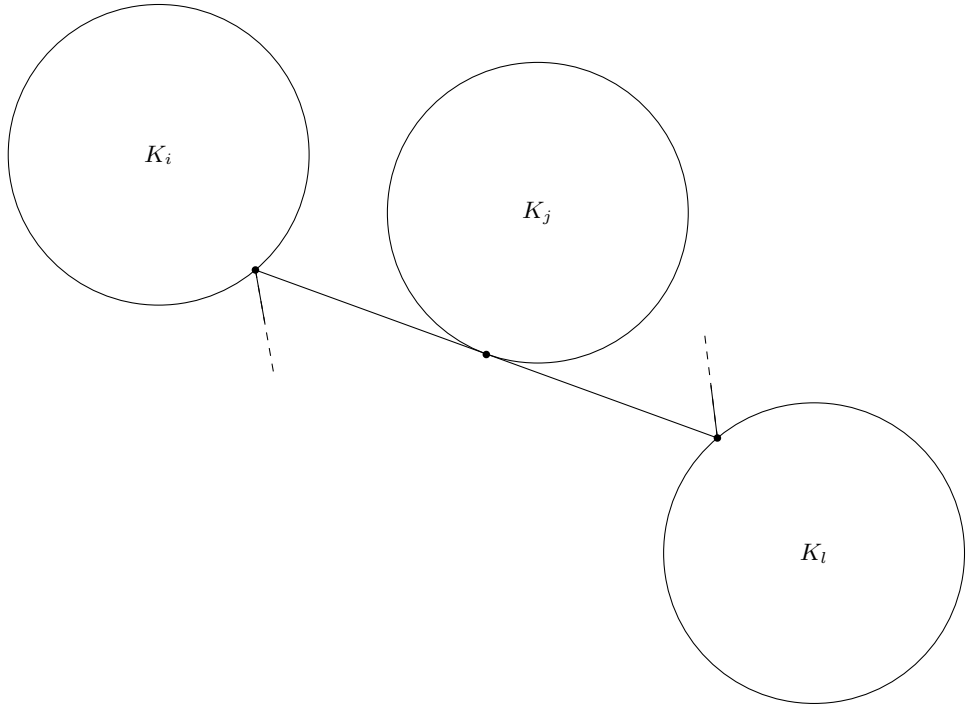
\begin{figure*}
		\begin{tikzpicture}[rotate=-20]
			\tikzmath{
			    \r1 = 2cm;
			    \d1 = 4cm; 
			    coordinate \c;
			    \c1 = (2,0);
			    \c2 = (-8,2);
			    \c3 = (-3,3);
			}
			
			\node[circle,draw, name path = c1] (c1) at (\c1) [minimum size = \d1]{$K_l$};
			\coordinate (c1a) at ($(\c1)+(2,0)$);
			\coordinate (c1b) at ($(\c1)+(-2,0)$);
			
			\node[circle,draw, name path = c2] (c2) at (\c2) [minimum size = \d1]{$K_i$};
			\coordinate (c2a) at ($(\c2)+(-2,0)$);
			\coordinate (c2b) at ($(\c2)+(2,0)$);
			
			\node[circle,draw, name path = c3] (c3) at (\c3) [minimum size = \d1]{$K_j$};
			\coordinate (c2a) at ($(\c2)+(-2,0)$);
			\coordinate (c2b) at ($(\c2)+(2,0)$);
			
			\coordinate (a) at (4,1);
			\coordinate (b) at (-10,1);
			\path[name path=ab] (a) -- (b); 
			\path[name intersections={of=ab and c1,by={sa1,sa2}}];
			\path[name intersections={of=ab and c2,by={sb1,sb2}}];
			\coordinate (ta1) at (tangent cs:node=c3,point={(a)},solution=2);
			\fill (sb2) circle (0.05cm);
			\fill (sa2) circle (0.05cm);
			\fill (ta1) circle (0.05cm);
			
			\draw[dashed, rotate=72] (sa2) -- ($(sa2) + (1,1)$);
			\draw[dashed, rotate=-105] (sb2) -- ($(sb2) + (1,1)$);
			\draw[rotate=72] (sa2) -- ($(sa2) + (0.5,0.5)$);
			\draw[rotate=-105] (sb2) -- ($(sb2) + (0.5,0.5)$);
			\draw (sb2) -- (sa2);
			
		\end{tikzpicture}
		\caption{A geodesic tangent to $K$ with points of reflections both before and after the point of tangency.\label{fig:impossible_gen_pos}}
	\end{figure*}
	In this paper we extend the results of \cite{math9192434} and outline how to reconstruct an obstacle in general position in the more general setting of a curved billiard table. We do so via constructing envelopes of smooth geodesics tangent to the obstacle. Relying on the aforementioned fact in the remark above, we can distinguish which component the geodesics are tangent to. This allows us to piece together the envelopes, ensuring we are correctly reconstructing the obstacle.
	
	Given a point $q=(x,\omega)\in\uT{S_k}$, let $\gamma_q$ be the geodesic uniquely determined by $q$. We say that $\gamma_q$ is \emph{non-trapped} if there exists two distinct times $t_0,t_0'\in\reals$ such that $\gamma_q(t_0),\gamma_q(t_0')\in\partial S$. Otherwise we say that $\gamma_q$ is \emph{trapped}. We denote the set of all points $q\in\uT{S_K}$ such that $\gamma_q$ is trapped by $Trap(S_K)$, called the trapping set of $K$. We cannot hope to recover an obstacle $K$  where the trapping set has nonzero measure, since by definition there will be an open set of points on the boundary of $K$ which cannot be detected in the set of travelling times of $K$.
	It should be noted that the convexity condition on the obstacle $K$ is sufficient to ensure that each connected component has an empty trapping set.
	Obstacles with non-empty trapping sets are known to exist in Euclidean billiard tables of dimension $m\geq 2$. We refer the reader to see \cite{MR3514729} for explicit constructions of such obstacles. One could reasonably conjecture that such trapping obstacles will also exist in a Riemannian manifold. In \Cref{example:trapping-obstacle} we construct a trapping obstacle in a 2-dimensional Riemannian manifold. Since the trapping set of an obstacle is known to be stable under $C^k$ ($k\geq 3$) perturbations \cite{STOYANOV20172991}, one can smoothly perturb one of the obstacles constructed in \cite{MR3514729} to create a trapping obstacle in the Riemannian manifold derived from perturbing the metric in the same manner. There are no known constructions of trapping obstacles for a given Riemannian metric in dimensions higher than 2.
	
	\begin{example}\label{example:trapping-obstacle}
		Suppose $M$ is a Riemannian manifold of dimension 2 with injectivity radius $\rho > 0$. Pick two points $F_1,F_2\in M$ such that $d_g(F_1,F_2)<\rho$. Define the following ellipse-like curve:
	\[
		E = \{x\in M : d_g(x,F_1) + d_g(x,F_2) = r \},
	\]
	where $r\in\reals$ is a constant such that $d_g(F_1,F_2)<r<\rho$. We will call such curves Riemannian ellipses. Note that such curves are in fact strictly convex. Consider the gradient of $d_g(x,F_i),$ $i=1,2:$
	\[
		grad_x\ d_g(x,F_i) = -\dot\gamma_{(x,F_i)}(0),
	\]
	where $\gamma_{(x,F_i)}$ is the geodesic from $x$ to the focus $F_i$. Now we vary $x$ along $E$. Since every point in $E$ satisfies the equation
	\[
		d_g(x_h,F_1) + d_g(x_h,F_2) = r,
	\]
	we may take the derivative to find the following relation:
	\[
		\langle x_h'(0),\dot\gamma_{(x,F_1)}(0) + \dot\gamma_{(x,F_2)}(0)\rangle = 0.
	\]
	Thus, any billiard ray crossing through one of the foci must also cross through the other  (\Cref{fig:ellipce_foci_reflection}). To construct a trapping obstacle from $E$, we adapt Livshits' classical construction using a Euclidean ellipse. Consider the unique smooth geodesic $\gamma$ from $F_1$ to $F_2$. Extending this geodesic from $F_1$, denote the first (possibly only) intersection of $\gamma$ with $E$ by $A_1$. Similarly extending $\gamma$ from $F_2$ we find the first intersection with $E$ by $A_2$. 
	Now consider any ray $\eta$ which crosses $\gamma$ between $F_1$ and $F_2$, let $x\in E$ be the point of reflection of $\eta$ on $E$. Consider the billiard ray $\beta$ from $F_1$ to $x$. By our previous argument, it follows that $\beta$ must cross through $F_2$ after reflecting at $x$. Taking a normal coordinate chart about $x$, large enough to contain $E$, we see that $\beta$ is composed of two straight lines, from $F_1$ to $x$ to $F_2$, and $\eta$ is a straight line from $x$ intersecting $\gamma$ between the foci. Since the angle of reflection of $\eta$ at $x$ is smaller than the angle of reflection of $\beta$ it follows that the reflected ray $\eta'$ of $\eta$ will also fall between the two straight lines of $\beta$. Now since the exponential map is injective within our chart, $\eta'$ cannot intersect $\beta$ and hence must intersect $\gamma$ between the two foci $F_1$ and $F_2$. 
	
	Since it is homeomorphic to $\mathbb{S}^1$, the Riemannian ellipse $E$ can be split into two curves $l_1,l_2$, from $A_1$ to $A_2$. Extend $l_1$ (\Cref{fig:trapping_obstacle_poincare}) from $A_1$ to $A_2$ to create a closed curve homeomorphic to $\mathbb{S}^1$, such that:
	\begin{itemize}
		\item $l_1$ does not intersect $\gamma$ between $A_1$ and $A_2$ except at $F_1$ and $F_2$, where it is tangent to $\gamma$
		\item $l_1$ does not obstruct rays from reaching $\gamma$ between $F_1$ and $F_2$.
	\end{itemize}
	  It follows that $l_1$ has a trapping set of positive measure, since any ray with a reflection point on $l_1$ between $A_1$ and $F_1$ must then reflect between $A_2$ and $F_2$, and vice versa by our argument above.
	\begin{figure}[h]
		\center
		\includegraphics[width=0.9\linewidth,trim={20px 40px 50px 90px},clip]{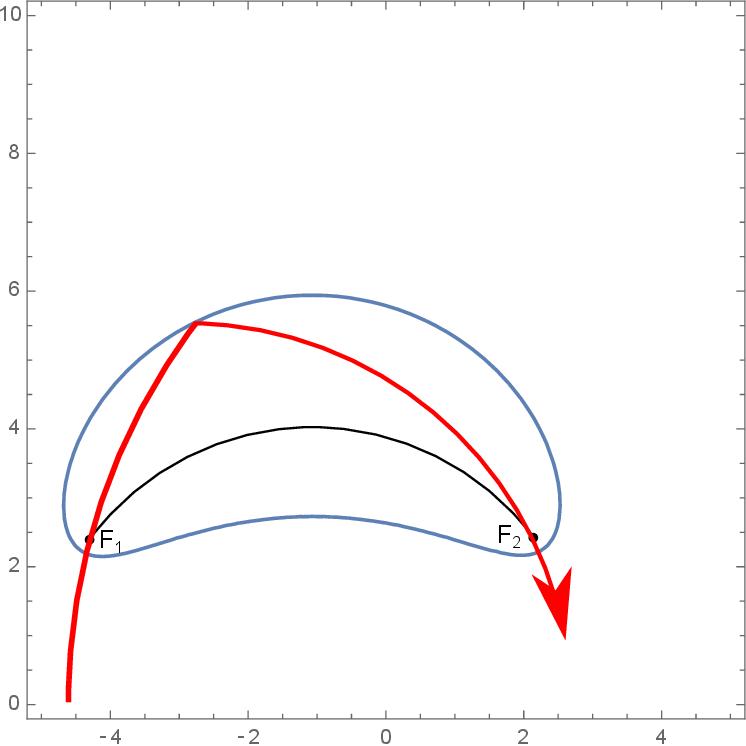}
		\caption{A billiard ray through the foci of a Riemannian $E$ in the Poincare half-plane.}
		\label{fig:ellipce_foci_reflection}
	\end{figure}
	\begin{figure}[h]
		\center
		\includegraphics[width=0.9\linewidth,trim={0px 0px 0px 0px},clip]{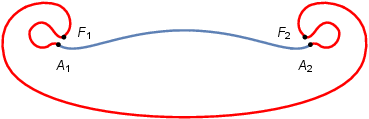}
		\caption{Half of $E$ (\Cref{fig:ellipce_foci_reflection}, in blue) extended (in red) to form a trapping obstacle.}
		\label{fig:trapping_obstacle_poincare}
	\end{figure}
	\qed
	\end{example}
	\section{Recovering the number of components in $K$}
	Define the following submanifolds of the unit tangent bundles, $\uT{S}$ and $\uT{S_K}$, of $S$ and $S_K$ respectively: \begin{equation}
			\T{S} = \{(x,\omega)\in \uT{S}:x\in \partial S\}
		\end{equation}\begin{equation}
			\T{S_K} = \{(x,\omega)\in \uT{S_K}:x\in \partial S_K\}
		\end{equation}
	Let $\flow_t:\uT{S}\times\reals\to\uT{S}$ be the smooth geodesic flow (see e.g. \cite{GEOFLOW} for a definition).
	\begin{lemma}\label{lemma:singlereflection}
	Suppose that $\gamma$ is a smooth geodesic from $x_0\in \partial S_K$ to $y_0\in \partial S_K$, such that $\gamma$ is not tangent to $\partial S_K$ at any point and doesn't intersect $\partial S_K$ at any points except $x_0$ and $y_0$. If $\omega_0$ is the initial direction of $\gamma$ then there are neighbourhoods $W$ of $(x_0,\omega_0)$ in $T_{1}S_K$ and $V$ of $y_0$ in $\partial S_K$, and unique smooth functions $y:W\to V$ and $\tau:W\to\reals^+$ such that $y(x,\omega) = \pi_1\circ\flow_{\tau(x,\omega)}(x,\omega)$ for all $(x,\omega)\in W$.
	\end{lemma}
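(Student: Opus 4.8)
The plan is to obtain $y$ and $\tau$ from a single application of the implicit function theorem, with the non-tangency hypothesis supplying exactly the transversality that makes the theorem applicable. Let $\tau_0>0$ denote the time for which $\pi_1\circ\flow_{\tau_0}(x_0,\omega_0)=y_0$, so that $\flow_{\tau_0}(x_0,\omega_0)=(y_0,\dot\gamma(\tau_0))$. Since $\partial S_K$ is a smoothly embedded curve, I would first choose a smooth \emph{defining function} $\phi$ on a neighbourhood $U$ of $y_0$ in $M$, with $\partial S_K\cap U=\{\phi=0\}$ and $\nabla\phi(y_0)\neq 0$ (for instance a coordinate function in boundary-adapted coordinates, or a signed distance to $\partial S_K$). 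Because $(x_0,\omega_0)$ is a boundary point of $\uT{S_K}$, it is convenient to run the argument on $\uT{M}$, where the geodesic flow $\flow_t$ is a genuine smooth flow, and to restrict the resulting functions to $\uT{S_K}$ at the end.

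Next I would define the smooth map
\[ F(x,\omega,t)=\phi\bigl(\pi_1\circ\flow_t(x,\omega)\bigr), \]
for $(x,\omega)$ near $(x_0,\omega_0)$ and $t$ near $\tau_0$, noting that $F(x_0,\omega_0,\tau_0)=\phi(y_0)=0$. Differentiating in $t$ and using that the velocity of $t\mapsto\pi_1\circ\flow_t(x_0,\omega_0)$ at $t=\tau_0$ is $\dot\gamma(\tau_0)$ gives
\[ \Pdiff{F}{t}(x_0,\omega_0,\tau_0)=\langle\nabla\phi(y_0),\dot\gamma(\tau_0)\rangle. \]
Since $\nabla\phi(y_0)$ is normal to the level set $\partial S_K$ and, by hypothesis, $\gamma$ is not tangent to $\partial S_K$ at $y_0$, the vector $\dot\gamma(\tau_0)$ has a nonzero normal component, so this partial derivative is nonzero. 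This transversality is the crux of the argument and the only place the non-tangency hypothesis is genuinely needed for the local statement.

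The implicit function theorem then yields a neighbourhood $W$ of $(x_0,\omega_0)$ and a unique smooth $\tau:W\to\reals$ with $\tau(x_0,\omega_0)=\tau_0$ and $F(x,\omega,\tau(x,\omega))=0$. Shrinking $W$ keeps $\tau>0$ by continuity, since $\tau_0>0$, so $\tau:W\to\reals^+$. Setting $y(x,\omega)=\pi_1\circ\flow_{\tau(x,\omega)}(x,\omega)$ produces a smooth map whose image lies in $\{\phi=0\}$, i.e. in $\partial S_K$ near $y_0$; after shrinking $W$ further we may assume $y(W)\subseteq V$ for a chosen neighbourhood $V$ of $y_0$ in $\partial S_K$, and finally restrict to $W\cap\uT{S_K}$, a relatively open neighbourhood of $(x_0,\omega_0)$ in $\uT{S_K}$. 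Uniqueness of $(y,\tau)$ follows from the uniqueness clause of the implicit function theorem together with the choice of $V$ small enough to isolate the branch through $(x_0,\omega_0,\tau_0)$.

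The main obstacle is the bookkeeping around the boundary point $x_0\in\partial S_K$ and the verification that $\nabla\phi(y_0)\neq 0$ together with the transversality computation; everything else is a routine application of the implicit function theorem. The remaining global hypotheses along $\gamma$ (non-tangency and non-self-intersection away from the endpoints) are what would guarantee, via a compactness argument, that $\tau$ may additionally be taken to be the \emph{first} return time to $\partial S_K$, ensuring that nearby geodesics do not meet $\partial S_K$ before time $\tau(x,\omega)$.
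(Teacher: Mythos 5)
Your proposal is correct and follows essentially the same route as the paper: a local defining function $\phi$ for $\partial S_K$ near $y_0$, the composition $\phi\circ(\pi_1\circ\flow_t)$, the observation that vanishing of its $t$-derivative is equivalent to tangency of the geodesic to $\partial S_K$, and then the implicit function theorem to produce $\tau$ and hence $y$. Your closing remark about using the global non-tangency and non-intersection hypotheses to make $\tau$ the first return time is a sensible point that the paper leaves implicit.
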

	\begin{proof}
		 Let $\phi$ be a local defining function for $\partial S_K$ defined on a neighbourhood $U$ of $y_0$. That is, $\phi^{-1}(0) = U\cap \partial S_K$. Let \[Y(x,\omega,t) = \pi_1\circ \flow_t(x,\omega),\] restricted to $\T{S_K}\times \reals$, then $Y$ is well defined since $M$ is geodesically complete, and smooth by definition. Finally, let $t_0>0$ be such that $Y(x_0,\omega_0,t_0)=y_0$ and note that $\phi(y_0) = 0$. Consider the composition $\phi\circ Y$ and its derivative with respect to $t$ at $t_0$. Suppose that we fix $x'$, $\omega'$ and $t'$ such that ${\phi\circ Y(x',\omega',t') = 0}$ and 
	\[\pdiff{t}(\phi\circ Y(x',\omega',t))\vbar_{t=t'}=0.\]
	Which implies that
	\[d\phi_{Y(x',\omega',t')}(d_tY(x',\omega',t')) = 0.\]
	Then $d_tY$ at $(x',\omega',t')$ is in the kernel of $d\phi_{Y(x',\omega',t')}$. But \[\ker d\phi_{Y(x',\omega',t')} = T_{Y(x',\omega',t')}\partial S_K.\] So the geodesic $Y(x',\omega', t)$ is tangent to $\partial S_K$ at $Y(x',\omega',t')$. By assumption, $Y(x_0,\omega_0,t)$ is nowhere tangent to $\partial S_K$. So 
	\[\pdiff{t}(\phi\circ Y(x_0,\omega_0,t))\vbar_{t=t_0}\neq 0.\] 
	Therefore we can apply the implicit function theorem to the function $\phi\circ Y$ at the point $y_0$ to find a unique function $\tau$ defined on a neighbourhood $W$ of $(x_0,\omega_0)$ such that $\phi\circ Y(x,\omega,\tau(x,\omega)) = 0$ for all $(x,\omega)\in W$ and $\tau(x_0,\omega_0) = t_0$.
	
	Now $\flow_{\tau(x,\omega)}$ is a diffeomorphism from $W$ onto its image $\widetilde V = \flow_{\tau(x,\omega)}(W)$. Let $V = \pi_1(\widetilde V)$, then $Y$ maps $W$ onto $V$ by definition. Finally, let $y(x,\omega) = Y(x,\omega,\tau(x,\omega))$, then $y:W\to V$ is the desired map.
	\end{proof}
	
	\begin{remark}
		Our assumption that $\text{diam}(S)<\rho$ allows us to find a normal coordinate neighbourhood $U\supset S$ centred around any point $x\in S$. It follows that any two non-reflecting geodesics in $S$ cannot intersect more than once. We use this fact to prove the results.
	\end{remark}

	\begin{lemma}\label{lemma:twocomponents}
		Suppose that $\alpha:[0,1]\to S$ is a smooth geodesic such that $\alpha(0),\alpha(1)\in\partial S$. Then $S\backslash\alpha([0,1])$ has two connected components.
	\end{lemma}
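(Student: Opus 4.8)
The plan is to push the whole picture into a single tangent plane via one normal coordinate chart, where $\alpha$ straightens into a radial segment and $S$ becomes a star-shaped region, and then to separate by the polar angle about the centre of the chart. Centre normal coordinates at $p=\alpha(0)$. By the preceding remark, since $\mathrm{diam}(S)<\rho$ the map $\exp_p$ is a diffeomorphism from a ball in $T_pM$ onto a neighbourhood containing $S$; set $\widetilde S=\exp_p^{-1}(S)$ and $\widetilde\alpha=\exp_p^{-1}\circ\alpha$. A diffeomorphism preserves connected components, so it suffices to count the components of $\widetilde S\setminus\widetilde\alpha$. Two structural facts are then available: geodesics through the centre of a normal chart are radial straight lines, so $\widetilde\alpha$ is a straight segment from the origin $O$ to $Q:=\widetilde\alpha(1)$; and geodesic convexity of $S$ shows that for any $x\in S$ the geodesic from $p$ to $x$ lies in $S$, so its image, the straight segment $[O,\exp_p^{-1}(x)]$, lies in $\widetilde S$. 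Hence $\widetilde S$ is star-shaped with respect to $O$.

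The geometric heart of the argument is the claim that $\widetilde\alpha$ is \emph{exactly} the intersection of $\widetilde S$ with the ray from $O$ through $Q$. Here strict convexity enters: a geodesic issuing from a boundary point into $S$ stays in $\mathrm{Int}\,S$ until it next meets $\partial S$, and it does so exactly once, since positive boundary curvature rules out a tangential return or a second interior crossing. Consequently $\alpha((0,1))\subset\mathrm{Int}\,S$, $\alpha$ meets $\partial S$ only at its two endpoints, and along the ray $\{\theta=0\}$ the star-shaped region $\widetilde S$ reaches out to precisely $Q$ and no further. Thus $\widetilde S\cap\{\theta=0\}=\widetilde\alpha$, so removing $\widetilde\alpha$ removes exactly the points of $\widetilde S$ at polar angle $0$.

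To finish, I would separate by the sign of the polar angle. Because $O\in\partial\widetilde S$ and $\widetilde S$ lies on one side of the boundary tangent line at $O$, every point of $\widetilde S\setminus\{O\}$ has polar angle (measured from the direction of $\widetilde\alpha$) lying in an interval of width at most $\pi$ about $0$; in particular a single-valued continuous polar angle $\theta$ is defined there. Then $\widetilde S\setminus\widetilde\alpha=\{\theta>0\}\sqcup\{\theta<0\}$, and each piece is open (a preimage of an open half-line under the continuous map $\theta$), nonempty (since $\widetilde\alpha$ points strictly into $S$, interior directions occur on both sides of it), and path-connected (star-shapedness lets one slide any point radially toward $O$ and then sweep the angle, all without changing its sign). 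Two disjoint nonempty open path-connected sets covering the space are precisely its two connected components, so $\widetilde S\setminus\widetilde\alpha$, and hence $S\setminus\alpha([0,1])$, has two connected components.

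The main obstacle I expect is the second step: establishing rigorously that strict convexity forces the interior of $\alpha$ into $\mathrm{Int}\,S$ and prevents $\widetilde S$ from extending past $Q$ along the ray $\{\theta=0\}$, so that deleting $\widetilde\alpha$ genuinely severs that radial slice. Once that is in hand the separation argument is soft, the only remaining care being that the radial function of the star-shaped region stays positive on the open cone of inward directions, which is what guarantees connectedness of the two angular sectors.
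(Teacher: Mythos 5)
Your proof is correct, but it takes a genuinely different route from the paper's. The paper also passes to a normal coordinate chart about $\alpha(0)$ containing $S$, but then forms the Jordan curve $\beta=\partial S_1\cup\alpha([0,1])$, where $\partial S_1$ is one of the two arcs of $\partial S\backslash\{\alpha(0),\alpha(1)\}$, and invokes the Jordan curve theorem to split the chart into two components, one bounded by $\psi(\beta)$. You instead avoid the Jordan curve theorem entirely: you use the geodesic convexity of $S$ to show that $\widetilde S=\exp_p^{-1}(S)$ is star-shaped about the origin, identify $\widetilde\alpha$ with the full radial slice $\widetilde S\cap\{\theta=0\}$, and separate by the sign of a single-valued polar angle. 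What your approach buys is self-containedness (no appeal to a nontrivial topological theorem) and an explicit description of the two components as angular sectors; what it costs is a heavier reliance on the strict convexity of $S$, which you need three times --- for star-shapedness, for the claim that $\widetilde S$ meets the ray through $Q$ in exactly the segment $[O,Q]$ (so that deleting $\widetilde\alpha$ really severs the slice), and for the claim that $\widetilde S\setminus\{O\}$ avoids the tangent line at $O$ so that $\theta$ is single-valued and the angular sweep near $O$ stays inside $\mathrm{Int}\,\widetilde S$. The paper's proof is shorter but silently uses some of the same convexity facts (e.g.\ that $\alpha((0,1))\subseteq\mathrm{Int}\,S$, so that $\beta$ is in fact a simple closed curve), so the two arguments are comparable in the rigour of what they leave implicit; the step you flag as the main obstacle does go through exactly as you sketch it.
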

	\begin{proof}
	Take a normal coordinate chart $\psi: U\to V\subseteq\reals^2$ about $\alpha(0)$ such that $S\subseteq U$. Then $\psi(\alpha([0,1]))$ is a straight line in $U$. Now, $\partial S\backslash\{\alpha(0),\alpha(1)\}$ has two path components, say $\partial S_1$ and $\partial S_2$. Let $\beta = \partial S_1\cup\alpha([0,1])$. Then $\psi(\beta)$ is a Jordan curve in $V$. So by the Jordan curve theorem $V\backslash\psi(\beta)$ has two connected components. i.e. $\psi(S\backslash\alpha([0,1]))$ has exactly two connected components, one of which is bounded by $\psi(\beta)$.
	\end{proof}
	
	\begin{figure*}
		\center
		\begin{tikzpicture}
			
			\tikzmath{
			    \d1 = 10cm;
			    \d2 = 4cm; 
			    coordinate \c;
			    \c1 = (0,0);
			    \c2 = (-0.2,-0.4);
			}
			
			
			\coordinate (a) at (0,5);
			
			\node[circle,draw, name path = circle1] (c1) at (\c1) [minimum size = \d1] {};
			\fill (a) circle (0.1cm) node[above]{$x_1$};
			\node[circle,draw] (c2) at (\c2) [minimum size = \d2]{};
			
			\coordinate (b1) at (tangent cs:node=c2,point={(a)},solution=1);
			\coordinate (b2) at (tangent cs:node=c2,point={(a)},solution=2);
			
			\coordinate (d1) at ($2*(b1)-(a)$);
			\coordinate (d2) at ($2*(b2)-(a)$);
			
			\draw[opacity=0, name path = t1] (a) -- (b1) -- (d1);
			\draw[-{Latex[length=5mm,width=2mm]}](a)--(b1) node[left, midway] {$\gamma_1$};
			\draw[name path = tprime, name intersections = {of = t1 and circle1, by={da1,da2}}] (a) -- (da2);
			\fill (da2) circle (0.1cm) node[left]{$y_1$};
			
			\draw[opacity=0, name path = t2] (a) -- (b2) -- (d2);
			\draw[blue, dashed,name intersections = {of = t2 and circle1, by={db1,db2}}] (a) -- (db2) node[right, near start] {$\alpha$};
			\fill[blue] (db2) circle (0.1cm) node[blue, right]{$\alpha(1)$};
			\begin{scope}
				\clip (a) -- (-6,6) -- (-6,-6) -- (da2)--cycle;
				\draw [pattern=north west lines, pattern color=gray, opacity = 0.7] (\c1) circle (5cm);
			\end{scope}
			\begin{scope}			
				\clip (a) -- (6,6) -- (6,-6) -- (0,-6) -- (d1)--cycle;
				\draw [pattern=dots, pattern color=gray, opacity = 0.4] (\c1) circle (5cm);
			\end{scope}
			\fill[white] (\c2) circle (2cm);
			\node at (\c2) {$K_l$};
			\node at (-3,2) {$\widetilde S_2$};
			\node at (2,-3) {$\widetilde S_1$};
			
		\end{tikzpicture}
		\caption{$S$ is split into two components $\widetilde S_1$ (dotted area) and $\widetilde S_2$ (dashed aread).}\label{fig:components}
	\end{figure*}
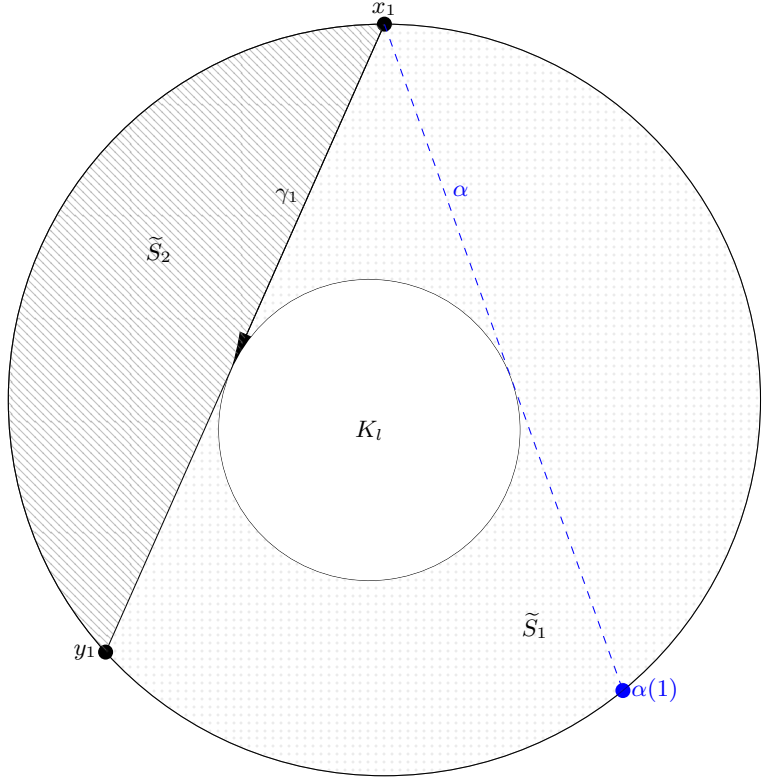
	
	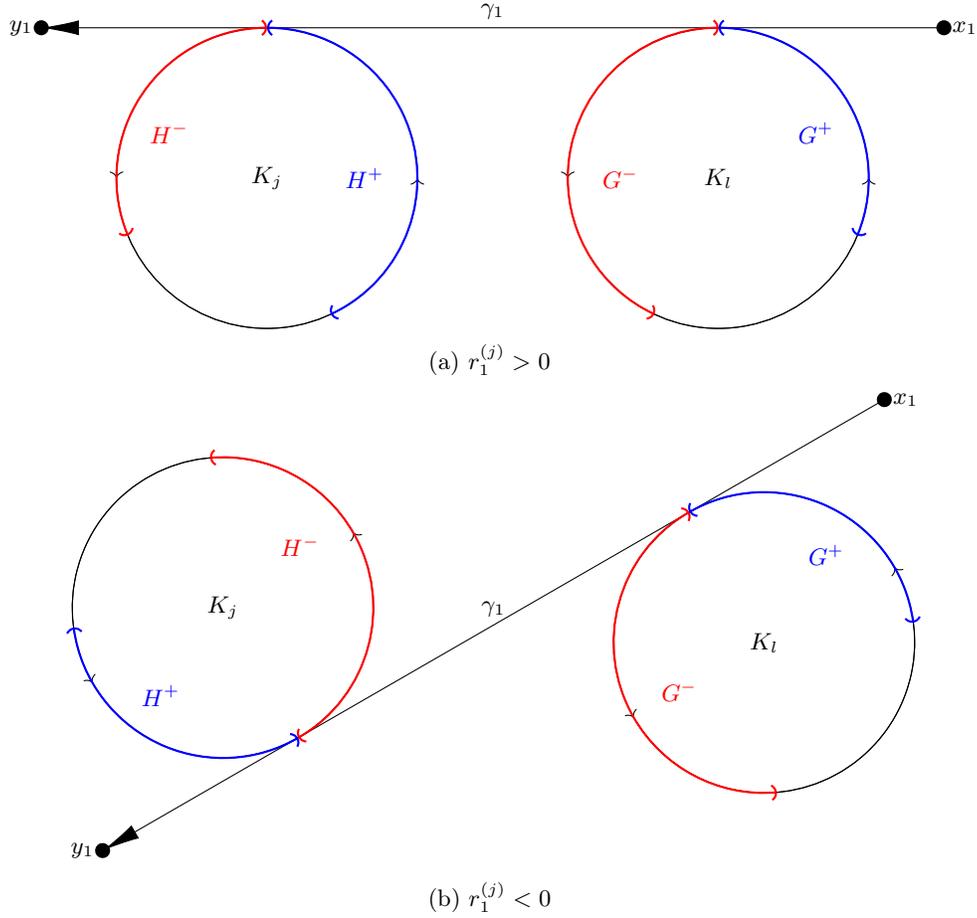
\begin{figure*}
	\center
		\subcaptionbox{$r_1^{(j)}>0$\label{fig:gplusminus_positive}}{
		\begin{tikzpicture}
			\tikzmath{
			    \r1 = 2cm;
			    \d1 = 4cm; 
			    coordinate \c;
			    \c1 = (3,0);
			    \c2 = (-3,0);
			}
			
			\node[circle,draw] (c1) at (\c1) [minimum size = \d1]{$K_l$};
			\coordinate (c1a) at ($(\c1)+(2,0)$);
			\coordinate (c1b) at ($(\c1)+(-2,0)$);
			
			\node[circle,draw] (c2) at (\c2) [minimum size = \d1]{$K_j$};
			\coordinate (c2a) at ($(\c2)+(-2,0)$);
			\coordinate (c2b) at ($(\c2)+(2,0)$);
			
			\coordinate (a) at (6,2);
			\coordinate (b) at (-6,2);
			\coordinate (ta1) at (tangent cs:node=c1,point={(a)},solution=1);
			\coordinate (ta2) at (tangent cs:node=c1,point={(a)},solution=2);
			\coordinate (tb1) at (tangent cs:node=c1,point={(b)},solution=1);
			\coordinate (tb2) at (tangent cs:node=c1,point={(b)},solution=2);
			\coordinate (sa1) at (tangent cs:node=c2,point={(a)},solution=1);
			\coordinate (sa2) at (tangent cs:node=c2,point={(a)},solution=2);
			\coordinate (sb1) at (tangent cs:node=c2,point={(b)},solution=1);
			\coordinate (sb2) at (tangent cs:node=c2,point={(b)},solution=2);
			
			\draw[-{Latex[length=5mm,width=2mm]}] (a) -- (b) node[midway, above]{$\gamma_1$};
			\fill (a) circle (0.1cm) node[right]{$x_1$};
			\fill (b) circle (0.1cm) node[left]{$y_1$};
			
			
			\pic [draw, angle radius=2cm,->] {angle=c1a--c1--c1b};
			\pic [draw, angle radius=2cm,->] {angle=c1b--c1--c1a};
			\pic [draw, red, angle radius=2cm, {Parenthesis}-{Parenthesis},thick] {angle=ta1--c1--tb1};
			\pic [draw, blue, angle radius=2cm, {Parenthesis}-{Parenthesis},thick] {angle=ta2--c1--ta1};
			\node[blue] (g+) at (4.3,0.6) {$G^+$};
			\node[red] (g-) at (1.7,0) {$G^-$};

			\pic [draw, angle radius=2cm,->] {angle=c2a--c2--c2b};
			\pic [draw, angle radius=2cm,->] {angle=c2b--c2--c2a};
			\pic [draw, red, angle radius=2cm, {Parenthesis}-{Parenthesis},thick] {angle=sa1--c2--sb1};
			\pic [draw, blue, angle radius=2cm, {Parenthesis}-{Parenthesis},thick] {angle=sa2--c2--sa1};
			\node[blue] (h+) at (-1.7,0) {$H^+$};
			\node[red] (h-) at (-4.3,0.6) {$H^-$};
			
		\end{tikzpicture}
		}
		
		\subcaptionbox{$r_1^{(j)}<0$\label{fig:gplusminus_negative}}{
		\begin{tikzpicture}[rotate=30]
			
			\tikzmath{
			    \r1 = 2cm;
			    \d1 = 4cm; 
			    coordinate \c;
			    \c1 = (3,0);
			    \c2 = (-3,4);
			}
			
			\node[circle,draw] (c1) at (\c1) [minimum size = \d1]{$K_l$};
			\coordinate (c1a) at ($(\c1)+(2,0)$);
			\coordinate (c1b) at ($(\c1)+(-2,0)$);
			
			\node[circle,draw] (c2) at (\c2) [minimum size = \d1]{$K_j$};
			\coordinate (c2a) at ($(\c2)+(-2,0)$);
			\coordinate (c2b) at ($(\c2)+(2,0)$);
			
			\coordinate (a) at (6,2);
			\coordinate (b) at (-6,2);
			\coordinate (ta1) at (tangent cs:node=c1,point={(a)},solution=1);
			\coordinate (ta2) at (tangent cs:node=c1,point={(a)},solution=2);
			\coordinate (tb1) at (tangent cs:node=c1,point={(b)},solution=1);
			\coordinate (tb2) at (tangent cs:node=c1,point={(b)},solution=2);
			\coordinate (sa1) at (tangent cs:node=c2,point={(a)},solution=1);
			\coordinate (sa2) at (tangent cs:node=c2,point={(a)},solution=2);
			\coordinate (sb1) at (tangent cs:node=c2,point={(b)},solution=1);
			\coordinate (sb2) at (tangent cs:node=c2,point={(b)},solution=2);
			
			\draw[-{Latex[length=5mm,width=2mm]}] (a) -- (b) node[midway, above]{$\gamma_1$};
			\fill (a) circle (0.1cm) node[right]{$x_1$};
			\fill (b) circle (0.1cm) node[left]{$y_1$};
			
			
			\pic [draw, angle radius=2cm,->] {angle=c1a--c1--c1b};
			\pic [draw, angle radius=2cm,->] {angle=c1b--c1--c1a};
			\pic [draw, red, angle radius=2cm, {Parenthesis}-{Parenthesis},thick] {angle=ta1--c1--tb1};
			\pic [draw, blue, angle radius=2cm, {Parenthesis}-{Parenthesis},thick] {angle=ta2--c1--ta1};
			\node[blue] (g+) at (4.3,0.6) {$G^+$};
			\node[red] (g-) at (1.7,0) {$G^-$};

			\pic [draw, angle radius=2cm,->] {angle=c2a--c2--c2b};
			\pic [draw, angle radius=2cm,->] {angle=c2b--c2--c2a};
			\pic [draw, red, angle radius=2cm, {Parenthesis}-{Parenthesis},thick] {angle=sa2--c2--sa1};
			\pic [draw, blue, angle radius=2cm, {Parenthesis}-{Parenthesis},thick] {angle=sb2--c2--sa2};
			\node[blue] (h+) at (-4.3,3.4) {$H^+$};
			\node[red] (h-) at (-1.7,4.2) {$H^-$};
		\end{tikzpicture}
		}
		
		\caption{We can identify the sets of geodesics $G^+(\gamma_1)$ and $G^-(\gamma_1)$ with arcs, $G^+$ (blue) and $G^-$ (red), respectively, of $\partial K_l$.}\label{fig:gplusminus}
	\end{figure*}
	
	\begin{proposition}\label{prop:bitangent_upper_bound}
		For any pair $K_l$, $K_j$ of distinct obstacles, there are at most 4 undirected smooth (i.e. non-reflecting) geodesics which are tangent to both $K_l$ and $K_j$.
	\end{proposition}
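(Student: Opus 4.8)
The plan is to reduce the count of undirected common tangents to a count of \emph{oriented} tangent geodesics, and then to exploit the geodesic convexity available inside $S$. First I would observe that any geodesic tangent to both $K_l$ and $K_j$ may be oriented in a unique way so that $K_l$ lies on its left: since $K_l$ is strictly convex a tangent geodesic does not enter its interior, so it has a well-defined side for $K_l$, and exactly one of the two orientations puts $K_l$ on the left. This is a bijection between undirected common tangents and oriented common tangents with $K_l$ on the left, so it suffices to bound the latter by $4$. I would then parametrise the oriented geodesics tangent to $K_l$ by their point of tangency, writing $\gamma_s$ for the geodesic tangent to $\partial K_l$ at $c_l(s)$ in the positively oriented direction, with $s$ running once around $\mathbb{S}^1$. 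Strict convexity of $\partial K_l$ (positive geodesic curvature) guarantees that $s\mapsto\gamma_s$ is a smooth embedded loop along which the tangent direction turns monotonically, matching the arcs $G^\pm$ of \Cref{fig:gplusminus}.

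Next I would classify the tangencies to $K_j$. By \Cref{lemma:twocomponents} each $\gamma_s$ cuts $S$ into two components, which I call $\mathrm{left}(\gamma_s)$ and $\mathrm{right}(\gamma_s)$; and because $\mathrm{diam}(S)<\rho$, two distinct geodesics meet at most once inside $S$, so a geodesic segment meets the boundary of the geodesically convex set $K_j$ at most twice. Hence every $\gamma_s$ either misses $K_j$ with $K_j$ entirely on the left, misses it with $K_j$ entirely on the right, crosses $\mathrm{Int}\,K_j$, or is tangent to $K_j$. Writing $L=\{s:K_j\subseteq\mathrm{left}(\gamma_s)\}$ and $R=\{s:K_j\subseteq\mathrm{right}(\gamma_s)\}$ (both open, and neither equal to $\mathbb{S}^1$ since $K_l,K_j$ are disjoint), the oriented common tangents with $K_l$ on the left are exactly the boundary points in $\mathbb{S}^1$ of $L$ (the \emph{external} tangents, where $K_j$ is also on the left) together with the boundary points of $R$ (the \emph{internal} tangents, where $K_j$ is on the right). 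Thus the whole statement follows once I show that each of $L$ and $R$ is connected, for then each contributes at most two boundary points and degenerate configurations only reduce the count.

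For the external tangents I would argue via the geodesic convex hull $H$ of $K_l\sqcup K_j$, which exists and is convex inside $S$. Its boundary $\partial H$ is a closed convex curve whose intersections with $\partial K_l$ and with $\partial K_j$ are each a single arc, joined by exactly two maximal geodesic segments; each such bridging segment is a supporting geodesic tangent to both bodies, i.e. an external common tangent, and there are exactly two of them. This pins the boundary of $L$ to exactly two points, forcing the open set $L$ to be a single arc, so there are at most (in fact exactly) two external tangents.

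The core of the argument, and the step I expect to be the main obstacle, is the connectivity of $R$, i.e. bounding the internal (crossing) tangents by two. The enabling fact is that, since $\mathrm{diam}(S)<\rho$, each half $\mathrm{left}(\gamma_s)$, $\mathrm{right}(\gamma_s)$ is itself geodesically convex: if two points lay on one side but the geodesic joining them crossed $\gamma_s$, it would cross an even number of times, contradicting the at-most-one-intersection property. Granting this, the points of $R$ are precisely the tangent geodesics of $K_l$ that \emph{separate} $K_l$ from $K_j$, and I would aim to show their tangency points sweep out a single arc on the face of $\partial K_l$ turned toward $K_j$: given $s_0,s_1\in R$, the set $K_j$ lies in the convex region $\mathrm{right}(\gamma_{s_0})\cap\mathrm{right}(\gamma_{s_1})$, and I would try to show that the intermediate tangent geodesics cannot put any part of $K_j$ on their left without crossing $\gamma_{s_0}$ or $\gamma_{s_1}$ twice, or crossing $\partial K_j$ more than twice. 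I anticipate that making this ``single arc'' claim fully rigorous — genuinely ruling out that $R$ breaks into two or more arcs as $\gamma_s$ rotates — is the delicate point, and that it is exactly here that strict convexity of both $\partial K_l$ and $\partial K_j$, combined with the at-most-one-intersection property, must be used; the curvature hypotheses on $S$ and $\partial K$ enter only through guaranteeing these convexity and injectivity facts throughout $S$.
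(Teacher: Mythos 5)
Your framework is genuinely different from the paper's and is a reasonable way to organise the count: you trade the paper's sign bookkeeping (the quantities $r_i^{(l)}, r_i^{(j)}$ and the order of tangencies) for the two open subsets $L,R\subseteq\mathbb{S}^1$ of tangency parameters for which $K_j$ lies wholly left or wholly right of the oriented tangent geodesic, and you correctly identify the common tangents with $\partial L\cup\partial R$. However, the proof has a genuine gap exactly where you flag it: the connectivity of $R$ (and, less visibly, of $L$) is the entire content of the proposition, and it is not established. The sketch you offer for $R$ is not even correctly formulated: given $s_0,s_1\in R$ there are two arcs of $\mathbb{S}^1$ joining them, only one of which can lie in $R$ (the tangent geodesics touching $\partial K_l$ on the side facing away from $K_j$ have $K_j$ on their \emph{left}, hence lie in $L$), yet your argument that ``intermediate tangent geodesics cannot put any part of $K_j$ on their left'' makes no distinction between the two arcs and, if it worked as stated, would prove $R=\mathbb{S}^1$. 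Deciding \emph{which} arc lies in $R$ requires locating $K_j$ inside a region bounded by $\gamma_{s_0}$, $\gamma_{s_1}$ and an arc of $\partial K_l$ and deriving a contradiction from a forced double intersection of geodesics --- this is precisely the work the paper does with the sets $G^{\pm}(\gamma_1)$, $H^{\pm}(\gamma_1)$, the decomposition of $S$ via \Cref{lemma:twocomponents}, and the at-most-one-intersection property, and none of it appears in your proposal.

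The external count via the geodesic convex hull $H$ of $K_l\sqcup K_j$ has a similar problem: you import from Euclidean geometry the existence of $H$, its geodesic convexity, and the claim that $\partial H$ consists of one arc of each obstacle boundary joined by exactly two bridging geodesic segments, each an external common tangent, and that every external common tangent arises this way. None of these facts is available in the paper, and establishing them in the Riemannian setting (even granting your correct observation that the two components of $S\setminus\gamma_s$ are geodesically convex because two geodesics in $S$ meet at most once) is comparable in difficulty to the statement being proved. So while the reduction to ``$L$ and $R$ are single arcs'' is a clean reformulation, the proposal defers rather than supplies the argument; to complete it you would need to reproduce, in your language, the paper's region-enclosure and double-intersection contradictions (\Cref{fig:bitangent_enclosed_region}).
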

	\begin{proof}
		First we begin with some definitions. Let $\gamma_1,\gamma_2:[0,1]\to S$ be two distinct, directed smooth geodesics in $S$, with $x_i = \gamma_i(0)\in\partial S$ and $y_i = \gamma_i(1)\in\partial S$, for $i=1,2$.
		Now suppose that $\gamma_1$ and $\gamma_2$ are both tangent to the same obstacle $K_l$, at two distinct points $p_1,p_2\in \partial K_l$ respectively. Parameterise $\partial K_l$ in the anti-clockwise direction as $k_l:\mathbb{S}^1\to S$. Let $E(s,t) = \flow_t(k_l(s),k_l'(s))$. Then there is a smooth positive function $t(s)>0$ such that $F(s) = \pi_1\circ E(s,t(s))$ is a smooth diffeomorphism onto $\partial S$. For $\widetilde E(s,t) = \flow_{-t}(k_l(s),k_l'(s))$, there is a corresponding smooth positive function $\widetilde t(s)>0$ such that $\widetilde F(s) = \pi_1\circ\widetilde E(s,\widetilde t(s))$ is a diffeomorphism on $\partial S$. Note that $F$ and $\widetilde F$ are only well-defined when both $K_j$ and $S$ are strictly convex. There is some $s_1\in\mathbb{S}^1$ such that $k_l(s_1)=p_1$. Suppose that $\langle\dot\gamma_1,\dot k_l(s_1)\rangle>0$, otherwise we can replace $\gamma_1(t)$ with $\gamma_1(1-t)$. Then $F(s_1) = y_1\in\partial S$. Now, there is some $a_0\in\mathbb{S}^1$ such that $F(a_0)=x_1$. Let $\alpha:[0,1]\to S$ be the smooth geodesic such that $\alpha(0) = x_1$ and $\alpha$ is tangent to $\partial K_l$ at $k_l(a_0)$. Consider the path components, $S_1$ and $S_2$ of $\partial S\backslash\{x_1,y_1\}$. Let $S_1$ be the path component containing $\alpha(1)$.
		Take a normal coordinate chart $\phi: M\supseteq U\to V$ about $x_1$ large enough so that $S\subseteq U$. Then $\phi(\gamma_1([0,1]))$ is the straight line in $V$ from $\phi(x_1)$ to $\phi(y_1)$. By \Cref{lemma:twocomponents}, $\phi(S)$ is therefore split by $\phi(\gamma_1)$ into two path-components $\phi(\tilde S_1)$ and $\phi(\tilde S_2)$. Their boundaries are precisely $\partial\tilde S_i = S_i\cup\gamma_1([0,1])$, for $i=1,2$. It follows that $\alpha((0,1])\subseteq \tilde S_1$, since $\alpha$ and $\gamma_1$ can intersect at most once. Since $\gamma_1$ is tangent to $\partial K_l$, and $\partial K_l$ is strictly convex, $\gamma_1$ intersects $\partial K_l$ exactly once. Thus we must have $\partial K_l\subseteq \overline{\tilde S_i}$ for either $i = 1$ or $i = 2$. But $\alpha$ is also tangent to $\partial K_l$, and $\alpha([0,1])\subseteq \overline{\tilde S_1}$, therefore $\partial K_l\subseteq \overline{\tilde S_1}$ (\Cref{fig:components}). 
		For each $s\in\mathbb{S}^1$ let $\widetilde G_s(t) = \pi_1\circ E(s,t)$. Let $G_s:\mathbb{S}^1\times [0,1]$ be a re-parameterisation of $\widetilde G_s$ such that $G(0) = \widetilde G_s(-\widetilde t(s))$, $G(1) = \widetilde G_s(t(s))$, and $\pdiff{t}\widetilde G_s(t(s)) = \lambda_s\pdiff{t}G_s(1)$ for some $\lambda_s>0$. For each $s\in\mathbb{S}^1$ there is some $\omega_s\in[0,1]$ such that $G_s(\omega_s)=\widetilde G_s(0)\in\partial K_l$. Then by construction, $G_s([\omega_s,1])\subseteq\tilde S_1$ for all $s\in F^{-1}(S_1)$. On the other hand, $G_s([\omega_s,1])\cap\tilde S_i\neq\emptyset$ for $i=1$ and $2$, for all $s\in F^{-1}(S_2)$. That is, $G_s$ intersects $\gamma_1$ exactly once, for all $s\in F^{-1}(S_2)$. Similarly $G_s([0,\omega_s]))\cap\tilde S_2\neq\emptyset$ for all $s\in\widetilde F^{-1}(S_2)$.
		Define the sets
		\[G^+(\gamma_1) = \{G_s : s\in F^{-1}(S_2)\}\]\[G^-(\gamma_1) = \{G_s : s\in\widetilde F^{-1}(S_2)\},\]
		and their union
		\[ G(\gamma_1) = \{G_s : s\in F^{-1}(S_2)\cup\widetilde F^{-1}(S_2)\}.\]
		This is precisely the set of all geodesics tangent to $\partial K_l$ with the same orientation as $\gamma_1$, which intersect $\gamma_1$ in the interior of $S$. We will show later that the geodesics in $G^+(\gamma_1)$ differ to those in $G^-(\gamma_1)$ in terms of where they intersect $\gamma_1$.
		
		Now suppose that $\gamma_1$ and $\gamma_2$ are both tangent to $\partial K_l$ and $\partial K_j$, such that both $\gamma_1$ and $\gamma_2$ intersect $\partial K_l$ before intersecting $\partial K_j$. Let $k_j:\mathbb{S}^1\to S$ be an anti-clockwise parameterisation of $\partial K_j$. For $i=1,2$ let $\gamma_i(t_i)=k_l(s_i)$ and $\gamma_i(t'_i)=k_j(s'_i)$ be the tangential intersection points of $\gamma_i$ with $\partial K_l$ and $\partial K_j$ respectively, for some $t_i,t'_i\in\reals$ and $s_i,s'_i\in\mathbb{S}^1$. Then $0<t_i<t'_i<1$ by assumption. Let $r_i^{(l)}$ and $r_i^{(j)}$ be the signs of $\langle\dot\gamma_i(t_i),k_l'(s_i)\rangle$ and $\langle\dot\gamma_i(t'_i),k_j'(s'_i)\rangle$ respectively. We claim that if $r_1^{(l)} = r_2^{(l)}$ then $r_1^{(j)}\neq r_2^{(j)}$. It suffices to show that this is true for $r_1^{(l)}=1$, since the alternative case follows by replacing $\gamma_i(t)$ with $\gamma_i(1-t)$. Assume that $r_1^{(j)}= r_2^{(j)}$.
		We begin by defining analogous maps and sets for $\partial K_j$ as follows. Let $E'(s,t) = \flow_{t}(k_j(s),r_1^{(j)}k'_j(s))$, and $\widetilde E'(s,t) = \flow_{-t}(k_j(s),r_1^{(j)}k'_j(s))$. Then as before, there are positive maps $\tau(s),\widetilde\tau(s)>0$ such that $F'(s) = \pi_1\circ E'(s,\tau(s))$ and $\widetilde F'(s) = \pi_1\circ\widetilde E'(s,\widetilde\tau(s))$ are diffeomorphisms onto $\partial S$. For each $s\in\mathbb{S}^1$ let $\widetilde H_s(t) = \pi_1\circ E'(s,t)$. Let $H_s:\mathbb{S}^1\times [0,1]$ be a re-parameterisation of $\widetilde H_s$ such that $H(0) = \widetilde H_s(-\widetilde \tau(s))$, $H(1) = \widetilde H_s(\tau(s))$, and $\pdiff{t}\widetilde H_s(\tau(s)) = \lambda'_s\pdiff{t}G_s(1)$ for some $\lambda'_s>0$. For each $s\in\mathbb{S}^1$ there is some $\mu_s\in[0,1]$ such that $H_s(\mu_s)=\widetilde H_s(0)\in\partial K_j$. Finally, define the sets
		\[H^+(\gamma_1) = \{H_s : s\in F'^{-1}(S_2)\}\]\[H^-(\gamma_1) = \{H_s : s\in\widetilde F'^{-1}(S_2)\},\]
		and their union
		\[ H(\gamma_1) = \{H_s : s\in F'^{-1}(S_2)\cup\widetilde F'^{-1}(S_2)\}.\]
		We now have two cases, depending on the sign of $r_1^{(j)}$. First, if $r_1^{(j)}<0$ (\Cref{fig:gplusminus_negative}), then $\partial K_j\subseteq\overline{\tilde S_2}$, since both $\partial K_l$ and $\partial K_j$ were given anti-clockwise parameterisations. Now since $\gamma_2$ is tangent to both $\partial K_l\subseteq\overline{\tilde S_1}$ and $\partial K_j\subseteq\overline{\tilde S_2}$, it must intersect $\gamma_1$. Therefore $\gamma_2 = G_{s_2} = H_{{s'_2}}\in G(\gamma_1)\cap H(\gamma_1)$. Note that this is only true due to the inclusion of the sign term $r_1^{(j)}$ in the definition of $E'$ and $\widetilde E'$, which ensures that $G_{s_2}$ and $H_{s'_2}$ have the same orientation as $\gamma_2$. Now since $\gamma_2$ intersects $\partial K_l$ prior to $\partial K_j$, it follows that $G_{s_2}([\omega_s,1])\cap \widetilde S_2\neq\emptyset$. Hence $G_{s_2}\in G^+(\gamma_1)$. Reasoning in a similar manner, $H_{s'_2}([0,\mu_s])\cap \tilde S_1\neq\emptyset$ so $H_{s'_2}\in H^+(\gamma_1)$. Let $z:G(\gamma_1)\to\gamma_1([0,1])$ be the intersection between $G_{s^*}$ and $\gamma_1$, for any $G_{s^*}\in G(\gamma_1)$. We claim that, $z(G^+(\gamma_1))\subseteq\gamma_1([0,t_1])$. 
		
		Consider $\partial G^+(\gamma_1)$. There are two smooth geodesics in $\partial G^+(\gamma_1)$, one of which is $\gamma_1$. The other is $G_{\widetilde s}$, where $\widetilde s = F^{-1}(x_1)$. We remark that therefore $\partial G^+(\gamma_1)\cap G(\gamma_1) = \emptyset$. Now for any $G_{s^*}\in G^+(\gamma_1)$, the point of tangency $G_{s^*}(\omega_{s^*})\in\partial K_l$ is contained in the region bounded by $k_l([\widetilde s, s_1])$, $\gamma_1([0,t_1])$ and $G_{\widetilde s}([\omega_{\widetilde s},1])$. Furthermore, $G_{s^*}(1)\in S_2$, so $G_{s^*}$ must intersect either $\gamma_1([0,t_1])$ (in which case the claim is true), or $G_{\widetilde s}([\omega_{\widetilde s},1])$. Suppose the latter case holds. $G_{\widetilde s}$ splits $\overline{\widetilde S_1}\backslash G_{\widetilde s}([0,1])$ into two path components, $S_1^A$ and $S_1^B$ (by \Cref{lemma:twocomponents}). Let $\overline {S_1^A}$ be the component containing $K_l$. Then for some $(a,b)\subseteq [\omega_{s^*},1]$, we have $G_{s^*}((a,b))\subseteq S_1^B$. But $\partial S_1^B\cap\partial\widetilde S_2 = \{x_1\}$, so $G_{s^*}$ cannot reach $\widetilde S_2$ without intersecting $S_1^A$. That is, $G_{s^*}([b,1])\cap S_1^A\neq\emptyset$, since $G_{s^*}(1)\in S_2$. Therefore $G_{s^*}$ must intersect $G_{\widetilde s}$ twice, a contradiction, which shows our claim holds. Note that a similar argument will show that $z(G^-(\gamma_1))\subseteq \gamma_1([t_1,1])$.
		
		Hence $G_{s_2}$ intersects $\gamma_1$ prior to the first tangency $\gamma_1(t_1)$. Similarly, if $\tilde z:H(\gamma_1)\to\gamma_1([0,1])$ is the intersection between $H_{s^*}$ and $\gamma_1$ for any $H_{s^*}\in G(\gamma_1)$, then \[\tilde z(H^{+}(\gamma_1))\subseteq\gamma_1([t'_1,1])\text{ and }\tilde z(H^{-}(\gamma_1))\subseteq\gamma_1([0,t'_1]).\]
		Note that if $r_1^{(j)}>0$ then $H^+(\gamma_1)$ and $H^-(\gamma_1)$ swap. Recalling that $\gamma_2 = G_{s_2} = H_{s'_2}\in G^+(\gamma_1)\cap H^+(\gamma_1)$, we have $z(\gamma_2)\in \gamma_1([0,t_1])$, while $\widetilde z(\gamma_2) \in \gamma_1([t'_1,1])$. That is, $z(\gamma_2)\neq\widetilde z(\gamma_2)$, meaning that $\gamma_2$ must intersect $\gamma_1$ twice, a contradiction.
		
		 We now consider the case where $r_1^{(j)}>0$ (\Cref{fig:gplusminus_positive}). In this case, both $\partial K_l$ and $\partial K_j$ are in the same connected component $\overline{\tilde S_1}$. Suppose that $\gamma_2$ intersects $\gamma_1$, then as in the previous case we have $\gamma_2 = G_{s_2} = H_{s'_2}\in G(\gamma_1)\cap H(\gamma_1)$. Also let $z$ and $\tilde z$ be defined in the same way as in the previous case. Since $\partial K_j$ is in the same component of $S$ as $\partial K_l$, and $\gamma_2$ can intersect $\gamma_1$ at most once, it follows that the intersection must occur after $\gamma_2$ is tangent to $\partial K_j$. That is, $z(G_{s_2})=\tilde z(H_{s'_2})\in\gamma_2([t'_2,1])$. Note that by construction, $z(G_{s^*})\in G_{s^*}([0,\omega_s])$ for any $G_{s^*}\in G^-(\gamma_1)$. Therefore, $G_s\in G^+(\gamma_1)$, since $z(G_s)\not\in G_s([0,\omega_s])$. Now $G_s\in G^+(\gamma_1)$, as in the previous case, implies that $z(G_s)\in\gamma_1([0,t_1])$. Therefore it follows that $z(G_s)\in\gamma_1([0,t_1])\cap\gamma_2([t'_2,1])$. Pick $t^*_1,t^*_2\in[0,1]$ such that $\gamma_1(t^*_1)=\gamma_2(t^*_2)$ is the point of intersection between $\gamma_1$ and $\gamma_2$.
		 Then it follows that $\partial K_j$ must be contained in the region bounded by $\gamma_1([t^*_1,t_1]), \gamma_2([t'_2,t_2^*])$ and $k_l([s_1,s_2])$ (\Cref{fig:bitangent_enclosed_region}). But then $t'_1\in [t^*_1,t_1]$, a contradiction since we assumed that $t'_1>t_1$. Therefore $\gamma_2$ cannot intersect $\gamma_1$. Finally, suppose that $\gamma_2$ does not intersect $\gamma_1$. Then $\gamma_2\not\in G(\gamma_1)\cup H(\gamma_1)$. Note that by assumption $t'_2>t_2$, hence $\partial K_j$ must be contained within the region bounded by $\gamma_2([t'_2,1]), \gamma_1([0,t_1,]), k_l([s_1,s_2])$ and $\partial S$. But then $t'_1\in [0,t_1]$, once again a contradiction since we assumed that $t'_1>t_1$. Thus our claim that $r_1^{(j)}\neq r_2^{(j)}$ whenever $r_1^{(l)} = r_2^{(l)}$ holds is proved.
		
		Therefore there can be at most eight directed smooth geodesics which are tangent to $\partial K_l$ and $\partial K_j$ depending on the pairs of signs $r_1^{(l)}$ and $r_1^{(j)}$, and depending on which obstacle they intersect first. That is, there are at most 4 undirected smooth geodesics which are tangent to both $\partial K_l$ and $\partial K_j$.
	\end{proof}
	
	\begin{figure}
		\center
		\begin{tikzpicture}
			\tikzmath{
			    \r1 = 2cm;
			    \d1 = 4cm; 
			    coordinate \c;
			    \c1 = (0,0);
			}
			
			\node[circle,draw] (c1) at (\c1) [minimum size = \d1]{$K_l$};
			\coordinate (c1a) at ($(\c1)+(2,0)$);
			\coordinate (c1b) at ($(\c1)+(-2,0)$);
			
			\coordinate (a) at (6,2);
			\coordinate (b) at (6,4);
			\coordinate (ta1) at (tangent cs:node=c1,point={(a)},solution=1);
			\coordinate (ta2) at (tangent cs:node=c1,point={(a)},solution=2);
			\coordinate (tb1) at (tangent cs:node=c1,point={(b)},solution=1);
			\coordinate (tb2) at (tangent cs:node=c1,point={(b)},solution=2);
			
			\draw[name path = gamma1, -{Latex[length=5mm,width=2mm]}, shorten >= -1cm] (a) -- (ta1) node[midway, above]{$\gamma_1$};
			
			\draw[name path = gamma2, -{Latex[length=5mm,width=2mm]}, shorten <= -1cm] (tb2) -- (b) node[midway, below right]{$\gamma_2$};
			
			\fill[name intersections = {of = gamma1 and gamma2, by={z}}] (z) circle (0.1cm) node[below right]{$\gamma_1(t^*_1)=\gamma_2(t^*_2)$};
			
			\draw [pattern=north west lines, pattern color=gray, opacity = 0.4] (z) -- (ta1) -- (tb2) -- cycle;
			\node[circle,fill,white] at (\c1) [minimum size = \d1]{$K_l$};
			\node at (\c1){$K_l$};

			\fill (ta1) circle (0.1cm) node[above]{$\gamma_1(t_1)$};
			\fill (tb2) circle (0.1cm) node[below,right]{$\gamma_2(t_2)$};
			
			\pic [draw, angle radius=2cm,->] {angle=c1a--c1--c1b};
			\pic [draw, angle radius=2cm,->] {angle=c1b--c1--c1a};
			\pic [draw, blue, angle radius=2cm, {Parenthesis}-{Parenthesis},thick] {angle=ta2--c1--ta1};
			\node[blue] (g+) at (1.3,0.6) {$G^+$};
		\end{tikzpicture}
		\caption{When $\gamma_2\in G^+(\gamma_1)$, it forms an enclosed region together with $\gamma_1$ and $\partial K_l$ (shaded). Assuming that $r^{(j)}_1,r^{(j)}_2>0$ forces $K_j$ to lie within the shaded region.}\label{fig:bitangent_enclosed_region}
	\end{figure}
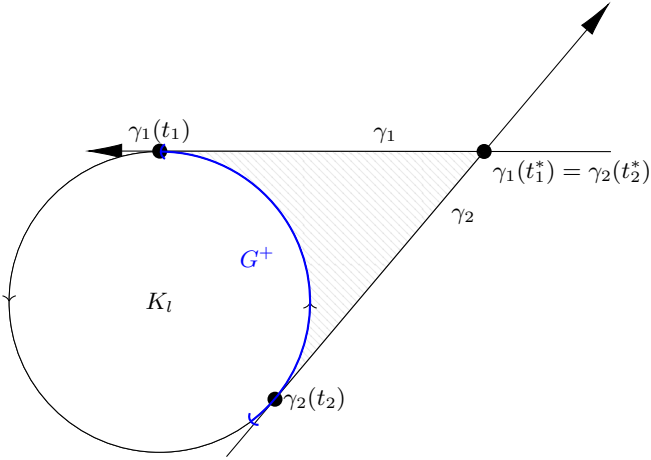
	
	\begin{proposition}\label{prop:bitangent_lower_bound}
		Suppose that $n\geq 3$. Let $K_l$ and $K_j$ be two distinct obstacles. Then there are at least 4 undirected smooth geodesics which are tangent to both $K_l$ and $K_j$.
	\end{proposition}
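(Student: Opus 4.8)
I will reuse the machinery of \Cref{prop:bitangent_upper_bound}. Fix the anti-clockwise parameterisation $k_l:\mathbb{S}^1\to S$ of $\partial K_l$ and, for each $s\in\mathbb{S}^1$, let $\gamma_s$ be the maximal smooth geodesic in $S$ with initial data $(k_l(s),k_l'(s))$, extended to both endpoints on $\partial S$ exactly as $F,\widetilde F$ were built there. Since $K_l$ is strictly convex, $\gamma_s$ is tangent to $\partial K_l$ at the single point $k_l(s)$, and by \Cref{lemma:twocomponents} it splits $S$ into two components, with $K_l$ contained in the closed ``left'' component (the side of the inward normal $k_l'(s)$ rotated by $+\pi/2$). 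For $p\in S$ let $\delta(p,s)$ denote the geodesic distance from $p$ to $\gamma_s$, taken positive on the left component and negative on the right, and set $g^+(s)=\min_{p\in K_j}\delta(p,s)$ and $g^-(s)=\max_{p\in K_j}\delta(p,s)$. Both are continuous on $\mathbb{S}^1$ ($K_j$ compact, $\gamma_s$ varying smoothly), and $g^-\ge g^+$. The sign dictionary is: $g^+(s)>0$ iff $K_j$ lies strictly left of $\gamma_s$; $g^-(s)<0$ iff $K_j$ lies strictly right; $g^+(s)=0$ iff $\gamma_s$ is tangent to $K_j$ with $K_j$ weakly on the \emph{same} side as $K_l$ (an \emph{outer} common tangent); and $g^-(s)=0$ iff $\gamma_s$ is tangent to $K_j$ with $K_j$ on the \emph{opposite} side (an \emph{inner} common tangent).

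The plan is then to show that $g^+$ and $g^-$ each change sign on $\mathbb{S}^1$. For the positive values I would pass to the geodesic convex hull of $K_l\cup K_j$ (well defined since $S$ is geodesically convex with $\mathrm{diam}(S)<\rho$): its boundary consists of an arc of $\partial K_l$, an arc of $\partial K_j$, and two bridging geodesic segments. At any $s$ lying in the open ``far'' arc of $\partial K_l$ appearing on the hull boundary, $\gamma_s$ supports the whole hull, so $K_j$ is strictly left and $g^+(s)>0$ (hence also $g^-(s)>0$). For the negative values I must exhibit a tangent geodesic to $K_l$ that strictly separates it from $K_j$. Here I would take the points $p^\ast\in\partial K_l$, $q^\ast\in K_j$ realising $d(K_l,K_j)>0$; the minimizing connector meets $\partial K_l$ perpendicularly, so the tangent geodesic $\gamma^\ast$ at $p^\ast$ is orthogonal to it and has $K_l$ on its left. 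Using strict convexity of $K_j$ together with the standing fact that two non-reflecting geodesics in $S$ meet at most once, I claim all of $K_j$ lies strictly to the right of $\gamma^\ast$, giving $g^-(\gamma^\ast)<0$ (hence $g^+(\gamma^\ast)<0$).

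With both sign changes in hand, a continuous function on $\mathbb{S}^1$ assuming both a positive and a negative value vanishes at least twice; applied to $g^+$ this yields at least two outer common tangents, and applied to $g^-$ at least two inner common tangents. These four oriented geodesics are all tangent to $\partial K_l$ with $K_l$ on the left, so no two coincide: the two inner tangents carry $K_j$ on the opposite side from the two outer ones, and within each pair the two zeros occur at distinct parameters $s$, hence at distinct tangency points on $\partial K_l$; by \Cref{prop:bitangent_upper_bound} the common tangents are finite in number, so these zeros are isolated and genuinely distinct. Since each of the four is tangent to $\partial K_l$ once with a fixed orientation, they descend to four distinct undirected geodesics tangent to both $K_l$ and $K_j$. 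Finally, each is tangent to (hence meets) only $K_l$ and $K_j$, so by the general position hypothesis it meets no third component and is an admissible smooth geodesic of the configuration.

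The main obstacle is the separation step, i.e.\ proving that the tangent geodesic $\gamma^\ast$ at the nearest point has \emph{all} of $K_j$ strictly on the far side, rather than merely the foot point $q^\ast$. In the Euclidean model this is immediate from parallel supporting lines, but on $M$ the supporting geodesic of $K_j$ at $q^\ast$ and the geodesic $\gamma^\ast$ are two distinct geodesics sharing a common perpendicular, and one must rule out their crossing within $S$; this is precisely where the curvature conditions ($\kappa_S<0$, or the pinching $\sqrt{\kappa_S}\tan(\rho\kappa_S)<\kappa_K$ with $\rho\sqrt{\kappa_S}<\pi/2$) and the injectivity-radius bound $\mathrm{diam}(S)<\rho$ enter, forcing the two supporting geodesics to stay disjoint across $K_j$ and thereby confining $K_j$ to the right of $\gamma^\ast$.
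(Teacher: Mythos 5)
Your strategy is genuinely different from the paper's, and it contains a gap that you flag yourself but do not close. The paper's proof leans on the hypothesis $n\geq 3$: it picks a point $q$ on a third obstacle $K_m$, draws the geodesic through $q$ tangent to $K_l$, and uses general position to conclude that this tangent misses $K_j$ entirely; it then runs a continuation (implicit function theorem) argument on the map sending a tangency point of $\partial K_l$ to the first transversal intersection with $\partial K_j$ to force at least one bitangent, rules out uniqueness by re-using the $G^{\pm}(\gamma_1)$ machinery of \Cref{prop:bitangent_upper_bound} in two cases (according to whether $K_j$ lies in $\tilde S_1$ or $\tilde S_2$), and finally doubles the count by reversing orientation. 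Your intermediate-value argument on the signed distances $g^{\pm}$ would, if completed, be cleaner and would not even use $n\geq 3$ --- which should itself have been a warning sign, since the paper explicitly declines to assert the lower bound when $n=2$ and organises its whole proof around the third obstacle.

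The concrete gap is the inner-tangent half. Producing a parameter $s$ with $g^-(s)<0$ requires showing that the tangent geodesic $\gamma^\ast$ to $K_l$ at the foot $p^\ast$ of the minimizing connector has \emph{all} of $K_j$ strictly on the far side, and you only assert that the curvature hypotheses ``force'' this. That claim is the entire content of the existence of inner common tangents: without it $g^-$ is not shown to change sign and you obtain at most the two outer tangents. In positive curvature two geodesics perpendicular to a common segment can cross, so a genuine comparison-geometry estimate tied to $\rho\sqrt{\kappa_S}<\pi/2$ and $\mathrm{diam}(S)<\rho$ is needed, and nothing in your text supplies it. The outer-tangent half is also under-justified: you describe the boundary of the geodesic convex hull of $K_l\cup K_j$ as two boundary arcs joined by ``two bridging geodesic segments'', but those bridges \emph{are} the outer common tangents, so the description assumes part of what is to be proved; what you actually need (and can obtain, e.g., from an extreme point of the hull lying on $\partial K_l$) is merely one point of $\partial K_l$ on the hull boundary at which the tangent to $\partial K_l$ supports the hull. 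As written, neither sign change is established rigorously, so the proof is incomplete.
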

	\begin{proof}
		First, we show that there exists at least one smooth geodesic tangent to $\partial K_l$ which does not intersect $\partial K_j$. Let $K_m$ be any third obstacle distinct from both $K_l$ and $K_j$. Pick any point $q\in \partial K_m$ and take a normal coordinate chart $\psi: U\to V\subseteq\reals^2$ about $q$ such that $S\subseteq U$. Then there is some straight line $L\subset V$ intersecting $\psi(\partial K_m)$ at $\psi(q)$, which is tangent to $\psi(\partial K_l)$. This line corresponds to a smooth geodesic $\gamma = \psi^{-1}(L)$ through $q$ which is tangent to $\partial K_l$ in $U$. Note that $\gamma$ may intersect $\partial K_m$ more than once. Since our obstacles are in general position (by assumption), it follows that $\gamma$ cannot intersect any other obstacle apart from $K_l$ and $K_m$. Thus $\gamma$ is a smooth geodesic tangent to $\partial K_l$ which does not intersect $\partial K_j$. Note that by the same argument there is some smooth geodesic $\gamma'$ which intersects $\partial K_j$ and is tangent to $\partial K_l$.
		
		Parameterise $\partial K_l$ in an anti-clockwise fashion as $k_l(s):\mathbb{S}^1\to\partial K_l$. Define the function $E:T\partial K_l\times\reals\to TS$ by $(s,t)\mapsto \flow_t(k_l(s),k_l'(s))$.
		By the argument in the previous paragraph, there is some $q\in\partial K_j$ and a smooth geodesic $\gamma$ through $q$ which is tangent to $\partial K_l$. By the implicit function theorem there are open sets $W\subseteq\partial K_l$ and $Z\subseteq\partial K_j$ such that $q\in Z$, along with a unique diffeomorphism $\phi:W\to Z$ such that the smooth geodesic starting  at $k_l(s)=w\in W$ in the direction $k_l'(s)$ first intersects $\partial K_j$ at $\phi(w)$. That is, if $t(s)>0$ is the distance between $w$ and $\phi(w)$ then $\phi(w) = \pi_1\circ E(s,t(s))$. Furthermore the intersections of the geodesics $F_s(t) = \pi_1\circ E(s,t)$ with $\partial K_l$ at $\phi(k(s))$ are transversal. Provided that $F_s(t)$ remains transversal to $\partial K$ for all $s\in k_l^{-1}(W)$, we can extend $\phi$ by expanding $Z$ and hence $W$ (see the proof of \Cref{lemma:singlereflection} for this construction). Suppose that there are no smooth bitangent geodesics which are tangent to both $\partial K_l$ and $\partial K_j$. It follows that $Z = \partial K_j$, and since $W$ is diffeomorphic to $Z$, it follows that $W = \partial K_l$. But then for every point $w\in\partial K_l$ the geodesic $F_{k_l^{-1}(w)}$, tangent to $\partial K_l$ will intersect $\partial K_j$ at $\phi(w)$ (transversally) by construction. This contradicts our argument above, since there must be some smooth geodesic tangent to $K_l$ which does not intersect $K_j$. Hence there must be at least one bitangent smooth geodesic, tangent to both $\partial K_l$ and $\partial K_j$.
		
		Suppose that there is only one bitangent smooth geodesic, $\alpha^*:[0,1]\to S$, tangent to both $\partial K_l$ and $\partial K_j$. Let $p\in\partial K_j$ be the point of tangency of $\alpha^*$, it follows that $Z = \partial K_j\backslash\{p\}$. Parameterise $W$ in the anti-clockwise direction as $w:(0,1)\to W$. Let $w^* = \lim\limits_{s\to 1}w(s)$, then $p = \lim\limits_{w\to w^*}\phi(w)$. Note that since there is only one point of tangency, we must also have $\lim\limits_{s\to 0}w(s) = w^*$. Now taking a normal coordinate chart $\psi$ as before, centred about $w^*$ we have that $\psi(\alpha^*([0,1]))$ is a straight line, which we may extend in both directions until intersecting $\psi(S)$ at both ends. Denote this extended smooth geodesic by $\widetilde{\alpha^*}:[a,b]\to S$. Then by \Cref{lemma:twocomponents}, $S\backslash\widetilde{\alpha^*}([a,b])$ has two connected components. Denote these two components by $\psi(\tilde S_1)$ and $\psi(\tilde S_2)$. Suppose that $K_l$ and $K_j$ are both in the same component $\tilde S_1$. 
		Re-parameterise $\widetilde{\alpha^*}$ as $\gamma_1:[0,1]\to S$, so that $\pdiff{s}\widetilde{\alpha^*}(a) = \lambda\dot\gamma_1(0)$ for some $\lambda > 0$. Then using the same notation as in \Cref{prop:bitangent_upper_bound}, consider the set $G^+(\gamma_1)$ (\Cref{fig:gplusminus_positive}). Recall that for each $s\in\mathbb{S}^1$ there is some $\omega_s>0$ such that $G_s(\omega_s)\in\partial K_l$ is the point of tangency of $G_s$ with $\partial K_l$.
		Also let $t'_1>t_1>0$ be such that $\gamma_1(t_1)\in\partial K_l$ and $\gamma_1(t'_1)\in\partial K_l$ are the points of tangency of $\gamma_1$ with $\partial K_l$ and $\partial K_j$ respectively. As shown in \Cref{prop:bitangent_upper_bound}, for any $G_s\in G^+(\gamma_1)$, the intersection $z(G_s)$ between $G_s$ and $\gamma_1$ must lie in $\gamma_1([0,t_1])$. Now since $Z = \partial K_j\backslash\{p\}$, it follows that for every $G_s\in G^+(\gamma_1)$, the segment $G_s([\omega_s,1])$ intersects $\partial K_j$ transversally. Let $1>\sigma_s>\theta_s>\omega_s$ be such that $G_s(\sigma_s)\in\partial K_j$ and $G_s(\theta_s)=z(G_s)$ are the points of intersection of $G_s$ with $\partial K_j$ and $\gamma_1$ respectively. Then $G_s([\omega_s,\theta_s))\subseteq \tilde S_1$, while $G_s((\theta_s,\sigma_s])\cap\tilde S_i\neq\emptyset$ for $i=1,2$. It follows that $G_s(\theta_s,\sigma_s])$ must also intersect $\gamma_1([0,1])$. Thus $G_s$ intersects $\gamma_1$ more than once, leading to a contradiction. 
		Now suppose that $K_l$ and $K_j$ are in two separate components, $\partial K_l\subseteq\overline{\tilde S_1}$ and $\partial K_j\subseteq\overline{\tilde S_2}$. Once again using the same notation as \Cref{prop:bitangent_upper_bound}, consider the set $G^-(\gamma_1)$ (\Cref{fig:gplusminus_negative}). Let $G_s(\omega_s)\in\partial K_l$, $G_s(\sigma_s)\in\partial K_j$ and $G_s(\theta_s)=z(G_s)$ denote the same points as in the previous case. Then by construction, $G_s([\omega_s,1])\cap S_i\neq\emptyset$ for $i=1,2$, since $\omega_s<\sigma_s$ and $\partial K_l$ and $\partial K_j$ are in different components. However, as shown in \Cref{prop:bitangent_upper_bound}, $z(G_s)\in G_s([0,\omega_s])$. That is, $0<\theta_s<\omega_s<\sigma_s$. So once again, $G_s$ must intersect $\gamma_1$ at two distinct points, leading to a contradiction.
		
		Hence there must be at least two bitangent smooth geodesics when parameterising in the anti-clockwise direction. Taking the parameterisation in the clockwise direction we find at least two  bitangents once again. We claim that the pairs of bitangents must be distinct. Suppose that $\gamma^+(t)$ is a bitangent smooth geodesic starting at $k_l(s)\in\partial K_l$ with initial direction $k_l'(s)$. Let $\gamma^{-}(t)$ be the smooth geodesic starting at $k_l(s)$ with initial direction $-k_l'(s)$. Define $\gamma(t) = \gamma^+(t)$ for $t\geq 0$ and $\gamma(t)=\gamma^{-}(-t)$ for $t<0$. Now since $K_j$ is strictly convex, and $\gamma(t)$ is tangent to $\partial K_j$ at $\phi(k_l(s))$, the smooth geodesic $\gamma(t)$ must intersect $\partial K_j$ exactly once. Since $\gamma(t)$ cannot self intersect, it follows that $\gamma^{-}(t)$ is not tangent to $\partial K_j$, and in fact does not intersect $\partial K_j$ anywhere. Hence the four bitangent smooth geodesics must be distinct.
	\end{proof}
	
	Let $\sT_i^j\subset \sT$ be the set of travelling times generated by geodesics which reflect off $\partial K$ exactly $i$ times and are tangent to exactly $j$ connected components of $K$. Since we have assumed that $K$ is in general position, it follows that $\sT_i^j = \emptyset$ for all $j\geq 3$. Consider the set of travelling times $\sT_0$ which are generated by smooth geodesics. Then we know $\sT_0 = \cup_{j=0}^{2}\sT^j_0$.
	
	\begin{corollary}\label{lemma:number_of_obstacles}
		Suppose that $n\geq 3$. The set $\sT_0^2$ contains exactly $4n(n-1)$ discrete points, and $\sT_0^1$ is the union of $4n(n-1)$ open arcs.
	\end{corollary}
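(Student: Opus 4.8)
The plan is to first combine the two preceding propositions to pin down the exact number of bitangent smooth geodesics, and then translate this count separately into the discrete points of $\sT_0^2$ and the open arcs of $\sT_0^1$, being careful throughout to account for both orientations of each undirected geodesic.

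First I would invoke \Cref{prop:bitangent_upper_bound} and \Cref{prop:bitangent_lower_bound} together: for every unordered pair of distinct components $\{K_l,K_j\}$ there are \emph{exactly} $4$ undirected smooth geodesics tangent to both. Since there are $\binom{n}{2}=\tfrac{n(n-1)}{2}$ such pairs, this yields $2n(n-1)$ undirected bitangent geodesics in total. General position guarantees these are all distinct (a smooth geodesic meeting three components is forbidden) and that each is tangent to exactly two components while meeting no other obstacle, so each is a genuine non-reflecting generalised geodesic running from $\partial S$ to $\partial S$ in $S_K$ and contributing a travelling time. Because $\text{diam}(S)<\rho$ forces the smooth geodesic between two boundary points to be unique, distinct bitangents have distinct endpoint pairs, and orienting each undirected bitangent in its two directions produces two distinct triples $(x,y,t)$ and $(y,x,t)$ in $\sT$. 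This gives exactly $2\cdot 2n(n-1)=4n(n-1)$ points, all lying in $\sT_0^2$; being finite in number, they form a discrete set.

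For $\sT_0^1$ I would work one component at a time. Fix $K_l$ and parameterise the undirected smooth geodesics tangent to $\partial K_l$ by their point of tangency, identifying this family with $\partial K_l\cong\mathbb{S}^1$; each extends, by completeness and convexity of $S$, to a non-reflecting geodesic between two points of $\partial S$. The $4(n-1)$ bitangency points from the previous paragraph cut this circle into $4(n-1)$ arcs, and the ``type'' of a tangent geodesic (whether, and which, further component it meets) is constant on each arc and changes only at a bitangency point. Crossing a bitangent with $K_j$ toggles whether the geodesic meets $K_j$; since there are four such bitangents for the pair and the meeting status alternates around the circle, exactly two of the arcs consist of geodesics meeting $K_j$. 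By general position no tangent geodesic to $K_l$ can meet two further components, so the arc-sets associated to distinct $K_j$ are disjoint; summing, $2(n-1)$ arcs are blocked and the remaining $4(n-1)-2(n-1)=2(n-1)$ arcs consist of geodesics tangent to $K_l$ alone. Doubling for the two orientations and summing over the $n$ components gives $n\cdot 2\cdot 2(n-1)=4n(n-1)$ open arcs in $\sT_0^1$; the arcs are open because their endpoints are exactly the bitangent travelling times already assigned to $\sT_0^2$.

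The main obstacle I expect is justifying that, for a fixed pair $\{K_l,K_j\}$, exactly two of the four arcs cut out on $\partial K_l$ consist of tangent geodesics which actually meet $K_j$ — that is, the clean alternation of meeting status around the circle. I would establish this by passing to a normal coordinate chart about a point of $K_l$ containing $S$ (available since $\text{diam}(S)<\rho$), in which geodesics become straight lines and both components remain convex, and then arguing that tangency to the convex curve $\partial K_j$ is precisely the transition between a tangent line meeting $K_j$ in a chord and missing it; with four common tangents this forces two meeting arcs and two missing arcs. The remaining steps — realising each arc of tangency parameters as an arc of triples in $\sT$ via a map of the kind constructed in \Cref{lemma:singlereflection}, and checking injectivity so that distinct arcs do not overlap — are then routine, using that two non-reflecting geodesics in $S$ meet at most once.
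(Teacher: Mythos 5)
Your route is essentially the paper's: combine \Cref{prop:bitangent_upper_bound,prop:bitangent_lower_bound} to get exactly four undirected bitangents per unordered pair, realise the points of $\sT_0^2$ as the $4n(n-1)$ directed versions of the resulting $2n(n-1)$ undirected geodesics, and obtain the arcs of $\sT_0^1$ by cutting each $\partial K_l$ at its $4(n-1)$ double-tangency parameters. In fact your bookkeeping is more careful than the printed proof, which asserts that \emph{every} one of the $4(n-1)$ boundary arcs of $\partial K_l$ generates geodesics tangent only to $K_l$ and never doubles for orientation; as you note, $2(n-1)$ of those arcs generate tangent geodesics that cross another component transversally and so contribute nothing to $\sT_0$, and the correct count is $2(n-1)$ free arcs per component times two orientations. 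The two omissions in the printed proof cancel; your version makes the cancellation explicit, which is an improvement.

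The one step that fails as written is your justification of the alternation claim -- that for a fixed pair exactly two of the four arcs cut out on $\partial K_l$ by the $K_j$-bitangency parameters consist of tangent geodesics meeting $K_j$. You propose a normal coordinate chart containing $S$ ``in which geodesics become straight lines'', but in a normal chart centred at $p$ only the geodesics \emph{through} $p$ are straight; the whole tangent family to $\partial K_l$ is not, and $\partial K_j$ need not be convex for the chart's affine structure. (The paper itself only ever straightens a single geodesic through the chart's centre, as in \Cref{lemma:twocomponents}.) The claim is genuinely load-bearing: if some bitangency parameter were isolated in, or interior to, the set $A=\{s:\text{the tangent geodesic at }k_l(s)\text{ meets }K_j\}$, your count of free arcs would change. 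A correct argument: $A$ is closed; its boundary consists exactly of the four bitangency parameters, since meeting $\text{Int}\,K_j$ is an open condition and a boundary point of $A$ must therefore correspond to a geodesic tangent to $\partial K_j$; $A\neq\mathbb{S}^1$ by the first paragraph of \Cref{prop:bitangent_lower_bound}; and no bitangency parameter is isolated in $A$ or interior to it, because the normal component of the variation field of the tangent family is a nontrivial Jacobi field vanishing at the $K_l$-tangency, hence nonzero at the $K_j$-tangency (no conjugate points under the stated curvature hypotheses), so the family sweeps transversally across the point of tangency with $\partial K_j$ and enters $\text{Int}\,K_j$ for $s$ on exactly one side. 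With all four bitangency parameters thus genuine boundary points of $A$, the two components of $A$ must be non-adjacent among the four arcs, which is the alternation you need; the rest of your proof then goes through.
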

	\begin{proof}	
		Each obstacle has exactly $4(n-1)$ geodesics which are tangent to both it and another obstacle. In total this gives $4n(n-1)$ such (directed) geodesics. Furthermore, each obstacle has $4(n-1)$ points of double-tangency. Every pair of successive points determines an open arc of points along the boundary of the obstacle which generate geodesics tangent only to that obstacle. Every such arc determines an open arc of travelling times, and the union of these open arcs is exactly $\sT_0^1$. Therefore $\sT_0^1$ is the union of $4n(n-1)$ disjoint open arcs.
	\end{proof}
	\Cref{lemma:number_of_obstacles} allows us to determine the number of obstacles $n$ directly from the travelling times in a rather practical manner. Computing $\sT_0^2$ requires a minimal amount of data in comparison to computing the sets $\sT_i^1$ which are used to recover $K$. The amount of data required to identify $\sT_i^1$ increases by an order of magnitude for each $i\geq 0$ (see \Cref{example:echograph}). Note that in the case where there are only two obstacles ($n=2$), only the upper bound given in \Cref{prop:bitangent_upper_bound} holds. However, this is sufficient to conclude there are only two obstacles, since there will be at most 8 points in $\sT_0^2$.
	
	\section{The structure of the set of travelling times $\sT$}
	
	We define the map \[I:\T{S_K}\to \T{S_K}\]
	\begin{equation}
 		I(x,\omega) = (x,\omega-2\left<\omega,v_x \right> v_x)
 	\end{equation}
	where $v_x$ is the unit normal to $\partial S_K$ at $x$. This is the inversion map by symmetry to the normal to the boundary. Note that $I$ is a diffeomorphism.	
	
	\begin{lemma}\label{lemma:multireflection}
		Let $\gamma$ be a generalised geodesic in $S_K$ generated by $(x_0,\omega_0)\in \T{S_K}$ with successive reflection points $x_1,\dots,x_n$ on $\partial S_K$. Then there are neighbourhoods $W$ of $(x_0,\omega_0)\in \T{S_K}$, and $V_i$ of $x_i$ in $\partial S_K$ and unique smooth maps \[{x_i(x,\omega):W\to V_i}\] such that any generalised geodesic generated by $(x,\omega)\in W$ will have successive reflection points $x_i(x,\omega)$.
	\end{lemma}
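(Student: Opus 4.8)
The plan is to induct on the number of reflections $n$, propagating the trajectory smoothly across each smooth geodesic arc by means of \Cref{lemma:singlereflection} and handling each reflection by means of the inversion map $I$. At every reflection I keep track of two pieces of data that depend smoothly on the initial state: the reflection point $x_i\in\partial S_K$ and the outgoing (post-reflection) state $\zeta_i\in\T{S_K}$.

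First I would fix the inductive data. Set $\zeta_0(x,\omega)=(x,\omega)$, so that $x_0(x,\omega)=x$ and $\zeta_0(x_0,\omega_0)$ is the initial state of $\gamma$. Assume inductively that on some neighbourhood $W_i$ of $(x_0,\omega_0)$ in $\T{S_K}$ I have smooth maps $x_j:W_i\to\partial S_K$ for $j\le i$ together with a smooth outgoing-state map $\zeta_i:W_i\to\T{S_K}$, where $\zeta_i(x_0,\omega_0)$ reproduces the post-reflection state of $\gamma$ at $x_i$. By the definition of successive reflection points, the arc of $\gamma$ between $x_i$ and $x_{i+1}$ is a smooth geodesic meeting $\partial S_K$ only at its two endpoints, and because $x_{i+1}$ is a genuine reflection it meets $\partial S_K$ transversally there; hence this arc is nowhere tangent to $\partial S_K$.

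These are exactly the hypotheses of \Cref{lemma:singlereflection} at the state $\zeta_i(x_0,\omega_0)$, which therefore supplies smooth maps $y$ and $\tau$ assigning the next boundary-hitting point and the travel time to it. Composing with $\zeta_i$ gives, on a shrunken neighbourhood, the smooth maps $x_{i+1}(x,\omega)=y(\zeta_i(x,\omega))$ and $\tau_{i+1}(x,\omega)$. The incoming state at $x_{i+1}$ is $\flow_{\tau_{i+1}(x,\omega)}(\zeta_i(x,\omega))$, which is smooth in $(x,\omega)$; applying the diffeomorphism $I$ produces the outgoing state $\zeta_{i+1}(x,\omega)=I\bigl(\flow_{\tau_{i+1}(x,\omega)}(\zeta_i(x,\omega))\bigr)$, again smooth. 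Iterating $n$ times and taking $W=W_n$ (the intersection of all the neighbourhoods produced along the way) yields the desired maps $x_1,\dots,x_n$. Uniqueness is inherited step by step: the next hitting point is the unique value produced by the implicit function theorem in \Cref{lemma:singlereflection}, and the reflected direction is forced by $I$, so the whole reflection sequence is the unique one compatible with the billiard dynamics near $\gamma$.

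The main obstacle I anticipate is verifying that, after shrinking $W$, every $(x,\omega)\in W$ really does generate a generalised geodesic whose reflection points are exactly $x_1(x,\omega),\dots,x_n(x,\omega)$, with none created, lost, or degenerating into a tangency. I would control this using the openness built into \Cref{lemma:singlereflection} together with the standing geometric hypotheses: strict convexity of each $K_i$ keeps every reflection transversal and confines each hitting point to the same boundary component, while the observation that $\text{diam}(S)<\rho$ places $S$ inside a single normal coordinate neighbourhood, in which two non-reflecting geodesics meet at most once and so no spurious extra intersection with $\partial S_K$ can appear. Since only finitely many shrinkings are required, the full combinatorial type of the reflection sequence persists on $W$.
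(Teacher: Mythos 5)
Your proposal is correct and follows essentially the same route as the paper: iterate \Cref{lemma:singlereflection} along the smooth arcs, interleave the diffeomorphism $I$ at each reflection to pass from the incoming to the outgoing state, and shrink the neighbourhoods finitely many times so the composed maps (your $\zeta_i$, the paper's $I\circ X_i$) remain well defined; the reflection points are then the base-point projections of these maps. Your additional remarks on transversality at genuine reflection points and on the persistence of the combinatorial type are sensible elaborations of what the paper leaves implicit, not a different argument.
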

	\begin{proof}
		By \Cref{lemma:singlereflection} there exist a neighbourhood $W$ of $(x_0,\omega_0)$ and a map $\tau_1$ from $W$ to $\reals^+$ such that $\flow_{\tau_1(x,\omega)}$ is a diffeomorphism from $W$ onto a neighbourhood $U_1$ of $(x_1,\omega_1)$ along the boundary $\partial K$.	 Now by \Cref{lemma:singlereflection}, for each $i=1,\dots,n$ there is a neighbourhood ${\widetilde U_{i-1}\subseteq I(U_{i-1})}$ and a map $\tau_i:\widetilde U_{i-1}\to \reals^+$ such that $\flow_{\tau_i(x,\omega)}$ is a diffeomorphism from $\widetilde U_{i-1}$ onto a neighbourhood $U_i$ of $(x_i,\omega_i)$ along the boundary $\partial K$. We shrink each $U_i$ so that $I(U_i) = \widetilde U_i$, and shrink $W$ so that $\flow_{\tau_1(x,\omega)}(W)=U_1$.
		Now define maps $X_i:W\to U_i$ recursively as follows:
		\begin{align}
	 		&X_1(x,\omega) = \flow_{\tau_1(x,\omega)}(x,\omega)\\
			&X_i(x,\omega) = \flow_{\tau_i(I\circ X_{i-1}(x,\omega))}(I\circ X_{i-1}(x,\omega))\label{eq:reCref}
		\end{align}
		Note that each $X_i$ is a diffeomorphism. Finally, the desired maps are 
		\begin{equation*}
			x_i = \pi_1\circ X_i:W\to\pi_1(U_i)\qedhere
		\end{equation*}
	\end{proof}
	We will use \Cref{thm:timeangle,lemma:distinct_times,thm:convexlemma,thm:confrontorth} often, see \cite{GNS2020Published} for their proofs:
	\begin{lemma}\label{thm:timeangle}
		Suppose that $\gamma$ is a non-trapped generalised geodesic in $S_K$ from $x\in \partial S$ to $y\in\partial S$. Then $grad_x T = -\dot\gamma(t_0)/\norm{\dot\gamma(t_0)}$, where $T(x,y)$ is the length of the geodesic $\gamma$.
	\end{lemma}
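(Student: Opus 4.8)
The plan is to prove this via the first variation of arc length for billiard (generalised geodesic) trajectories, treating $x$ as a point of $M$ that we are free to move in an arbitrary direction while holding the terminal point $y$ fixed. Concretely, I would fix an arbitrary $V\in T_xM$, choose a curve $c(r)$ in $M$ with $c(0)=x$ and $\dot c(0)=V$, and show that $\frac{d}{dr}\big|_{r=0}T(c(r),y)=-\langle V,\dot\gamma(t_0)\rangle/\norm{\dot\gamma(t_0)}$. Since $V$ is arbitrary, the definition of the gradient then yields $grad_x T=-\dot\gamma(t_0)/\norm{\dot\gamma(t_0)}$. The first step is to record that, because $\gamma$ is non-trapped, it has finitely many reflection points $x_1,\dots,x_n$ on $\partial K$ splitting $\gamma$ into smooth geodesic arcs; write $x_0=x$ and $x_{n+1}=y$.

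Second, I would produce the variation. For $r$ small there is a unique nearby generalised geodesic $\gamma_r$ from $c(r)$ to $y$ with exactly $n$ reflection points $x_i(r)\in\partial K$, each depending smoothly on $r$ and with $x_i(0)=x_i$. This is where the earlier machinery enters: every reflection of $\gamma$ is transversal (a genuine reflection meets $\partial K$ non-tangentially), so the reflection points persist and move smoothly by the implicit function theorem, exactly as in the construction behind \Cref{lemma:singlereflection} and \Cref{lemma:multireflection}; non-trapping guarantees that the trajectory reaches $y$ after finitely many reflections, and the convexity hypotheses on $S$ and $K$ supply the non-degeneracy needed for the family, and hence $T(c(r),y)$, to be genuinely smooth in $r$. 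I expect this smooth-dependence step to be the main obstacle, since it is what makes the naive variational computation rigorous; it is essentially the content already assembled in the preceding lemmas.

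Third comes the computation itself. Differentiating the length of a single smooth geodesic arc from $x_{i-1}(r)$ to $x_i(r)$ produces only the endpoint terms of the first variation formula, because the interior term $\int\langle V,\nabla_{\dot\gamma}\dot\gamma\rangle$ vanishes on a geodesic and the speed is constant. Summing over the arcs and collecting the contributions at each interior reflection point $x_i$, the term from the incoming arc and the term from the outgoing arc combine into $\langle W_i,\,e_i^{-}-e_i^{+}\rangle$, where $W_i=\frac{d}{dr}\big|_{r=0}x_i(r)$ is the variation field there and $e_i^{-},e_i^{+}$ are the unit arrival and departure directions. By the reflection law—equivalently, by the definition of the inversion map $I$—one has $e_i^{+}=e_i^{-}-2\langle e_i^{-},v_{x_i}\rangle v_{x_i}$, so $e_i^{-}-e_i^{+}$ is normal to $\partial K$ at $x_i$; since $x_i(r)$ stays on $\partial K$, its variation $W_i$ is tangent to $\partial K$, and the inner product vanishes. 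Thus every reflection point contributes nothing, and the terminal contribution at $y$ vanishes because $y$ is held fixed.

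Finally, only the boundary term at $x_0=x$ survives, namely $-\langle V,\dot\gamma(t_0)\rangle/\norm{\dot\gamma(t_0)}=\langle V,-\dot\gamma(t_0)/\norm{\dot\gamma(t_0)}\rangle$. As $V\in T_xM$ was arbitrary this identifies the gradient and finishes the argument. The only genuinely delicate point, as noted, is the smoothness of $r\mapsto T(c(r),y)$ together with that of the reflection points; granting it, the reflection law doing the cancellation at each $x_i$ is the conceptual heart of the proof, and it is precisely where the billiard reflection condition (rather than an arbitrary broken path) is used.
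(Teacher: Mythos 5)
The paper does not actually prove this lemma: it is imported from \cite{GNS2020Published} (``see \cite{GNS2020Published} for their proofs''), so there is no internal argument to compare yours against. On its own terms, your proof is the standard first-variation / envelope argument for billiard trajectories, and its computational core is correct: each smooth arc contributes only endpoint terms of the first variation, the contribution $\langle W_i,\,e_i^{-}-e_i^{+}\rangle$ at an interior reflection vanishes because $e_i^{-}-e_i^{+}$ is normal to $\partial K$ while $W_i$ is tangent to it, the fixed endpoint $y$ kills the terminal term, and only $-\langle V,\dot\gamma(t_0)\rangle/\norm{\dot\gamma(t_0)}$ survives at $x$. Note that this computes the gradient of the travelling time regarded as a function on a two-dimensional neighbourhood of $x$ in $S$, which is indeed the sense in which the lemma is used later (cf.\ \Cref{lemma:travelling_time_function}, where $-\nabla\tau(x)$ is fed back in as an initial direction pointing into $S$).

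Two points in your write-up need more than a gesture. First, \Cref{lemma:singlereflection,lemma:multireflection} give smooth dependence of the reflection points on the \emph{initial data} $(x,\omega)$; they do not by themselves produce your family $\gamma_r$ from $c(r)$ to the \emph{fixed} endpoint $y$. You still have to solve the shooting problem for the initial direction $\omega(r)$, i.e.\ show that the level set $\{\pi_1\circ\mathcal{P}=y\}$ is locally a smooth graph over the footpoint $x$; this is precisely the codimension-one submanifold argument carried out (independently of the present lemma, so there is no circularity) in the proof of \Cref{lemma:travelling_time_function}, using \Cref{lemma:distinct_times}. Alternatively, you can avoid shooting altogether by writing $T$ as the broken-length functional of the reflection points and noting that the billiard configuration is a critical point in those variables, so that only the explicit $x$-derivative survives. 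Second, your blanket assertion that every reflection of $\gamma$ is transversal silently excludes trajectories tangent to $\partial K$. That exclusion is genuinely needed --- at a tangency the travelling time is only one-sidedly differentiable, which is exactly why the arcs $\alpha_i$ of \Cref{lemma:open_arc_generators} terminate there --- so it should appear as a hypothesis of your argument rather than as an unexamined assumption.
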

	\begin{lemma}\label{lemma:distinct_times}
		Fix $x_0\in \partial S$. The set of pairs of distinct directions $\omega_1,\omega_2\in \T{S}_{x_0}$ which generate generalised geodesics with  the same endpointst and the same travelling time is countable.
	\end{lemma}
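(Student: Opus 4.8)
The plan is to reduce the count to countably many smooth ``branches'' on which the exit data vary smoothly, and then to show that any two branches meet, in the plane of (endpoint, travelling time), in a discrete set. First I would stratify the fibre $\T{S}_{x_0}$ of directions at $x_0$ by the combinatorial type of the generated generalised geodesic (the number of reflections together with the ordered list of components of $K$ that are struck). Trapped directions generate no endpoint and are discarded, and for a non-trapped direction the combinatorial type is locally constant; as the reflection number ranges over the positive integers this produces countably many open strata $\Sigma_k\subseteq\T{S}_{x_0}$ and a complementary singular set $\mathcal B$ of directions whose trajectory grazes some $\partial K_i$. By \Cref{lemma:multireflection} the exit point $y_k:\Sigma_k\to\partial S$ and travelling time $T_k:\Sigma_k\to\reals^+$ are smooth on each $\Sigma_k$. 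Every collision pair has its two members in strata $\Sigma_{k_1},\Sigma_{k_2}$ or in $\mathcal B$, so it suffices to show, for each of the countably many ordered pairs $(k_1,k_2)$, that the off-diagonal solutions of $y_{k_1}(\omega_1)=y_{k_2}(\omega_2)$ and $T_{k_1}(\omega_1)=T_{k_2}(\omega_2)$ form a discrete set, and separately that the pairs meeting $\mathcal B$ are countable.

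In an arclength chart $u$ of $\partial S$ about a prospective common endpoint, set $\Psi(\omega_1,\omega_2)=(u_1-u_2,\,T_1-T_2)$; the key computation is its Jacobian at a collision. Since the starting point $x_0$ is fixed, the first variation of length (the content of \Cref{thm:timeangle}, applied at the exit end) gives $T_i'=c_i\,u_i'$, where $c_i=\langle\eta_i,\partial_u\rangle$ is the tangential component of the outgoing exit velocity $\eta_i$ and $u_i'=du_i/d\omega_i$. Hence $\det D\Psi=(c_1-c_2)\,u_1'u_2'$. Off the diagonal the two trajectories are genuinely distinct, and if they left the common exit point with equal tangential velocity $c_1=c_2$ then, both $\eta_i$ being outward unit vectors at the same boundary point, we would get $\eta_1=\eta_2$; reversibility of the billiard flow would then force the trajectories, and so $\omega_1,\omega_2$, to coincide. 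Thus $c_1\neq c_2$ at every genuine collision, and $\det D\Psi$ can vanish only where $u_1'=0$ or $u_2'=0$.

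The crux, and the step I expect to be hardest, is therefore to exclude critical points of the exit maps, i.e. to prove $u_i'\neq0$ everywhere; this is exactly a no-conjugate-point statement, and it is where the curvature hypotheses are used. The geodesics leaving the point $x_0$ form a diverging (convex) wavefront, and under the standing assumptions on $\kappa_S,\kappa_K$ this convexity is preserved by each dispersing reflection on the strictly convex $\partial K_i$ (cf. \cite{MR807598,MR1022522}); a convex front never refocuses, so $\omega\mapsto y_k(\omega)$ has no conjugate points and is a local diffeomorphism on each $\Sigma_k$. Equivalently, were $y_k$ constant on a subinterval there would be a one-parameter family of distinct trajectories from $x_0$ all terminating at a single point, which preservation of convexity forbids (and which already fails on the zero-reflection stratum by the remark that two non-reflecting geodesics meet at most once). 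Granting $u_i'\neq0$, the determinant is nonzero at every off-diagonal collision, so $\Psi^{-1}(0)$ minus the diagonal is a set each of whose points is isolated in $\Sigma_{k_1}\times\Sigma_{k_2}$, and a discrete subset of this second-countable space is countable.

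It remains to dispose of $\mathcal B$. A direction is singular only if its trajectory is tangent to some $\partial K_i$ at one of its contacts; at each fixed reflection depth there are only finitely many such directions (from a point a strictly convex component admits only two tangent geodesics, and \Cref{lemma:singlereflection} propagates these loci through transversal reflections), so $\mathcal B$ is countable. For a fixed $\omega_1\in\mathcal B$ with data $(y_*,T_*)$, the partners $\omega_2$ solve $y_k(\omega_2)=y_*,\ T_k(\omega_2)=T_*$ on each stratum, a discrete set because $y_k$ is a local diffeomorphism; summing over the countably many strata and over the countable $\mathcal B$ keeps this countable. Combining the two cases over the countably many pairs of branches shows that the full set of collision pairs is countable, as required.
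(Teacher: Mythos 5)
The paper never proves \Cref{lemma:distinct_times}: it is imported, together with \Cref{thm:timeangle,thm:convexlemma,thm:confrontorth}, from \cite{GNS2020Published}, so there is no in-text argument to measure yours against. Judged on its own terms your argument is sound and essentially complete. The stratification is legitimate: a non-trapped trajectory has at most $t(\gamma)/d_K$ reflections (consecutive reflection points lie on distinct components by strict convexity), so there are countably many combinatorial types, and \Cref{lemma:multireflection} makes each stratum open with smooth exit data. The identity $T_i'=c_iu_i'$ is the correct terminal-endpoint counterpart of \Cref{thm:timeangle}, the reversibility argument showing $c_1\neq c_2$ at a genuine off-diagonal collision is valid (equal tangential components of two outward unit exit vectors at the same boundary point force the vectors, hence the backward trajectories, to coincide), and $\det D\Psi=(c_1-c_2)u_1'u_2'$ then reduces the whole lemma to the single analytic input you isolate, plus the bookkeeping that a discrete subset of a second-countable space is countable.

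That input --- $u_i'\neq0$, i.e.\ no conjugate points along any billiard trajectory issuing from the point source $x_0$ --- is genuinely load-bearing and is only gestured at, but the gesture points at the right place: it is precisely what the hypotheses $\kappa_S<0$, or $\rho\sqrt{\kappa_S}<\pi/2$ with $\sqrt{\kappa_S}\tan(\rho\kappa_S)<\kappa_K$, together with $\mathrm{diam}(S)<\rho$ and the dispersing reflections on strictly convex $\partial K_i$, are designed to guarantee (the point source spawns a strictly convex diverging front, convexity is preserved, and a diverging front never refocuses). Two smaller remarks. First, your justification for the countability of $\mathcal B$ at each reflection depth is thin as written, but it follows from the same no-focusing property: an interval of directions all tangent to a strictly convex $\partial K_i$ would make an arc of $\partial K_i$ an envelope of the front's geodesics, i.e.\ a curve of conjugate points, which is excluded. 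Second, you should note that the final segment of each trajectory meets $\partial S$ transversally (a geodesic tangent to the strictly convex $\partial S$ stays locally outside $S$, so it cannot arrive from the interior); otherwise the passage from non-vanishing of the Jacobi field to $u_i'\neq0$ would fail. With those two points made explicit the proof is complete and self-contained, arguably more so than the paper, which supplies none.
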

	\begin{lemma}
	\label{thm:convexlemma}
		Suppose $c:[a,b]\to M$ is a smooth, unit speed, strictly convex curve. For each $u_0\in [a,b]$ there exists  a smooth, strictly convex curve $y$ on a neighbourhood of $u_0$ such that $\pdiff{u}{y}(u)$ is orthogonal to the parallel translate of $\pdiff{u}{c}(u)$ along the geodesic from $c(u)$ in the direction $\pdiff{u}{c}(u)$. 
	\end{lemma}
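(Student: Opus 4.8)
The plan is to recognise the desired curve $y$ as a Riemannian \emph{involute} of $c$: the orthogonality condition says precisely that $y$ is an orthogonal trajectory of the geodesics issued tangentially from $c$, and this pins $y$ down up to a single constant of integration. To set this up I would introduce the geodesic variation $\Gamma(u,t) = \exp_{c(u)}(t\,c'(u))$ and write $\gamma_u(t) = \Gamma(u,t)$ for the unit-speed geodesic leaving $c(u)$ in the direction $c'(u)$. Since $\gamma_u$ is a geodesic, $\dot\gamma_u(t) = \Pdiff{\Gamma}{t}(u,t)$ is exactly the parallel translate of $c'(u)$ along $\gamma_u$, so the condition in the statement forces $y(u)$ to lie on $\gamma_u$ with $y'(u)\perp\dot\gamma_u$ there. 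Writing $y(u) = \Gamma(u,\ell(u))$ for an unknown smooth $\ell$ gives
\[
y'(u) = J_u(\ell(u)) + \ell'(u)\,\dot\gamma_u(\ell(u)),
\]
where $J_u(t) := \Pdiff{\Gamma}{u}(u,t)$ is the Jacobi field along $\gamma_u$ with $J_u(0)=c'(u)$ and $\cdiff{t}J_u(0) = \cdiff{u}c'(u)$. Imposing $\langle y'(u),\dot\gamma_u(\ell(u))\rangle = 0$ and using $\norm{\dot\gamma_u}\equiv 1$ reduces the lemma to the scalar ODE $\ell'(u) = -\langle J_u(\ell(u)),\dot\gamma_u(\ell(u))\rangle$.

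The ODE collapses because the tangential component of a Jacobi field is affine in $t$: one has $\frac{d^2}{dt^2}\langle J_u,\dot\gamma_u\rangle = -\langle R(J_u,\dot\gamma_u)\dot\gamma_u,\dot\gamma_u\rangle = 0$, so $\langle J_u(t),\dot\gamma_u(t)\rangle$ is determined by $\langle J_u(0),\dot\gamma_u(0)\rangle = \norm{c'(u)}^2 = 1$ and by its $t$-derivative at $0$, namely $\langle\cdiff{u}c'(u),c'(u)\rangle$, which vanishes since $c$ has unit speed. Hence $\langle J_u(t),\dot\gamma_u(t)\rangle\equiv 1$, the ODE becomes $\ell'(u) = -1$, and $\ell(u) = C - u$. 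This exhibits $y(u) = \exp_{c(u)}((C-u)c'(u))$, which is smooth as a composition of smooth maps; I would fix the free constant $C$ so that $\ell(u_0) = C-u_0$ is small and positive, giving $y$ on a neighbourhood of $u_0$. The same identity shows $y'(u) = J_u^{\perp}(\ell(u))$ is the normal component of $J_u$; writing $J_u^{\perp}(t)=\phi(u,t)N_u(t)$ with $N_u$ the parallel unit normal along $\gamma_u$, the amplitude $\phi$ solves $\Pdifftwo{\phi}{t}+\mathcal K\phi = 0$ with $\phi(u,0)=0$ and $\Pdiff{\phi}{t}(u,0)=\kappa_c(u)>0$ ($\mathcal K$ the Gaussian curvature, $\kappa_c$ the positive geodesic curvature of $c$), so $\phi(u,\ell)\neq 0$ for $\ell$ below the first focal distance and $y$ is regular.

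The main obstacle is the strict convexity of $y$. Since the $\gamma_u$ meet $y$ orthogonally, $y$ is a geodesic wavefront with common normals $\gamma_u$ and caustic $c$ (recovered at $\ell = 0$, where $\phi$ vanishes), and its geodesic curvature as a front, $U = \Pdiff{\phi}{t}/\phi$, satisfies the Riccati equation $\Pdiff{U}{t} = -U^2 - \mathcal K$ with $U\to+\infty$ as $\ell\to 0^+$ (because $\phi\to 0^+$ while $\Pdiff{\phi}{t}\to\kappa_c>0$). Thus for $\ell(u_0)$ small the front curvature of $y$ is large and positive, which is exactly strict convexity; I would make this quantitative through the scalar Jacobi comparison above, while the preservation of $U>0$ over larger ranges is precisely the convex-front-propagation content of \cite{MR807598,MR1022522} under the paper's standing curvature hypotheses. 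The one genuinely delicate point is orientation bookkeeping — verifying that $N_u(0)$ points to the same side as the inner normal of $c$ so that $\kappa_c$, and hence $U$, comes out with the correct positive sign.
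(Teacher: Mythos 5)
The paper never proves this lemma itself --- it is one of the four results imported wholesale from \cite{GNS2020Published} --- so there is no internal proof to compare against; judged on its own terms, your argument is correct and essentially complete. The central computation is right: $\langle J_u(t),\dot\gamma_u(t)\rangle$ is affine in $t$ with value $\norm{c'(u)}^2=1$ at $t=0$ and slope $\langle \tfrac{D}{du}c'(u),c'(u)\rangle=0$ because $c$ has unit speed, so the orthogonality condition collapses to $\ell'(u)=-1$ and $y$ is the Riemannian involute $y(u)=\exp_{c(u)}\bigl((C-u)\,c'(u)\bigr)$, manifestly smooth. Regularity for small $\ell>0$ follows as you say from $\phi(u,0)=0$ and $\partial_t\phi(u,0)=\kappa_c(u)>0$, and since the lemma only asks for a curve on a neighbourhood of $u_0$, the blow-up $U=\partial_t\phi/\phi\to+\infty$ as $\ell\to0^+$ already delivers strict convexity there; you do not need the convex-front-propagation results of \cite{MR807598,MR1022522} for this statement (those matter later in the paper, when such fronts are transported by the billiard flow). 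The orientation issue you flag is indeed the only sign to check, and it is harmless: $N_u$ is defined only up to sign, so one simply takes $N_u(0)=\nu(u)$ where $\tfrac{D}{du}c'(u)=\kappa_c(u)\nu(u)$, making $\partial_t\phi(u,0)=\kappa_c(u)>0$ by fiat. The one cosmetic gap is that you never verify that the resulting $U>0$ is convexity \emph{with respect to the correct co-orientation} (toward or away from $c$), but since the paper's use of the lemma in \Cref{lemma:extendibility} only requires a strictly convex front orthogonal to the tangent geodesics of $c$, either co-orientation serves.
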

	\begin{lemma}\label{thm:confrontorth}
		Let $\gamma$ be a generalised geodesic in $S_K$. Suppose there are two convex fronts, $X$ and $Y$ such that $\dot\gamma(0)$ is the outward unit normal to $X$ and for some $t_0>0$, the velocity $\dot\gamma(t_0)$ is the inward unit normal to $Y$. Also suppose that $\gamma$ reflects transversally between $X$ and $Y$. Parameterise $X$ as \[x:[a,b]\to S_K\] with $x(u_0)=\gamma(0)$ and unit outward normal $\omega(u)$. Then there exists an open set $U\subseteq [a,b]$ containing $u_0$ such that $(x(u_0),\omega(u_0))$ generates a geodesic that hits $Y$ orthogonally, and $(x(u),\omega(u))$ does not, for all $u\in U\backslash\{u_0\}$.
	\end{lemma}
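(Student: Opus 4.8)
The plan is to reduce the statement to a nondegeneracy fact about the propagated wavefront of $X$ meeting $Y$, and then to exploit the hypothesis that $\dot\gamma(t_0)$ is the \emph{inward} normal to $Y$ to secure that nondegeneracy for free. First I would set up the local picture near $p:=\gamma(t_0)$. Because $\gamma$ reflects transversally between $X$ and $Y$, \Cref{lemma:multireflection} (together with \Cref{lemma:singlereflection}) provides a neighbourhood $U$ of $u_0$ on which the generalised geodesic $\gamma_u$ generated by $(x(u),\omega(u))$ depends smoothly on $u$ through all of its reflections, along with a smooth hitting time $t(u)$ and hit point $P(u)=\gamma_u(t(u))\in Y$ near $p$; the implicit function theorem applies because $\gamma$ meets $Y$ orthogonally, hence transversally. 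Writing $V(u)=\dot\gamma_u(t(u))$, and shrinking $U$ so that the flow-out of $X$ is an embedding, the map $u\mapsto P(u)$ is a local diffeomorphism onto an arc of $Y$, and it suffices to determine the $u$ for which $V(u)$ is normal to $Y$ at $P(u)$.

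Next I would introduce the propagated front. Using \Cref{thm:convexlemma} and the curvature conditions on $S$ and $\partial K$ that keep convex fronts convex under propagation and transversal reflection (\cite{MR807598,MR1022522}), I would propagate $X$ to a smooth, strictly convex front $\widetilde X$ through $p$ whose normal field is exactly the family of velocities $V$. Equivalently, $\widetilde X$ is the level set through $p$ of the eikonal distance function $\phi$ to the front $X$, and the geodesics $\gamma_u$ are precisely the gradient curves of $\phi$. In these terms $\gamma_u$ hits $Y$ orthogonally exactly when $\nabla\phi(P(u))\perp Y$, i.e. when $P(u)$ is a critical point of the restriction $\phi|_Y$. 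Since $\dot\gamma(t_0)=\nabla\phi(p)$ is the inward normal to $Y$, the point $p$ is a critical point of $\phi|_Y$, so the whole question collapses to: is $p$ an isolated critical point of $\phi|_Y$?

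The crux, and the step I expect to be the main obstacle, is the nondegeneracy at $p$. Differentiating twice along $Y$ gives the splitting $\mathrm{Hess}\,\phi(T_Y,T_Y)+\langle\nabla\phi,\nabla_{T_Y}T_Y\rangle$. The first term equals the geodesic curvature $\kappa_{\widetilde X}$ of the wavefront $\widetilde X$, since the Hessian of a distance function in the direction tangent to its level set is that level set's curvature, and for an outward (expanding) front this contributes $+\kappa_{\widetilde X}>0$. The second term equals $+\kappa_Y$ because $Y$ is met along its \emph{inward} normal, so the curvature vector $\nabla_{T_Y}T_Y$ of $Y$ points along $\nabla\phi$. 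The two contributions therefore \emph{add} rather than cancel, yielding
\[
\frac{d^2}{ds^2}\,\phi(Y(s))\,\big|_{p}=\kappa_{\widetilde X}(p)+\kappa_Y(p)>0,
\]
a sum of two strictly positive curvatures. Hence $p$ is a strict, nondegenerate local minimum of $\phi|_Y$, and so an isolated critical point.

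Finally, pulling this back through the local diffeomorphism $u\mapsto P(u)$ shows that $u_0$ is the only $u\in U$ for which $(x(u),\omega(u))$ generates a geodesic hitting $Y$ orthogonally, which is the assertion of the lemma. I would emphasise that the decisive point is the \emph{inward}-normal hypothesis on $Y$: had $\gamma$ met $Y$ along its outward normal, the second derivative above would have been the difference $\kappa_{\widetilde X}-\kappa_Y$, which could vanish and would force a separate genericity assumption comparing the two curvatures. With the inward normal the difference becomes a manifestly positive sum, so strict convexity of the two fronts alone delivers the nondegeneracy, and no further hypotheses are required.
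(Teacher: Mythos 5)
The paper does not actually prove this lemma itself: it is one of the four results quoted verbatim from \cite{GNS2020Published} (``see \cite{GNS2020Published} for their proofs''), so there is no internal proof to compare yours against. That said, your argument is a sound, self-contained proof and is in the same spirit as the wavefront techniques of that reference: reduce orthogonal incidence on $Y$ to criticality of the eikonal $\phi|_Y$, and show the critical point at $p=\gamma(t_0)$ is nondegenerate because the second derivative splits as $\kappa_{\widetilde X}(p)+\kappa_Y(p)$, a sum of curvatures that cannot cancel precisely because $Y$ is met along its \emph{inward} normal. Two points deserve to be made explicit rather than left implicit. First, the identification $V(u)=\nabla\phi(P(u))$ — equivalently, that the reflected geodesics emanating normally from $X$ remain gradient curves of a single smooth distance function all the way to $Y$ — is exactly where the paper's standing curvature hypotheses ($\kappa_S<0$, or the inequality relating $\rho\sqrt{\kappa_S}$ and $\kappa_K$) do their work: they keep the propagated front strictly convex and free of focal points through the transversal reflections, so that $\phi$ is smooth near $p$ and $\mathrm{Hess}\,\phi(T,T)=\kappa_{\widetilde X}>0$ there; you cite this, but it is the load-bearing step and should not be waved at. Second, your positivity conclusion requires at least one of $\kappa_{\widetilde X},\kappa_Y$ to be strictly positive; the statement says only ``convex fronts'', but in every application in the paper the fronts are produced by \Cref{thm:convexlemma} and are strictly convex, so this is harmless provided you say so. Your closing observation about why the inward-normal hypothesis is essential (a sum of curvatures rather than a difference that could vanish) is a genuinely useful remark that neither the statement nor the surrounding text makes explicit.
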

	
	Let $\gamma_{(x,\omega)}$ be the geodesic such that $\gamma_{(x,\omega)}(0)=x$ and $\dot\gamma_{(x,\omega)}(0)=\omega$, for $(x,\omega)\in \T{S}\backslash Trap(S_K)$. Let $t(x,\omega)$ be the travelling time of $\gamma_{(x,\omega)}$.  Define the endpoint map as follows:
	 \begin{equation}\label{eq:forwardmap}
	 	\mathcal{P}(x,\omega) = (\gamma_{(x,\omega)}(t(x,\omega)),\dot\gamma_{(x,\omega)}(t(x,\omega)))\in \T{S}
	 \end{equation}
	Note that if $\gamma_{(x_0,\omega_0)}$ is nowhere tangent to $\partial K$, then the restriction of $\mathcal{P}$ to a neighbourhood of $(x_0,\omega_0)$ is a diffeomorphism by \Cref{lemma:multireflection}.
	
	\begin{lemma}\label{lemma:travelling_time_function}
		Suppose that $\gamma_{(x_0,\omega_0)}$ is nowhere tangent to $\partial K$, and denote $(x',\omega') = \mathcal{P}(x_0,\omega_0)$. Then there exists a neighbourhood $U$ of $x_0$ in $\partial S$ and a map $\tau : U\to \reals$ such that for all $x\in U$, we have $(x,x',\tau(x))\in\sT$ and \[\pi_1\circ\mathcal{P}(x,-\nabla\tau(x)) = x'\]
	\end{lemma}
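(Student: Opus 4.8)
The plan is to define $\tau(x)$ as the length of the generalised geodesic from $x$ to the fixed endpoint $x'$ that continues the original geodesic $\gamma_{(x_0,\omega_0)}$, and then to read off its gradient from \Cref{thm:timeangle}. Concretely, the proof divides into two tasks: first, constructing $\tau$ as a smooth function on a neighbourhood $U$ of $x_0$ with $(x,x',\tau(x))\in\sT$; and second, verifying that $-\nabla\tau(x)$ is precisely the initial direction of the geodesic from $x$ to $x'$, so that $\pi_1\circ\mathcal{P}(x,-\nabla\tau(x))=x'$.

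For the construction, I would start from the observation (made just after \eqref{eq:forwardmap}) that, since $\gamma_{(x_0,\omega_0)}$ is nowhere tangent to $\partial K$, \Cref{lemma:multireflection} makes $\mathcal{P}$ a diffeomorphism from a neighbourhood $W$ of $(x_0,\omega_0)$ in $\T{S}$ onto a neighbourhood of $(x',\omega')$. Writing $y(x,\omega)=\pi_1\circ\mathcal{P}(x,\omega)\in\partial S$ for the endpoint map and parameterising $\partial S$ near $x'$ by arclength, the condition ``the geodesic from $(x,\omega)$ ends at $x'$'' becomes a single scalar equation $y(x,\omega)=x'$. The aim is to solve this for the initial direction $\omega=\omega(x)$ as a smooth function of $x$ near $x_0$ via the implicit function theorem; this requires the transversality condition $\partial y/\partial\omega\neq 0$ at $(x_0,\omega_0)$. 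Granting it, one obtains a smooth $\omega(x)$ with $y(x,\omega(x))=x'$, and sets $\tau(x)$ to be the length of the corresponding geodesic, which is smooth and satisfies $(x,x',\tau(x))\in\sT$ by construction.

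I expect the transversality condition $\partial y/\partial\omega\neq 0$ to be the crux: it is exactly the statement that $x'$ is not conjugate to $x$ along the geodesic, equivalently that varying the emission angle at the point source $x$ genuinely moves the exit point on $\partial S$. This is not a consequence of $\mathcal{P}$ being a diffeomorphism alone; rather, it is where the curvature hypotheses on $S$ and $\partial K$ enter. Since the point source at $x$ is a limit of small strictly convex fronts, and these remain strictly convex under both free propagation and reflection off the dispersing boundary $\partial K$ (cf. \cite{MR807598,MR1022522}, as recorded in the introduction), the emanating geodesics never refocus within $S_K$; hence there are no conjugate points before the front reaches $\partial S$, and $\partial y/\partial\omega\neq 0$. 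Here \Cref{thm:convexlemma,thm:confrontorth} supply the convex-front bookkeeping needed to carry this through the successive reflections.

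Finally, with $\tau$ in hand the conclusion is immediate from \Cref{thm:timeangle}: applied to the geodesic from $x$ to $x'$ realising $\tau(x)$, it gives $\nabla\tau(x)=-\omega(x)$, the negative of the (automatically unit) initial direction. Therefore $-\nabla\tau(x)=\omega(x)$ is exactly the starting direction of a geodesic terminating at $x'$, so that $\pi_1\circ\mathcal{P}(x,-\nabla\tau(x))=y(x,\omega(x))=x'$, as required.
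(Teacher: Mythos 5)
Your argument is correct in substance and ends exactly where the paper's proof ends — the identity $\nabla\tau(x)=-\omega(x)$ from \Cref{thm:timeangle} (lemma 5 of \cite{GNS2020Published}) — but the construction of the branch $\omega(x)$ is genuinely different. The paper does not solve $y(x,\omega)=x'$ for $\omega$ by the implicit function theorem. Instead it notes that $\pi_1\circ\flow_t$ is a submersion, so the set $X_k^{-1}(\mathcal{U})$ of initial conditions in $W\subseteq\T{S}$ whose geodesic exits at $x'$ is a one-dimensional submanifold, and then uses \Cref{lemma:distinct_times} to conclude that its intersection with each fibre $\{x\}\times\T{S}_x$ is countable and discrete, so that after shrinking it becomes a graph $\omega=\phi(x)$; smoothness of $\phi$ and of $\tau(x)=t(x,\phi(x))$ then comes from \Cref{lemma:multireflection}. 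Your route instead requires the transversality $\partial y/\partial\omega\neq 0$, which you rightly identify as a no-conjugate-points statement and as the real content of the step: it does hold here, because a point source at $x$ is a limit of small strictly convex fronts and these remain strictly convex under propagation and reflection by the curvature hypotheses (\cite{MR807598,MR1022522}), so the exit point moves monotonically with the emission angle. Be aware, though, that \Cref{thm:convexlemma,thm:confrontorth} do not by themselves supply this; you would need to invoke the convex-front propagation results directly rather than those two lemmas. The trade-off is clear: your version makes the non-degeneracy explicit and gets a single-valued smooth branch for free from the implicit function theorem, while the paper's version avoids any focusing analysis at the cost of a countability argument to force the level curve to be a graph over $x$ near $(x_0,\omega_0)$.
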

	\begin{proof}
		We begin by keeping all the definitions as in the proof of \Cref{lemma:multireflection}. Suppose $\gamma_{(x_0,\omega_0)}$ reflects $k$ times, then $x' = x_{k+1}(x_0,\omega_0)$. We look at the final smooth geodesic section, restricting $\pi_1\circ\flow_t$ to $\widetilde U_k$. Note that $\pi_1\circ\flow_t$ is a submersion, so taking $\mathcal{U}=(\pi_1\circ\flow_t)^{-1}(x')$ gives a codimension 1 submanifold of $\widetilde U_k$. Thus ${X_k}^{-1}(\mathcal{U})$ is a codimension 1 submanifold of $W$. Note that the set of directions which give $x_{k+1}(x,\omega)=x'$ for each fixed $x\in W$ is countable and discrete (by \Cref{lemma:distinct_times}). Hence we can shrink ${X_k}^{-1}(\mathcal{U})$ around $(x_0,\omega_0)$ such that if both $(x,\omega), (x,\omega')\in{X_k}^{-1}(\mathcal{U})$ then $\omega = \omega'$. We can now define a function $\phi:\pi_1({X_k}^{-1}(\mathcal{U}))\to\pi_2({X_k}^{-1}(\mathcal{U}))$ by letting $\phi(x)$ be the unique direction such that $(x,\phi(x))\in {X_k}^{-1}(\mathcal{U})$. Also let $\tau : \pi_1({X_k}^{-1}(\mathcal{U}))\to\reals$ be the function defined by setting $\tau(x)$ as the unique travelling time determined by $(x,\phi(x))$, so that $(x,x',\tau(x))\in \sT$. Note that both $\phi$ and $\tau$ are smooth. It now follows from lemma 5 in \cite{GNS2020Published} that $\nabla\tau(x) = -\phi(x)$, so $\pi_1\circ\mathcal{P}(x,-\nabla\tau(x)) = x'$ and the proof is complete.
	\end{proof}
	
	We now define another set, a graph of the set of travelling times for every $x_1\in\partial S$,
	\[\sT^j_i(x_1) = \{(x,t):(x,x_1,t)\in\sT^j_i\}\]
	\[\sT^j(x_1) = \cup_{i=0}^{\infty}\sT^j_i(x_1)\]
	That is, the set of travelling times of geodesics which end at $x_1$, reflect exactly $i$ times and are tangent to $\partial K$ exactly $j$ times.
	The order, $o(\gamma)$ of a geodesic $\gamma$ is the number of intersections between $\gamma$ and $\partial K$. Let $d_K$ be the minimum distance between obstacles in $K$, and $t(\gamma)$ be the travelling time of $\gamma$. Then we have
	\begin{equation}
		o(\gamma)d_K\leq t(\gamma) \leq o(\gamma)diam(S)
	\end{equation}
	\begin{proposition}\label{lemma:open_arc_generators}
		For every $x_1\in\partial S$, the set $\sT^0(x_1)$ is a countable union of pairwise-transverse, smooth, bounded open arcs $\alpha_i$ in $\partial S\times\reals$. Furthermore, each arc $\alpha_i$ has a corresponding $\tau_i:U_i\to\reals$ such that:
		\begin{itemize}
			\item $U_i$ is an open arc in $\partial S$ and $x\mapsto (x,\tau_i(x))$ is a diffeomorphism from $U_i$ onto $\alpha_i$.
			\item For every $x\in U_i$ the geodesic generated by $(x,-\nabla\tau(x))$ is nowhere tangent to $\partial K$ and intersects $\partial S$ at $x_1$.
		\end{itemize}
	\end{proposition}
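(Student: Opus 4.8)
The plan is to realise $\sT^0(x_1)$ as a countable union of the local graphs furnished by \Cref{lemma:travelling_time_function}, and then to read off boundedness and transversality from the combinatorial reflection bound and from \Cref{thm:timeangle}. First I would fix a point $(x,t)\in\sT^0(x_1)$ and let $\gamma$ be a generalised geodesic from $x$ to $x_1$ realising it, so $\gamma$ is nowhere tangent to $\partial K$. Since $\gamma$ reaches $\partial S$ it is non-trapped, and the inequality $o(\gamma)\,d_K\le t$ forces $o(\gamma)$ to be finite, so \Cref{lemma:multireflection} applies: the reflection pattern and the transversality of each reflection are stable, whence nowhere-tangency is an open condition. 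Writing $\omega$ for the initial direction and noting $\pi_1\circ\mathcal{P}(x,\omega)=x_1$, \Cref{lemma:travelling_time_function} (applied with $x'=x_1$) yields a connected open arc $U\subseteq\partial S$ containing $x$ together with a smooth $\tau:U\to\reals$ such that $(x',x_1,\tau(x'))\in\sT$ and $\pi_1\circ\mathcal{P}(x',-\nabla\tau(x'))=x_1$ for every $x'\in U$. Shrinking $U$ so that all these geodesics stay nowhere tangent to $\partial K$ (possible by the openness just noted), the graph $\{(x',\tau(x')):x'\in U\}$ lies in $\sT^0(x_1)$, passes through $(x,t)$, and is precisely an arc on which $x'\mapsto(x',\tau(x'))$ is a diffeomorphism; the second bullet of the statement holds by construction.

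These local graphs cover $\sT^0(x_1)$. Since $\partial S\times\reals$ is second countable, $\sT^0(x_1)$ is Lindel\"of, so I would extract a countable subfamily $\{\alpha_i\}$ of such graphs, with associated $\tau_i:U_i\to\reals$ on connected open arcs $U_i\subseteq\partial S$, whose union is all of $\sT^0(x_1)$; this delivers the countability together with both bullet points. For boundedness, observe that the reflection number is constant along each connected $\alpha_i$ (again by \Cref{lemma:multireflection}), say equal to $o$, so the bound $o\,d_K\le t\le o\,\mathrm{diam}(S)$ confines $\alpha_i$ to $\partial S\times[o\,d_K,\,o\,\mathrm{diam}(S)]$; as $U_i\subseteq\partial S$ is bounded and, by \Cref{thm:timeangle}, $|\tau_i'|\le 1$, each $\alpha_i$ is a bounded arc of finite length.

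For pairwise transversality, suppose $\alpha_i$ and $\alpha_j$ meet at $(x,t)$. This means there are two generalised geodesics from $x$ to $x_1$ with the same travelling time but distinct initial directions $\omega_i\neq\omega_j$, each of which points strictly into $S$ (a chord of $S$ cannot be tangent to the strictly convex $\partial S$ at its own endpoint). By \Cref{thm:timeangle}, $\tau_i'(x)$ and $\tau_j'(x)$ are the tangential components along $\partial S$ of $-\omega_i$ and $-\omega_j$; two distinct inward unit vectors that shared a tangential component would share the positive normal component and hence coincide, so $\tau_i'(x)\neq\tau_j'(x)$. Thus the tangent directions $(1,\tau_i'(x))$ and $(1,\tau_j'(x))$ differ and the crossing is transverse; the same computation shows each $\alpha_i$, being a graph, is free of self-intersections, and that we may discard any two arcs coinciding on an open set (they carry the same direction and combinatorial type), leaving only isolated transverse crossings.

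The step I expect to be the main obstacle is the passage from the \emph{pointwise} graph of \Cref{lemma:travelling_time_function} to a graph genuinely lying inside $\sT^0(x_1)$: one must ensure that the order-$0$ (nowhere-tangent, fixed-reflection) condition persists for all nearby starting points, which is exactly what \Cref{lemma:multireflection} and the openness of transversality provide. It is worth emphasising that the deeper analytic content—that the travelling-time correspondence is a local diffeomorphism over $\partial S$ with no caustic or fold, which is where the curvature hypotheses on $\kappa_S$ and $\kappa_K$ enter—has already been packaged into \Cref{lemma:travelling_time_function}; the remaining work here is the global bookkeeping of covering, countability, and persistence of tangency-freeness, with transversality and boundedness then following formally.
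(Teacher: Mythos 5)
Your proposal follows the same skeleton as the paper's proof—local graphs supplied by \Cref{lemma:travelling_time_function}, transversality from \Cref{thm:timeangle} via distinct inward initial directions, and boundedness from the order inequality—but it diverges genuinely on how countability is obtained, and this is where the two approaches buy different things. The paper first extends each local graph to a \emph{maximal} arc (using uniqueness of the $\tau_i$ to glue overlapping local pieces), observes that the boundary of a maximal arc is forced to correspond to a geodesic tangent to $\partial K$, and then invokes \Cref{thm:confrontorth} to conclude there are only finitely many maximal arcs of each reflection order, hence countably many in total. You instead take the raw local graphs and extract a countable subfamily by a Lindel\"of argument. That does establish the literal statement, but two things are lost. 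First, your "discard any two arcs coinciding on an open set" step is not quite right as written: two local graphs can overlap on an open set without either containing the other, and discarding one destroys the covering; the correct move is to merge them, which is exactly the maximality step the paper performs and which you have omitted. Without it your family is not pairwise transverse. Second, the maximality is not decorative: the corollaries immediately following the proposition (that $\sT^1(x_1)\cup\sT^2(x_1)=\cup_i\partial\alpha_i$, and the finiteness of intersections of the closed arcs) depend on the arcs being maximal with tangency-determined endpoints and on there being finitely many arcs per order, neither of which your Lindel\"of family provides. On the other hand, your transversality computation (two distinct inward unit vectors with equal tangential components must coincide) is actually spelled out more carefully than in the paper, and your identification of the persistence of nowhere-tangency via \Cref{lemma:multireflection} as the key local issue is accurate. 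To align with what the rest of the paper needs, replace the Lindel\"of extraction with the extension-to-maximal-arcs argument and count those via \Cref{thm:confrontorth}.
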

	\begin{proof}
		Take $(x_0,t_0)\in\sT^0(x_1)$, by \Cref{lemma:distinct_times} there are countably many $\omega_i$ such that $\gamma_{(x_0,\omega_i)}$ starts from $x_0$, ends at $x_1$ and has travelling time $t_0$. By \Cref{lemma:travelling_time_function} for every $\omega_i$ there exists a neighbourhood $U_i$ of $x_0$ in $\partial S$ and a map $\tau_i:U_i\to\reals$ such that for all $x\in U_i$ we have $\pi_1\circ\mathcal{P}(x,-\nabla\tau(x)) = x_1$. It now follows that the map $x\mapsto (x,\tau_i(x))$ is a diffeomorphism onto an open arc $\alpha_i$. Furthermore the arcs are pairwise-transverse at $x_0$, since $\nabla\tau_i(x_0) = \omega_i$. We may extend each $\alpha_i$ by applying \Cref{lemma:travelling_time_function} again at a point $(x',\tau_i(x'))\in\alpha_i$, with $x'\neq x_0$. There we get another set of maps $\tau_j:U_j\to\reals$, only one of which satisfies $\nabla\tau_{j'}(x') = \nabla\tau_i(x')$. By uniqueness of the maps $\tau$, the two must agree on $U_i\cap U_{j'}$ and so $\alpha_i$ is smoothy extended to $U_i\cup U_{j'}$. Therefore we can extend the $\alpha_i$'s to unique maximal open arcs which are pairwise tranverse, whose boundary is determined by geodesics tangent to $\partial K$. Finally, given $x,y\in \alpha_i$, the orders of the geodesics $\gamma_{(x,\tau_i(x))}$ and $\gamma_{(y,\tau_i(y))}$ must be equal by continuity. We can therefore write $o(\gamma_{(x,\tau_i(x))}) = o(\alpha_i)$ for any $x\in\alpha_i$. This gives another set of bounds:
		\begin{equation}\label{eq:arc_order_bound}
			o(\alpha_i)d_K\leq \inf_{x\in\alpha_i}\tau_i(x) \leq \sup_{x\in\alpha_i}\tau_i(x) \leq o(\alpha_i)diam(S)
		\end{equation}
		Consider the set of geodesics ending at $x_1$ which reflect exactly $k\geq 0$ times. By \Cref{thm:confrontorth}, finitely many of these geodesics are tangent to $\partial K$. Therefore there are finitely many maximal arcs $\alpha_i$ for each order $k$. Thus there are countably many maximal arcs in total, with $\sT^0(x_1)=\cup_{i\geq 1}\alpha_i$.
	\end{proof}
	Note that from \Cref{eq:arc_order_bound} we see that the closed arcs $\overline{\alpha_i}$ intersect at most finitely many other closed arcs $\overline{\alpha_{i'}}$, since at a point of intersection, the travelling times are equal, and hence bound the order. There are finitely many $\alpha_i$ of each order, thus bounding the number of possible intersections.
	
	\begin{corollary}
		$\sT^0(x_1)$ is open and dense in $\sT(x_1)$, therefore $\sT^1(x_1)\cup\sT^2(x_2) = \cup_{i\geq 1} \partial\alpha_i$
	\end{corollary}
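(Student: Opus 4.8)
The plan is to deduce the whole statement from the single set identity $\sT^1(x_1)\cup\sT^2(x_1) = \cup_{i\geq 1}\partial\alpha_i$, where $\sT^0(x_1)=\cup_{i\geq 1}\alpha_i$ is the arc decomposition furnished by \Cref{lemma:open_arc_generators}. One inclusion is already built into that proposition: each maximal arc $\alpha_i$ was extended precisely until its generating geodesic grazes $\partial K$, so every point of $\partial\alpha_i$ is the travelling time of a geodesic ending at $x_1$ that is tangent to $\partial K$, and hence, as $K$ is in general position (so $j\leq 2$), lies in $\sT^1(x_1)\cup\sT^2(x_1)$. This gives $\cup_i\partial\alpha_i\subseteq\sT^1(x_1)\cup\sT^2(x_1)$. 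Granting the reverse inclusion, density is then immediate by taking closures, since $\overline{\sT^0(x_1)}\supseteq\cup_i\overline{\alpha_i}=\sT^0(x_1)\cup(\cup_i\partial\alpha_i)=\sT^0(x_1)\cup\sT^1(x_1)\cup\sT^2(x_1)=\sT(x_1)$. So it remains to prove openness and the reverse inclusion $\sT^1(x_1)\cup\sT^2(x_1)\subseteq\cup_i\partial\alpha_i$.

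For openness I would fix $p=(x_0,t_0)\in\sT^0(x_1)$ and show that the tangent travelling times are discrete near $p$. The order bound \Cref{eq:arc_order_bound} confines attention to finitely many orders: any arc with a point of travelling time within $\epsilon$ of $t_0$ has order at most $(t_0+\epsilon)/d_K$. For each such order, \Cref{thm:confrontorth} guarantees that only finitely many geodesics ending at $x_1$ are tangent to $\partial K$, hence only finitely many arcs $\alpha_i$ of that order, and therefore only finitely many boundary points in $\cup_i\partial\alpha_i$, meet the strip $\partial S\times(t_0-\epsilon,t_0+\epsilon)$. Thus the points of $\cup_i\partial\alpha_i$ do not accumulate at $p$, so a sufficiently small neighbourhood of $p$ meets $\sT(x_1)$ only in the nowhere-tangent arcs through $p$ together with, possibly, $p$ itself; in every case this neighbourhood lies in $\sT^0(x_1)$, and $p$ is interior. (Note this covers the delicate situation in which $p$ is realised simultaneously by a tangent and a nowhere-tangent geodesic, since such a $p$ already belongs to $\sT^0(x_1)$.)

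For the reverse inclusion I would take a geodesic $\gamma$ from some $x$ to $x_1$ tangent to $\partial K$ and exhibit it as a limit of nowhere-tangent geodesics ending at $x_1$. At a tangency $q\in\partial K_l$, strict convexity of $\partial K_l$ makes the grazing non-degenerate, so a one-parameter perturbation of $\gamma$ crosses $K_l$ transversally, reflecting at the first crossing, on one side of $\gamma$ and misses $K_l$ near $q$ on the other, both perturbations being nowhere tangent near $q$; away from $q$ transversality persists by the smooth dependence of reflection points in \Cref{lemma:multireflection}. The step I expect to be the main obstacle is to carry out this perturbation while forcing the perturbed geodesic to terminate exactly at $x_1$. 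Here I would use that along the transverse portions of $\gamma$ the endpoint map $\mathcal{P}$ of \Cref{eq:forwardmap} is a local diffeomorphism, again by \Cref{lemma:multireflection}, so that the constraint ``endpoint equals $x_1$'' can be solved by the implicit function theorem as the tangency is removed. The resulting nowhere-tangent geodesics generate an arc $\alpha_i$ accumulating at $(x,t(\gamma))$, whence $(x,t(\gamma))\in\partial\alpha_i$. This completes the reverse inclusion, hence the set identity $\sT^1(x_1)\cup\sT^2(x_1)=\cup_i\partial\alpha_i$ and the density assertion, openness having been established above.
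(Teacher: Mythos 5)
Your decomposition is sound and lines up with what the paper intends: the corollary is stated without proof, with the openness claim resting on the remark preceding it (finitely many maximal arcs of each order, orders bounded below by travelling time via \Cref{eq:arc_order_bound}, hence no accumulation of arc endpoints) and the inclusion $\cup_i\partial\alpha_i\subseteq\sT^1(x_1)\cup\sT^2(x_1)$ already asserted inside the proof of \Cref{lemma:open_arc_generators} (``whose boundary is determined by geodesics tangent to $\partial K$''). Your openness argument is essentially the paper's, and correctly handles the case where a point of $\sT^0(x_1)$ is simultaneously realised by a tangent geodesic. Where you go beyond the paper is the reverse inclusion $\sT^1(x_1)\cup\sT^2(x_1)\subseteq\cup_i\partial\alpha_i$: the paper never argues this here, and effectively defers it to \Cref{lemma:arc_intersection}, which it in turn cites from \cite{math9192434} without proof. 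Your perturbation sketch is the right idea, but the implicit function theorem step is the genuinely delicate point and is asserted rather than carried out: at a grazing tangency the endpoint map $\mathcal{P}$ is only one-sidedly smooth (the two one-sided extensions correspond to reflection orders differing by one), so the constraint $\pi_1\circ\mathcal{P}=x_1$ must be solved separately on each side using the one-sided smooth extensions, and one must also rule out the degenerate possibility that the solution curve coincides locally with the tangency locus --- which is where the finiteness of tangent geodesics from \Cref{thm:confrontorth} enters. As written this is a sketch of \Cref{lemma:arc_intersection} rather than a proof of it; since the paper itself outsources exactly this step, your account is consistent with the paper but does not close the gap the paper leaves open.
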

	\begin{corollary}\label{lemma:travelling_times_dense}
		The set $\sT^1(x_1)$ is open and dense in $\sT^1(x_1)\cup\sT^2(x_1)$ and $\sT^2(x_1)$ is discrete. 
	\end{corollary}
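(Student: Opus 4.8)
The plan is to read off the structure of $\sT^1(x_1)\cup\sT^2(x_1)$ from the preceding corollary, which identifies it with the set $\cup_{i\ge 1}\partial\alpha_i$ of endpoints of the smooth arcs $\alpha_i$ that make up $\sT^0(x_1)$. Each such endpoint is the travelling time of a limiting geodesic tangent to $\partial K$, and it lies in $\sT^1(x_1)$ or $\sT^2(x_1)$ according to whether that limiting geodesic grazes one or two components. Since every $K_l$ is strictly convex, a smooth geodesic arc meets a single component at most once, so the doubly tangent geodesics are exactly those grazing two \emph{distinct} components; these are the bitangents of \Cref{prop:bitangent_upper_bound}, now additionally constrained to terminate at $x_1$.

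First I would establish that $\sT^2(x_1)$ is discrete. Fix $T>0$. By the order bound (equivalently $o(\gamma)d_K\le t(\gamma)$, the arc form being \Cref{eq:arc_order_bound}), any geodesic of travelling time at most $T$ reflects at most $T/d_K$ times. For each fixed reflection count $k$, the argument of \Cref{lemma:open_arc_generators}, via \Cref{thm:confrontorth} applied to the convex fronts shed tangentially by the grazed components, yields only finitely many tangent geodesics ending at $x_1$, hence only finitely many doubly tangent ones. Summing over the finitely many admissible values $k\le T/d_K$ shows that $\sT^2(x_1)$ meets the strip $\partial S\times[0,T]$ in finitely many points. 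As $T$ is arbitrary, $\sT^2(x_1)$ has no accumulation point in $\partial S\times\reals$, i.e. it is discrete and closed.

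Next I would show $\sT^1(x_1)$ is open in $\sT^1(x_1)\cup\sT^2(x_1)$. A point $p\in\sT^1(x_1)$ is the common endpoint of the two $\sT^0$-arcs, with orders differing by two, that the single grazing separates (on one side the geodesic misses the grazed component, on the other it crosses it transversally and reflects). The transversality and implicit-function analysis underpinning \Cref{lemma:singlereflection,lemma:multireflection,lemma:open_arc_generators} shows the generating geodesic stays tangent to exactly one component for all nearby boundary data, so exactly those two arcs have $p$ as an endpoint. Since $\sT^2(x_1)$ is closed by the previous step, $p$ admits a neighbourhood in $\sT^1(x_1)\cup\sT^2(x_1)$ disjoint from $\sT^2(x_1)$, which gives openness.

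The delicate remaining point is density, namely that each doubly tangent $q\in\sT^2(x_1)$ lies in the closure of $\sT^1(x_1)$. Working locally at the bitangent geodesic $\gamma^*$ grazing $K_l$ and $K_j$, the natural approach is to deform $\gamma^*$ through geodesics still terminating at $x_1$ so as to break one tangency while preserving the other, thereby exhibiting singly tangent geodesics accumulating at $\gamma^*$. Making this decoupling precise — controlling the near-grazing reflections off the intervening components by means of the curvature hypotheses on $\kappa_S$ and $\kappa_K$ that keep propagated fronts convex, so that the two tangency conditions can be varied independently rather than remaining rigidly linked — is exactly where the work lies, and I expect it to be the main obstacle. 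It is here that strict convexity of the components and the general-position hypothesis are essential.
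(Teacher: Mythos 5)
The paper offers no proof of this corollary at all: it is presented as an immediate consequence of \Cref{lemma:open_arc_generators} and the preceding corollary (which identifies $\sT^1(x_1)\cup\sT^2(x_1)$ with $\cup_{i\geq 1}\partial\alpha_i$), so there is nothing to compare your argument against line by line. Your treatment of discreteness is sound and is exactly the reasoning the paper implicitly relies on: finitely many tangent geodesics ending at $x_1$ for each order (via \Cref{thm:confrontorth}, as in the proof of \Cref{lemma:open_arc_generators}), together with the bound $o(\gamma)d_K\leq t(\gamma)$, gives local finiteness in $\partial S\times[0,T]$. The openness argument via closedness of $\sT^2(x_1)$ is also fine. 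One slip: the two arcs meeting at a point of $\sT^1(x_1)$ have orders differing by \emph{one}, not two ($o(\alpha_i)+1=o(\alpha_{i'})$ in \Cref{lemma:arc_intersection}) --- a tangency separates geodesics that miss the grazed component from geodesics that reflect on it once.

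The genuine gap is density, and you are right that this is where the difficulty sits, but the deformation you sketch cannot work as described. You propose to move through geodesics \emph{still terminating at $x_1$} while preserving the tangency to one component and breaking the other. However, the geodesics ending at $x_1$ and tangent to a fixed component $K_l$ form (per order) a finite set --- this is precisely the finiteness statement you invoked from \Cref{lemma:open_arc_generators} to prove discreteness of $\sT^2(x_1)$ --- so there is no one-parameter family inside which to deform: imposing tangency to $K_l$ on the pencil of geodesics through $x_1$ already isolates the bitangent. Worse, your own discreteness argument applies verbatim to $\sT^1(x_1)$, showing that $\sT^1(x_1)\cup\sT^2(x_1)$ is itself locally finite; a point of $\sT^2(x_1)$ therefore cannot be an accumulation point of $\sT^1(x_1)$, and density in the stated fixed-endpoint form cannot be recovered by any perturbation argument. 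The perturbations that do break one tangency while preserving the other (moving the point of tangency along $\partial K_l$, as in the construction of the arcs $\beta_j$ in \Cref{lemma:tangent_arcs}) necessarily move the terminal point away from $x_1$, so the singly tangent geodesics they produce accumulate at the bitangent only in the space where the endpoint is allowed to vary --- which is the setting in which the density assertion is actually used later in the paper ($\wsT^2=\cup_j\partial\beta_j$). If you want to defend the corollary as literally stated, you would need either to argue that $\sT^2(x_1)$ is empty for the $x_1$ under consideration, or to reinterpret the claim in the variable-endpoint sense; as it stands, your proposal proves two of the three assertions and correctly identifies, but does not close, the third.
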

	
	Now take some $(x_0,t)\in\partial \alpha_i$ and define $v_0 = \lim_{x\to x_0}-\nabla\tau_i(x)$. Then $\gamma_{(x_0,v_0)}$ will be a geodesic from $x_0$ to $x_1$ which is tangent to $\partial K$ at least once. The following results follows exactly as in \cite{math9192434}.
	\begin{proposition}\label{lemma:arc_intersection}
		Given $(x_0,t)\in\sT^1(x_1)$, there exist unique $i,i'\geq 1$ such that $\{(x_0,t)\} = \partial\alpha_i\cap\partial\alpha_{i'}$, with $o(\alpha_i)+1 = o(\alpha_{i'})$. For all $x\in U_i\cap U_{i'}$ we have $\tau_i(x) < \tau_{i'}(x)$. Note that both $U_i$ and $U_{i'}$ lie on the same side of $x_0$ in $\partial S$. Furthermore, \[\lim_{x\to x_0} \tau_i(x) = \lim_{x\to x_0} \tau_{i'}(x) = t \text{ and }\]\[ \lim_{x\to x_0} \nabla\tau_i(x) = \lim_{x\to x_0} \nabla\tau_{i'}(x).\]
	\end{proposition}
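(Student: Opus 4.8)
The plan is to analyse the local structure of the travelling-time arcs near the tangential geodesic $\gamma_0 = \gamma_{(x_0,v_0)}$, where $v_0 = \lim_{x\to x_0}-\nabla\tau_i(x)$. Since $(x_0,t)\in\sT^1(x_1)$, this geodesic runs from $x_0$ to $x_1$ and is tangent to $\partial K$ at exactly one point $p\in\partial K_l$ for some component $K_l$. By the general position condition the tangency point $p$ is either the first or the last reflection point of $\gamma_0$; replacing $\gamma_0(s)$ by $\gamma_0(1-s)$ if necessary, I assume $p$ is the first, and write $k = o(\gamma_0)-1$ for the number of genuine transversal reflections of $\gamma_0$.

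First I would construct the two arcs. Working in a normal coordinate chart about $p$ large enough to contain $S$ (available since $\text{diam}(S)<\rho$), $\gamma_0$ becomes a straight segment tangent to the strictly convex curve $\partial K_l$ at $p$, with $K_l$ lying locally on one side. Nearby geodesics ending at $x_1$ split into two types according to their behaviour near $p$: those that graze past $K_l$ without meeting it, and those that strike $\partial K_l$ transversally near $p$ and reflect. For $x$ sufficiently close to $x_0$ the near-grazing reflected ray stays $C^1$-close to the forward continuation of $\gamma_0$, so by \Cref{lemma:multireflection} the downstream reflection sequence is the same for both types; hence the grazing family has order $k$ and the reflecting family order $k+1$, giving $o(\alpha_i)+1=o(\alpha_{i'})$. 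Applying \Cref{lemma:travelling_time_function} to a representative of each family produces smooth $\tau_i$ and $\tau_{i'}$ whose graphs are the arcs $\alpha_i,\alpha_{i'}$ with $(x_0,t)$ on their common boundary. I would then invoke the maximality and local finiteness from \Cref{lemma:open_arc_generators} to identify these with the maximal arcs and to see that no other arc can have $(x_0,t)$ in its closure, since any geodesic near $\gamma_0$ ending at $x_1$ falls into exactly one of the two local families; this yields the uniqueness of $i,i'$.

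The crux is the ``same side'' assertion, and here I would carry out the grazing-reflection estimate explicitly. In the normal chart, place $p$ at the origin with $\gamma_0$ along a coordinate axis and $\partial K_l$ curving to the strictly convex side; linearising the reflection law $\omega\mapsto\omega-2\langle\omega,\nu\rangle\nu$ about the tangent configuration, a reflection point at signed offset $\sigma$ from $p$ deflects the outgoing direction, to first order, by a quantity proportional to $\sigma$ times the (positive) boundary curvature. Tracing the incident ray of the reflecting family back to $\partial S$, and tracing the grazing ray of the missing family back as well, one finds that both intersection points approach $x_0$ from the \emph{same} side of $\partial S$, the common sign being forced by the strict convexity of $\partial K_l$, exactly as in the model computation of \cite{math9192434}. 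This is the step I expect to require the most care, since it is where the curved metric enters and one must verify that the leading-order behaviour read off in the normal chart is not upset by curvature corrections.

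Finally I would record the remaining conclusions. Since both families limit to $\gamma_0$ as $x\to x_0$, continuity gives $\lim_{x\to x_0}\tau_i(x)=\lim_{x\to x_0}\tau_{i'}(x)=t$, and by \Cref{thm:timeangle} the initial directions converge, so $\lim_{x\to x_0}\nabla\tau_i(x)=\lim_{x\to x_0}\nabla\tau_{i'}(x)=-v_0$. The set $U_i\cap U_{i'}$ is then a nonempty one-sided neighbourhood of $x_0$, because the two arcs lie on the same side. For the ordering $\tau_i(x)<\tau_{i'}(x)$ there I would compare lengths for a common starting point $x$: the two trajectories agree away from $p$, but the order-$(k+1)$ geodesic makes an extra excursion down to $\partial K_l$ and back, so unfolding the single reflection near $p$ and applying the triangle inequality shows it is strictly longer than the grazing geodesic of order $k$ with the same endpoints.
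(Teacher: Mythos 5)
The paper gives no proof of this proposition at all: it states only that the result ``follows exactly as in'' the cited Euclidean paper \cite{math9192434}. Your proposal is therefore a reconstruction of the deferred argument rather than something to compare line-by-line, and in outline it is the right reconstruction: the local cusp analysis at the tangency $p$, the splitting of nearby geodesics ending at $x_1$ into a grazing family of order $k$ and a reflecting family of order $k+1$, the use of \Cref{lemma:travelling_time_function} to produce the two generators $\tau_i,\tau_{i'}$, the normal-coordinate linearisation for the same-side claim, and \Cref{thm:timeangle} plus continuity for the limits of $\tau$ and $\nabla\tau$. This is exactly the shape of the argument the authors are importing, and your explicit warning that the curvature corrections in the normal chart must be controlled is the honest statement of where the Riemannian extension needs work.

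Two steps in your write-up are genuinely soft. First, the reduction ``replace $\gamma_0(s)$ by $\gamma_0(1-s)$ and assume the tangency is the first reflection point'' is not a symmetry of the problem as posed: the arcs live in $\sT(x_1)$, where $x_1$ is \emph{fixed} and $x$ varies, and reversing the geodesic lands you in $\sT(x_0)$ instead. You must treat the case where the tangency occurs at the last reflection point (adjacent to the fixed endpoint $x_1$) separately; the perturbation is then driven from the far end of the trajectory and the same-side conclusion at $x_0$ requires propagating the fold back through the $k$ intermediate reflections via \Cref{lemma:multireflection}, not just a local computation at $p$. Second, the inequality $\tau_i(x)<\tau_{i'}(x)$ does not follow from ``the two trajectories agree away from $p$'' plus the triangle inequality: for a fixed $x\in U_i\cap U_{i'}$ the two geodesics have different initial directions and \emph{all} of their reflection points differ, so there is no common sub-path to compare. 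Since both branches have the same value and the same gradient at $x_0$, the sign of $\tau_{i'}-\tau_i$ is a second-order (in fact $3/2$-power, fold-type) effect, and establishing it requires the normal-form expansion of the reflection near the tangency rather than a length comparison of two literally overlapping paths. Your intuition for why the reflecting branch is longer is correct, but as written this step is an assertion, not a proof.
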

	
	\Cref{lemma:arc_intersection} has the following interesting consequence, since the arcs $U_i$ and $U_{i'}$ lie on the same side of $x_0$ in $\partial S$.
	\begin{corollary}
		$\sT^1(x_1)\cup\sT^2(x_1)$ is the closure of the set of all isolated cusps in $\sT(x_1)$
	\end{corollary}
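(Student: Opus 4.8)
The plan is to identify the isolated cusps of $\sT(x_1)$ with exactly the points of $\sT^1(x_1)$, and then deduce the corollary by taking closures. By a \emph{cusp} of $\sT(x_1)$ I mean a point at which two of the smooth arcs $\alpha_i$ meet with a common tangent line while lying on the same side of that point; an isolated cusp is one admitting a neighbourhood of $\sT(x_1)$ containing no other cusp. With this terminology the statement reduces to two claims: (A) the set of isolated cusps of $\sT(x_1)$ equals $\sT^1(x_1)$, and (B) $\overline{\sT^1(x_1)} = \sT^1(x_1)\cup\sT^2(x_1)$.

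For the first half of (A), I would show every point $(x_0,t)\in\sT^1(x_1)$ is an isolated cusp. By \Cref{lemma:arc_intersection} there are unique arcs $\alpha_i,\alpha_{i'}$ with $\{(x_0,t)\} = \partial\alpha_i\cap\partial\alpha_{i'}$, whose defining intervals $U_i,U_{i'}$ lie on the same side of $x_0$, with $\lim_{x\to x_0}\tau_i(x) = \lim_{x\to x_0}\tau_{i'}(x) = t$ and $\lim_{x\to x_0}\nabla\tau_i(x) = \lim_{x\to x_0}\nabla\tau_{i'}(x)$. Writing the two arcs as graphs $x\mapsto(x,\tau_i(x))$ and $x\mapsto(x,\tau_{i'}(x))$ over the common one-sided interval, equality of the limiting gradients gives a common tangent direction at $(x_0,t)$, while $\tau_i<\tau_{i'}$ on the overlap keeps the two branches distinct on the same side — precisely a cusp. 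To see the cusp is isolated I would use that $\sT^2(x_1)$ is discrete (\Cref{lemma:travelling_times_dense}) to choose a neighbourhood $N$ of $(x_0,t)$ meeting no point of $\sT^2(x_1)$, and then invoke the local finiteness recorded after \Cref{lemma:open_arc_generators} — each $\overline{\alpha_i}$ meets only finitely many other closed arcs — to shrink $N$ so that it meets only $\overline{\alpha_i}$ and $\overline{\alpha_{i'}}$. Since arc interiors are smooth and any interior crossing of two arcs is transverse (\Cref{lemma:open_arc_generators}), $N$ then contains no cusp other than $(x_0,t)$.

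For the converse half of (A), I would show any isolated cusp lies in $\sT^1(x_1)$. A cusp is a non-smooth point of the arc system, and since each $\alpha_i$ is smooth and distinct arcs cross transversally in their interiors, a cusp can only occur at an arc endpoint, i.e. in $\cup_{i\geq1}\partial\alpha_i = \sT^1(x_1)\cup\sT^2(x_1)$ by the corollary preceding \Cref{lemma:travelling_times_dense}. A point of $\sT^2(x_1)$ cannot be an isolated cusp: by \Cref{lemma:travelling_times_dense} the set $\sT^1(x_1)$ is dense in $\sT^1(x_1)\cup\sT^2(x_1)$, so every neighbourhood of such a point contains points of $\sT^1(x_1)$, each of which is a cusp by the previous paragraph. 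Hence the isolated cusps are exactly the points of $\sT^1(x_1)$, giving (A).

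For (B), note that $\sT^0(x_1)$ is open in $\sT(x_1)$ (again the corollary before \Cref{lemma:travelling_times_dense}) and, by general position, $\sT(x_1) = \sT^0(x_1)\cup\sT^1(x_1)\cup\sT^2(x_1)$; hence $\sT^1(x_1)\cup\sT^2(x_1)$ is closed in $\sT(x_1)$. Combined with the density of $\sT^1(x_1)$ in it from \Cref{lemma:travelling_times_dense}, this yields $\overline{\sT^1(x_1)} = \sT^1(x_1)\cup\sT^2(x_1)$, and together with (A) the closure of the set of isolated cusps is $\sT^1(x_1)\cup\sT^2(x_1)$, as claimed. I expect the main obstacle to be the isolation step in (A): one must rule out other arcs accumulating at the $\sT^1$ point, which is exactly where the discreteness of $\sT^2(x_1)$ and the local finiteness of the arc family have to be combined carefully; the purely geometric cusp description and the closure computation are then routine.
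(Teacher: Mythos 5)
Your proposal is correct and follows essentially the same route as the paper, which justifies this corollary in a single line by appealing to \Cref{lemma:arc_intersection} (the two arcs $U_i$, $U_{i'}$ lie on the same side of $x_0$ with coinciding limiting gradients and distinct values, i.e.\ form a cusp) together with the openness, density and discreteness statements of the two preceding corollaries for the closure computation; your claims (A) and (B) are exactly this argument written out in full. One small repair for the isolation step: the fact you need is that only finitely many closed arcs $\overline{\alpha_{i''}}$ enter a small neighbourhood of $(x_0,t)$, and this follows from the order bound \Cref{eq:arc_order_bound} combined with the finiteness of the number of maximal arcs of each order (an arc passing near $(x_0,t)$ has travelling time near $t$, hence bounded order), rather than from the statement that each $\overline{\alpha_i}$ meets only finitely many other closed arcs, since an arc could accumulate near $(x_0,t)$ without ever meeting $\overline{\alpha_i}$.
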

	This implies that we can detect the points of tangency in the set of travelling times directly. One  could do this by observing the cusps in an appropriate embedding of the travelling times, as in the following example.
	
	\begin{example}\label{example:echograph}
		\Cref{fig:echograph} displays a so-called echograph of the set $\sT(x_1)$, up to 3 reflections, of obstacles in the Poincare half-plane. $x_1$ is the point in the bottom right of the outer boundary $\partial S$ where the two blue arcs (representing geodesics which do not intersect the obstacles) meet. The embedding $E:\sT(x_1)\to\reals^2$ is given by $(x_0,t_0)\mapsto x_0+t_0\eta(x_0)$, where $\eta(x_0)$ is the normal to $\partial S$ at $x_0$. Note that we have chosen to embed $\sT(x_1)$ in the Euclidean plane to allow for an easier interpretation of the arcs. One could also choose an analogous embedding in the Poincare half-plane, although the important features of the echograph (namely the cusps formed by the arcs) will remain all the same.
		
		Each time an arc meets another at a cusp, the order of the arcs must have a difference of exactly one. We denote the different orders by the use of colour, blue for the arcs of order 0 and red for those of order 3. Note that for increasing the order of an arc will increase the number of data points computed by an order of magnitude.
	\end{example}
	
	\begin{figure*}
		\center
		\includegraphics[width=\linewidth,trim={0px 0px 0px 20px},clip]{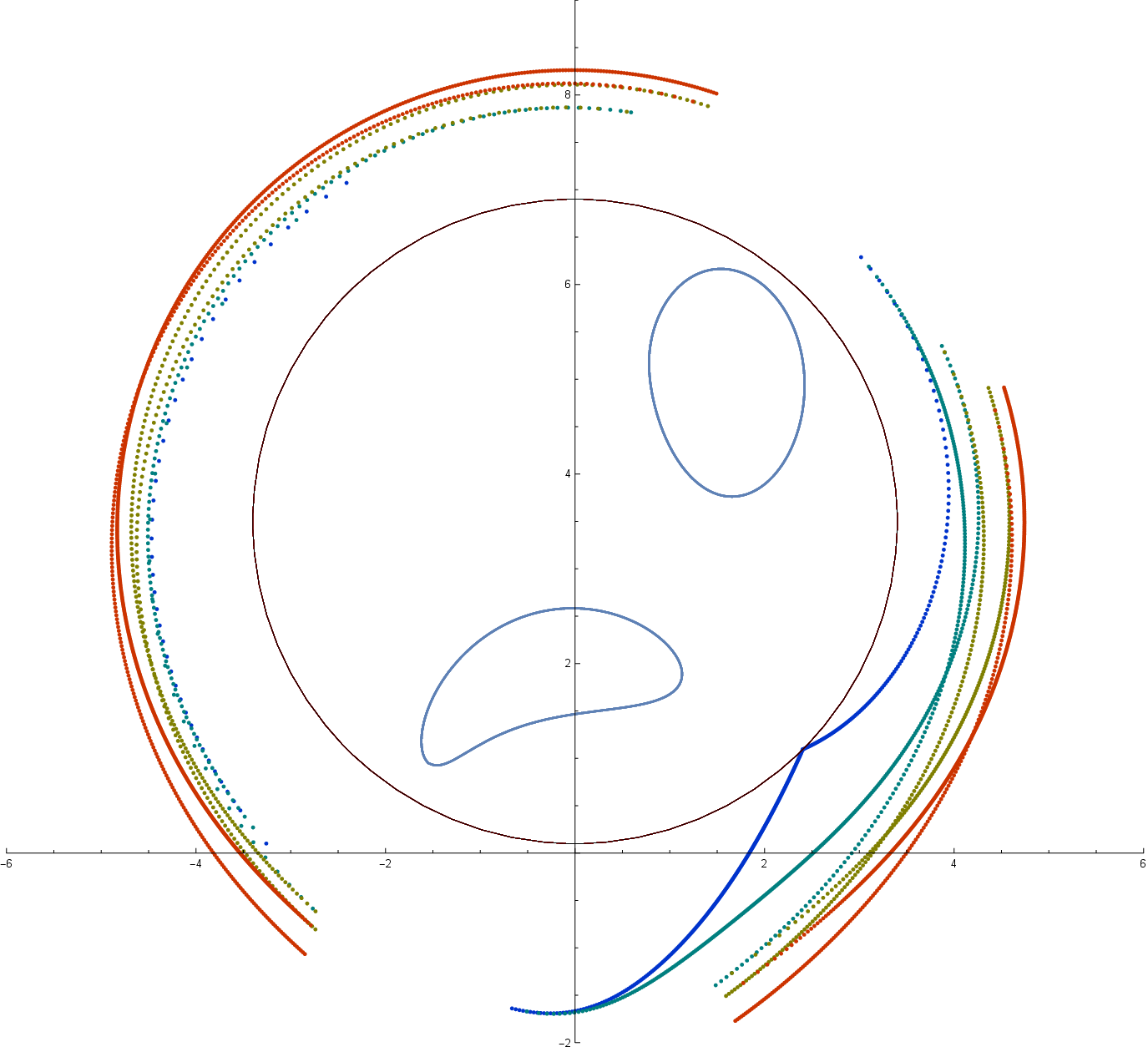}
		\caption{Echograph of two obstacles}
		\label{fig:echograph}
	\end{figure*}
	
	Define the modified arcs $\alpha_i^* = \alpha_i\backslash\cup_{i'\neq i}\alpha_{i'}^*$, to exclude the finite number of points of intersection, and let $\sT^{0}(x_1)^*=\cup_{i\geq 1}\alpha_i^*$. Then the $\alpha_i^*$ partition $\sT^{0}(x_1)^*$ and the generators $\tau_i$ restrict to smooth functions on the open subsets $\pi_1(\alpha_i^*)\subseteq \partial S$. We now define the sets of travelling times with initial directions included.
	\[\widetilde\sT^0(x_1)^*\ = \{(x,-\nabla\tau_i(x),t):(x,t)\in\sT^0(x_1)^*\}\]
	\[\widetilde\sT^{0*} = \{(x,\omega,x_1,t):x_1\in\partial S, (x,\omega,t)\in\widetilde\sT^0(x_1)^*\}\]
	Moreover, define $\widetilde\sT^0$ as the closure of $\widetilde\sT^{0*}$ in \[\{(x,\omega,x_1,t):(x,x_1,t)\in\sT^0\text{ and }(x,\omega)\in\T{S}\},\] and $\wsT^0(x_1) = \{(x,\omega,t):(x,\omega,x_1,t)\in\wsT^0\}$. Similarly, define $\wsT$ as the closure of $\wsT^0$ in \[\{(x,\omega,x_1,t):(x,x_1,t)\in\sT\text{ and }(x,\omega)\in\T{S}\},\] $\wsT(x_1) = \{(x,\omega,t):(x,\omega,x_1,t)\in\wsT\}$ and $\wsT^q = \{(x,\omega,x_1,t)\in\wsT:(x,x_1,t)\in\sT^q\}$.
	
	\begin{proposition}\label{lemma:tangent_arcs}
		$\wsT^1$ is a countable union of maximal smooth open arcs $\beta_j$ such that:
		\begin{itemize}
			\item Each $\beta_j$ is diffeomorphic to a smooth open arc $V_j\subseteq \partial S$
			\item $\wsT^2 = \cup_{j\geq 1}\partial\beta_j$
			\item Each $(x_0,\omega_0,x_1,t)\in\wsT^2$ is on the boundary of exactly four distinct arcs $\beta_j$, $\beta_{j'}$, $\beta_{j''}$, $\beta_{j'''}$ with three of $V_{j}, V_{j'}, V_{j''}, V_{j'''}$ being on the same side of $x_0$.
		\end{itemize}
	\end{proposition}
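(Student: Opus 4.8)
The plan is to establish for $\wsT^1$ the exact analogue of the arc structure that \Cref{lemma:open_arc_generators} gives for $\sT^0(x_1)$, but organised by the point of tangency rather than by the endpoint. Given a point $(x_0,\omega_0,x_1,t)\in\wsT^1$, the geodesic $\gamma=\gamma_{(x_0,\omega_0)}$ is tangent to exactly one component, say $K_l$, at a single point $p=k_l(s_0)$; by the general position remark this tangency is either the first or the last reflection of $\gamma$. I would parameterise a neighbourhood of $\gamma$ in $\wsT^1$ by the tangency point: for $s$ near $s_0$ let $\gamma_s$ be the geodesic issuing from $(k_l(s),k_l'(s))$ with the orientation of $\gamma$, flow it backward and forward through its finitely many transversal reflections, and record $(x(s),\omega(s),x_1(s),t(s))$. \Cref{lemma:multireflection}, together with \Cref{lemma:singlereflection} for the two smooth end-segments, makes all of these smooth in $s$, and the backward-flow map $s\mapsto x(s)$ is a local diffeomorphism onto an arc $V_j\subseteq\partial S$ precisely because the maps $F,\widetilde F$ of \Cref{prop:bitangent_upper_bound} are diffeomorphisms. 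This gives the first bullet. Extending maximally, re-applying the lemmas as in the proof of \Cref{lemma:open_arc_generators}, the family $\gamma_s$ stays once-tangent until a second tangency appears; since tangencies are isolated by \Cref{thm:confrontorth} and $\partial S$ has no boundary, the only way an arc $\beta_j$ can terminate is at a bitangent geodesic, so $\partial\beta_j\subseteq\wsT^2$; conversely every bitangent is a limit of once-tangent geodesics obtained by releasing one of its two tangencies, giving $\wsT^2=\cup_{j\ge 1}\partial\beta_j$.

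For the final bullet, fix a bitangent $(x_0,\omega_0,x_1,t)\in\wsT^2$; by general position it is a smooth, non-reflecting geodesic tangent to exactly two components, say $x_0\to p_l\to p_j\to x_1$ with $p_l=k_l(s_l)$ and $p_j=k_j(s_j)$. Any once-tangent geodesic close to it is tangent to $K_l$ or to $K_j$ and to nothing else, since by continuity it cannot reach a third obstacle. The geodesics tangent to $K_l$ near $p_l$ form a single one-parameter family; deleting the bitangent value $s=s_l$ splits it into two once-tangent arcs, distinguished by whether the forward continuation misses $K_j$ or reflects off it. Likewise the geodesics tangent to $K_j$ near $p_j$ give two arcs, distinguished by whether the backward continuation misses $K_l$ or reflects off it. This produces exactly four arcs and no more.

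It remains to locate their starting points relative to $x_0$, and this is the crux. For the two arcs tangent to $K_l$ the starting point is $x=\widetilde F_l(s)$, the backward flow from the $K_l$-tangency, which never meets $K_j$; since $\widetilde F_l$ is a diffeomorphism and $s$ crosses $s_l$ as we pass from one arc to the other, their starting points lie on \emph{opposite} sides of $x_0$. For the two arcs tangent to $K_j$, however, the backward flow to $\partial S$ must graze $K_l$, and the distinction between these two arcs is exactly whether it misses or reflects off $K_l$; this is the same grazing configuration analysed in \Cref{lemma:arc_intersection}, whose conclusion is that the miss-family and the reflect-family approach the tangent value from the \emph{same} side of $x_0$. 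Hence these two starting points lie on the same side. Combining a one-one split with a two-zero split forces a three-one split overall, which is the claim, so that three of $V_j,V_{j'},V_{j''},V_{j'''}$ lie on one side of $x_0$. I expect the main obstacle to be precisely this last step: making rigorous that the starting-point side is governed solely by the local grazing of the obstacle lying between $x_0$ and the second tangency, so that tangency to $K_l$ yields a transversal diffeomorphic crossing while tangency to $K_j$ yields a one-sided cusp, and checking that the orientation bookkeeping of which obstacle is grazed first from $x_0$ is carried through consistently.
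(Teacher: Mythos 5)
Your overall strategy is the one the paper uses: parameterise each component of $\wsT^1$ by the point of tangency on $\partial K$, use \Cref{lemma:singlereflection} and \Cref{lemma:multireflection} to flow back to $\partial S$ diffeomorphically, let the maximal arcs terminate at bitangencies, and at each point of $\wsT^2$ produce two arcs from each of the two tangencies, with the pair coming from the first tangency landing on opposite sides of $x_0$ (unobstructed backward flow, hence a diffeomorphic crossing) and the pair from the last tangency landing on the same side (a grazing fold), giving the $3$--$1$ split. Your explicit appeal to \Cref{lemma:arc_intersection} for the ``same side'' step is actually more than the paper offers --- the paper simply asserts that $V_{j''}$ and $V_{j'''}$ lie on the same side --- and it is the right geometric mechanism, though, as you yourself flag, that proposition is stated for arcs parameterised by the starting point with a fixed endpoint, so transferring it to your tangency-point parameterisation requires threading the intermediate reflection maps of \Cref{lemma:multireflection} through the argument.

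The one genuine error is your claim that a point of $\wsT^2$ is, ``by general position, a smooth, non-reflecting geodesic'' of the form $x_0\to p_l\to p_j\to x_1$. That is false. General position only forces each tangency to occur at the first or last point of contact with $\partial K$; a trajectory tangent to $K_l$ at its first contact may reflect transversally off one or more further obstacles before ending with a tangency to $K_j$ at its last contact, and no smooth segment of such a trajectory meets more than two components, so nothing is violated. (The paper's own \Cref{lemma:extendibility} reserves smoothness for the case where the first \emph{two} contacts are both tangencies; only the $4n(n-1)$ bitangents counted in \Cref{lemma:number_of_obstacles} are smooth, and these are a proper subset of $\wsT^2$ in general.) Your four-arc count and side analysis do survive once restated for such generalised bitangents --- the forward and backward continuations from each tangency pass through finitely many transversal reflections, each a diffeomorphism by \Cref{lemma:multireflection}, before reaching the miss-or-reflect dichotomy at the far obstacle --- but as written your case analysis only covers the non-reflecting bitangents, whereas the paper's proof is phrased so as to cover all of $\wsT^2$.
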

	\begin{proof}
		Given $(x_0,\omega_0,x_1,t)\in\wsT^1$ we know that the geodesic $\gamma_{(x_0,\omega_0)}$ is tangent to $\partial K$ exactly once, at either the first or last point of contact with $\partial K$ (by general position). Suppose the geodesic is tangent at the first point of contact, say at $x'\in\partial K_i$. Parameterise $\partial K_i$ in a neighbourhood $W$ of $x'$ by the unit speed curve $k_i:(0,1)\to S$. Then by the same argument as \Cref{lemma:singlereflection}, after possibly shrinking $W$ there is a smooth function $\tau:W\to\reals$ such that $y(x) = \flow_{\tau(x)}(x,\dot k_i(k_i^{-1}(x)))\in \T{S}$ for all $x\in W$, and $y(x') = (x_0,-\omega_0)$. This defines a smooth open arc \[V = \{(v(x), \mathcal{P}(v(x)), t(v(x)):x\in W\}\subseteq \wsT^1,\] where $v(x) = (\pi_1\circ y(x),-\pi_2\circ y(x))$ and $t$ is the travelling time function. The symmetric argument would follow if $\gamma_{(x_0,\omega_0)}$ was tangent at the last point of contact. Hence the path components $\beta_j$ of $\wsT^1$ are disjoint maximal smooth open arcs. Note that $\beta_j$ is therefore diffeomorphic to the open arc $V_j = \pi_1(\beta_j)\subseteq \partial S$.
		
		Now for $(x_0,\omega_0,x_1,t)\in\wsT^2$, the geodesic $\gamma_{(x_0,\omega_0)}$ is tangent to $\partial K$ exactly twice, at both the first and last points of contact with $\partial K$, say $x'\in\partial K_i$ and $x''\in\partial K_{i'}$. Since the set $\wsT^2$ is discrete, we may take a neighbourhood $U$ of $x'$ such that every $x\in U$ is contained in a maximal arc $\beta_j$ by the argument above. This defines two maximal arcs $\beta_j$ and $\beta_{j'}$, with $V_j$ and $V_{j'}$ on opposite sides of $x_0$. We can do the same for $x''$ with $\beta_{j''}$ and $\beta_{j'''}$ being maximal arcs and $V_{j''}$ and $V_{j'''}$ on the same side of $x_0$. Note that the four maximal arcs are all distinct. Thus $(x_0,\omega_0,x_1,t)$ is the endpoint of exactly four open arcs.
		
		It now follows that $\wsT^1 = \cup_{j\geq 1}\beta_j$, and $\wsT^2 = \cup_{j\geq 1}\partial\beta_j$.
	\end{proof}
	
	\section{Reconstructing $K$ from $\sT$}
	
	Note that by \Cref{lemma:number_of_obstacles} there are\footnote{Recall that in the case where $n=2$ there are at most 8 arcs.} $4n(n-1)$ arcs $\beta_j$ which are tangent to $\partial K$ but do not intersect the obstacles elsewhere. We re-order the maximal arcs so that for $1\leq j\leq 4n(n-1)$ the arcs $\beta_j$ are the aforementioned arcs arcs precisely.
	For each arc $\beta_j$, take a normal coordinate neighbourhood $U$ containing $V_j$, and denote the diffeomorphism $\psi_j:V_j\to\beta_j$. Let $\xi_{x}:\reals\to S$ be the smooth geodesic starting from $x\in V_j$ in the direction $\psi_j(x)$. Locally the smooth geodesics $\xi_x$ intersect pairwise at exactly one point within $U$. Therefore their envelope, $\Sigma(\beta_j)$, is a smooth curve defined locally in $U$.
	
	Denote by $x^*\in \partial S$ the clockwise terminal limit of $V_j$, and let \[\eta_j = \lim_{x\to x^*}\psi_j(x)\]
	Suppose that $\Sigma(\beta_j)$ is strictly convex, then we say that $\beta_k$ extends $\beta_j$ if $\partial\beta_j\cap\partial\beta_k = \{\eta_j\}$ and the closure of $\Sigma(\beta_j)\cup\Sigma(\beta_k)$ is a strictly convex curve in $S$. Otherwise we say that $\beta_j$ is non-extendible. Note that for each $\beta_j$ there are at most three arcs which could extend it, by \Cref{lemma:tangent_arcs}.
	\begin{proposition}\label{lemma:extendibility}
		Suppose that for every $(x,\omega,x_1,t)\in\beta_j$ the geodesic $\gamma_{(x,\omega)}$ is tangent at the first point of contact with $\partial K$. Then precisely one of the following holds:
		\begin{enumerate}
			\item $\beta_j$ is uniquely extendible, by some arc $\beta_k$ such that and $\Sigma(\beta_j)\cup\Sigma(\beta_k)$ is an arc in $\partial K$.
			\item $\beta_j$ is non-extendible with $1\leq j\leq 4n(n-1)$ and the conjugate arc $\beta_{j^*}$ is extendible, where \[\beta_{j^*} = \{(x_1,\omega,x,t):(x,\omega,x_1,t)\in\beta_j\}\]
		\end{enumerate}
	\end{proposition}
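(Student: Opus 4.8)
The plan is to decide extendibility entirely from the local configuration at the clockwise endpoint $\eta_j\in\wsT^2$ of $\beta_j$, using \Cref{lemma:tangent_arcs} to enumerate the arcs meeting there and then tracking, along each candidate, whether the grazed component is contacted first or last. First I would fix the envelope geometry: since $\beta_j$ is connected and lies in $\wsT^1$, all of its geodesics graze a single component, say $K_i$, and as the envelope of a family of geodesics tangent to a strictly convex curve is that curve itself, $\Sigma(\beta_j)$ is an open sub-arc of $\partial K_i$, hence strictly convex. Let $x'\in\partial K_i$ be the tangency point of the limiting geodesic at $\eta_j$. The hypothesis that every geodesic of $\beta_j$ grazes $K_i$ at its first contact, together with the general-position fact that a point of tangency must be the first or the last point of contact, forces the limiting bitangent at $\eta_j$ to meet $K_i$ first, at $x'$, and some second component $K_{i'}$ last, at a point $x''$.

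By \Cref{lemma:tangent_arcs} exactly four maximal arcs abut $\eta_j$: two obtained by sliding the tangency point along $\partial K_i$ through $x'$, and two by sliding it along $\partial K_{i'}$ through $x''$. The envelopes of the latter two lie on $\partial K_{i'}$, so their closures cannot join $\overline{\Sigma(\beta_j)}\subseteq\partial K_i$ into a single strictly convex curve; thus the \emph{only} possible extender is the remaining $\partial K_i$-tracing arc $\beta_k$, which already gives the uniqueness asserted in alternative~(1). It then remains to decide when $\beta_k$ genuinely extends $\beta_j$, that is, when $\partial\beta_j\cap\partial\beta_k=\{\eta_j\}$ holds together with strict convexity of $\overline{\Sigma(\beta_j)\cup\Sigma(\beta_k)}$.

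The crux is a local study of the \emph{directed} bitangent $\eta_j$, determining the orientation with which $\beta_k$ continues $\partial K_i$ at $x'$. Working in a normal chart about $x'$, where the geodesics tangent to $K_i$ become straight lines, I would relate the motion of the tangency point along $\partial K_i$ to the motion of the footpoint along $\partial S$ that parameterises each arc, and compare this for $\beta_j$ and $\beta_k$. If $\beta_k$ continues $\Sigma(\beta_j)$ in the same rotational sense, then $\eta_j$ is the anticlockwise endpoint of $\beta_k$, the condition $\partial\beta_j\cap\partial\beta_k=\{\eta_j\}$ holds, and $\overline{\Sigma(\beta_j)\cup\Sigma(\beta_k)}$ is a strictly convex arc of $\partial K$, placing us in case~(1). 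The reversal—in which $\beta_k$ retraces $\partial K_i$ so the union is not a simple convex curve, and the arc on the far side of $x'$ limits to the reversed bitangent $\eta_j^{*}$ rather than to $\eta_j$—is exactly the obstruction to extendibility. I would show this reversal can occur only when the limiting bitangent presents $K_i$ as its \emph{last} contact: for a positive-order first-contact arc the graze of $K_i$ is robustly the first contact (nothing precedes it), a property preserved under the perturbation across $x'$, so $\eta_j$ is pinned to the $K_i$-first directed bitangent and no reversal is possible. Hence non-extendibility forces $o(\beta_j)=0$ and, by \Cref{lemma:number_of_obstacles}, $1\le j\le 4n(n-1)$. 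Finally, since $\beta_{j^{*}}$ has the same envelope but the reversed footpoint parameterisation, its relevant endpoint is precisely $\eta_j^{*}$; re-running the local analysis on the reversed configuration turns the reversal into a consistent, same-sense continuation, so $\beta_{j^{*}}$ is extendible and case~(2) holds.

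I expect the main obstacle to be exactly this first-versus-last dichotomy and the attendant orientation bookkeeping at the directed bitangent: one must verify that the perturbed tangent line on the far side of $x'$ limits to $\eta_j$ precisely when its $K_i$-contact stays first, that it limits to $\eta_j^{*}$ otherwise, and that the surviving parameterisations glue into a genuinely convex, non-backtracking arc rather than merely a convex point set. Carrying this out cleanly relies on strict convexity of $K_i$ and $K_{i'}$ and on the general-position hypothesis to preclude any third component from interfering near $x'$, mirroring the Euclidean argument of \cite{math9192434}.
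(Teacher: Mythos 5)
Your plan correctly enumerates the four arcs abutting $\eta_j$ via \Cref{lemma:tangent_arcs} and correctly identifies the $\partial K_i$-tracing arc as the only viable extender, but it goes wrong at the central dichotomy. You assert that the hypothesis (every geodesic of $\beta_j$ grazes $K_i$ at its \emph{first} contact) together with general position \emph{forces} the limiting bitangent $\gamma_{\eta_j}$ to meet $K_i$ first. This is false: ``first contact'' is not a closed condition. The paper's proof splits on whether the tangency points $\widetilde{x}$ converge to the first tangency of $\gamma_{\eta_j}$ or not; in the second case the limit geodesic acquires a new tangential contact with another component \emph{before} $x'$ --- a contact every nearby $\gamma_{\psi_j(x)}$ misses entirely --- so that its first two contacts are both tangencies, whence by general position $\gamma_{\eta_j}$ is a free bitangent, $\beta_j$ has order $0$ (giving $1\le j\le 4n(n-1)$ via \Cref{lemma:number_of_obstacles}), $\beta_j$ is non-extendible, and the conjugate arc falls under the first case. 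That is exactly alternative (2) of the statement, and your opening claim rules it out. Your text is in fact internally inconsistent here: later you do allow the bitangent to present $K_i$ as its last contact, but you attribute non-extendibility to an ``orientation reversal'' of the $K_i$-tracing arc across $x'$, which is not the operative mechanism, and your robustness assertion (``the graze of $K_i$ is robustly the first contact\dots preserved under the perturbation across $x'$'') is precisely the unproved step on which everything hinges.

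There is a second, independent gap in your uniqueness argument. You dismiss the two arcs that slide the tangency along $\partial K_{i'}$ on the grounds that their envelopes lie on $\partial K_{i'}$. But $\Sigma$ is by definition the envelope of the \emph{initial} smooth legs $\xi_x$ issued from $\partial S$; when the tangency with $K_{i'}$ occurs at the \emph{last} point of contact, after one or more reflections, that envelope is a caustic which does not lie on $\partial K_{i'}$ at all, and nothing a priori prevents it from being a strictly convex curve whose closure abuts $\overline{\Sigma(\beta_j)}$. The paper excludes this possibility by constructing, via \Cref{thm:convexlemma}, convex fronts orthogonal to the same family of geodesics at both $\Sigma(\beta_k)$ and $\partial K_{i'}$, and deriving a contradiction from \Cref{thm:confrontorth}. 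Some argument of this kind is indispensable and is absent from your proposal.
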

	\begin{proof}
		Given $\beta_j$, and $\eta_j$ corresponding to the clockwise terminal limit $x^*$ of $V_j$, the geodesic $\gamma_{\eta_j}$ is tangent to $\partial K$ exactly twice, by \Cref{lemma:tangent_arcs}. For each $x\in V_j$, let $\widetilde{x}$ be the point of tangency with $\partial K$ of the geodesic generated by $\gamma_{\psi_j(x)}$. Let $\widetilde{x}^*$ be the first point of tangency of $\gamma_{\eta_j}$. Then by continuity of $\psi_j$, the tangency $\widetilde{x}^*$ is a limit of the points of tangency $\widetilde{x}$. There are two possible cases:
		
		\emph{Case 1:} The $\widetilde{x}$ and $\widetilde{x}^*$ remain on the same connected component $\partial K_i$. First note that by \Cref{lemma:tangent_arcs} there are three arcs $\beta_{j'}, \beta_{j''},$ and $\beta_{j'''}$ which could extend $\beta_j$. We suppose that $V_{j'}$ is on the opposite side of $x^*$ from $V_j$, and $V_{j''},V_{j'''}$ are either on the opposite or the same side as $V_{j}$. Now since the first point of contact of $\gamma_{\eta_j}$ remains on $\partial K_i$ it follows that $\beta_{j'}$ will generate geodesics which are also tangent to $\partial K_i$ at the first point of contact. Thus $\beta_{j'}$ extends $\beta_j$, and since both arcs generate geodesics which are tangent to $\partial K_i$, the envelopes $\Sigma(\beta_j)\cup\Sigma(\beta_{j'})$ form an arc in $\partial K_i$. Now to show that $\beta_{j'}$ is the unique arc which extends $\beta_j$, suppose that $\beta_k$ also extends $\beta_j$ where $k = j''$ or $k = j'''$. Note that the arc $\beta_k$ generates geodesics which are tangent to some other connected component $\partial K_{i'}$. Construct convex fronts for the envelope $\Sigma(\beta_k)$ and $\partial K_{i'}$ around the points of tangency from $V_k$, as in \Cref{thm:convexlemma}. Then every geodesic which hits one front orthogonally must hit the other front orthogonally as well, by construction. This gives a contradiction by \Cref{thm:confrontorth}. So $\Sigma(\beta_k)$ cannot be a convex arc in $S$. Thus the extension $\beta_{j'}$ is unique.
		
		\emph{Case 2:} The $\widetilde{x}$ and $\widetilde{x}^*$ do not remain on the same connected component $\partial K_i$. In this case the second point of tangency of $\gamma_{\eta_j}$ remains on the same connected component as the $\widetilde{x}$, but this would imply that the first two points of contact of $\gamma_{\eta_j}$ with $\partial K$ are points of tangency. Hence, by general position, $\gamma_{\eta_j}$ is a smooth geodesic, so $1\leq j\leq 4n(n-1)$. Note that this also implies that the $\widetilde{x}$ were the only points of contact between $\gamma_{\psi_j(x)}$ and $\partial K$. Now to extend $\beta_j$ we look at the conjugate arc $\beta_{j^*}$. By the first case, $\beta_{j^*}$ is extendible, but $\beta_j$ would not be extendible since the arcs which could extend it do not generate geodesics that are tangent to $\partial K_i$.
	\end{proof}
	
	We can now outline how to reconstruct the obstacles from the maximal arcs $\beta_j$. We begin by noting that for any of the first $4n(n-1)$ arcs, we have $\Sigma(\beta_j) = \Sigma(\beta_{j^*})$, where $\beta_{j^*}$ is the conjugate arc defined in \Cref{lemma:extendibility}. Now starting with any of these initial arcs $\beta_j$, we extend $\beta_j$ or $\beta_{j^*}$ according to \Cref{lemma:extendibility}. Each extension can be further extended by the same method. We continue countably many times, or until the envelope of the extending arc becomes acceptably small. Repeating this process for each of the initial $4n(n-1)$ arcs will recover $\partial K$ from the travelling times.

	\begin{acknowledgements}
		This research is supported by an Australian Government Research Training Program (RTP) Scholarship.
	\end{acknowledgements}

	\bibliography{bibliography}
\end{document}